\newcommand\la{\lambda}
\newcommand{\CC}{\ensuremath{\mathbb{C}}}
\newcommand{\RR}{\ensuremath{\mathbb{R}}}
\newcommand{\ZZ}{\ensuremath{\mathbb{Z}}}
\newcommand{\QQ}{\ensuremath{\mathbb{Q}}}
\newcommand{\NN}{\ensuremath{\mathbb{N}}}
\newcommand{\PP}{\ensuremath{\mathbb{P}}}
\newcommand{\HHH}{\ensuremath{\mathcal{H}}}
\newcommand{\ra}{\ensuremath{\rightarrow}}
\newcommand\Oh{{\mathcal O}}
\newcommand\sX{{\mathcal X}}
\newcommand{\bN}{{\mathbb N}}
\newcommand{\bZ}{{\mathbb Z}}
\newcommand{\bC}{{\mathbb C}}
\newcommand{\bQ}{{\mathbb Q}}
\newcommand{\cutoff}[1]{}
\newcommand{\Hy}{\mathcal{H}}
\newcommand{\ringO}{\mathcal{O}}
\newcommand{\one}{{\mathrm{Id}}}
\newcommand{\rationals}{{\mathbb{Q}}}
\newcommand*{\Homol}{\operatorname{H}}
\newcommand{\age}{{\rm shift}}
\newcommand{\codim}{{\rm codim}_\C}
\newcommand{\suchthat}{ \medspace | \medspace}
\newcommand{\PSO}{\mathrm{PSO}}
\newcommand{\PSU}{\mathrm{PSU}}
\newcommand{\PSL}{\mathrm{PSL}}
\renewcommand{\geq}{\geqslant}
\newcommand{\Afour}{\mathcal{A}_4}
\newcommand{\Sthree}{\mathcal{D}_3}
\newcommand{\Kleinfourgroup}{\mathcal{D}_2}
\newcommand{\Dtwon}{\mathcal{D}_{n}}
\theoremstyle{plain}
\newtheorem{thm}{\bfseries Theorem}
\newtheorem{theorem}[thm]{\bfseries Theorem}
\newtheorem{Lem}[thm]{\bfseries Lemma}
\newtheorem{lemma}[thm]{\bfseries Lemma}
\newtheorem{proposition}[thm]{\bfseries Proposition}
\newtheorem{corollary}[thm]{\bfseries Corollary}
\newtheorem{df}[thm]{\bfseries Definition}
\theoremstyle{remark}
\newtheorem{note}[thm]{\bfseries Note}
\newtheorem{remark}[thm]{\bfseries Remark}
\newtheorem*{ConditionA}{\bfseries Condition A}
\newtheorem*{ConditionB}{\bfseries Condition B}
\newtheorem{construction}[thm]{\bfseries Construction}
\newcommand{\cellCondition}{A}
 \newcommand{\fouredges}{\includegraphics{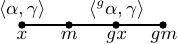}}
\newcommand{\circlegraph}{\includegraphics{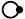}}
\newcommand{\edgegraph}{ \includegraphics{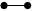}}
\newcommand{\graphFive}{  \includegraphics{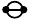}}
\newcommand{\graphTwo}{  \includegraphics{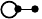}}
\title[On Ruan's Cohomological Crepant Resolution Conjecture 
for the complexified Bianchi orbifolds]{On Ruan's Cohomological
Crepant Resolution Conjecture \\ for the complexified Bianchi orbifolds}
\author{Fabio Perroni and Alexander D. Rahm}
\email{fperroni@units.it}
\urladdr{http://perroni.dmg.units.it/}
\address{University of Trieste, Department of Mathematics and Geosciences, Via A. Valerio 12/1, 34127 Trieste, Italia}
\email{Alexander.Rahm@uni.lu}
\urladdr{http://math.uni.lu/\char126rahm/}
\address{Universit\'e du Luxembourg, Mathematics Research Unit, 6, av. de la Fonte, L-4364 Esch-sur-Alzette, Luxembourg}
\date{\today}
\begin{document}
\begin{abstract}
We give formulae for the Chen--Ruan orbifold cohomology for the orbifolds given by a Bianchi group acting on complex hyperbolic 3-space. \\
The Bianchi groups are the arithmetic groups $\text{PSL}_2(\mathcal{O})$, where $\mathcal{O}$ is the ring of integers in an imaginary quadratic number field. 
The underlying real orbifolds which help us in our study, given by the action of a Bianchi group on real hyperbolic 3-space (which is a model for its classifying space for proper actions),
have applications in physics.

We then prove that, for any such orbifold, its Chen--Ruan orbifold cohomology ring 
is isomorphic to the usual cohomology ring of any crepant resolution of its coarse moduli space.
By vanishing of the quantum corrections, we show that this result fits in with Ruan's Cohomological Crepant Resolution Conjecture. 
\end{abstract}
\maketitle

\setcounter{secnumdepth}{3}
\setcounter{tocdepth}{2}

\section{Introduction}

Recently motivated by string theory in theoretical physics,
a stringy topology of orbifolds has been introduced in mathematics \cite{AdemLeidaRuan}.
Its essential innovations consist of Chen--Ruan orbifold cohomology \cite{ChenRuan}, \cite{Orbifold_Gromov-Witten_theory}
 and orbifold $K$--theory.
 They are of interest as topological quantum field theories~\cite{Uribe_et_al}.
Ruan's cohomological crepant resolution conjecture~\cite{RuanCCRC}
    associates Chen--Ruan orbifold cohomology with the ordinary cohomology of a resolution of the singularities of the coarse moduli space of the given orbifold.
We place ourselves where the conjecture is still open: in three complex dimensions and outside the global quotient case.
There, we are going to calculate the Chen--Ruan cohomology of an infinite family of orbifolds in this article;
and prove in Section~\ref{ringIsomorphism} that it is isomorphic as a ring to the cohomology of their crepant resolution of singularities.

Denote by $\rationals(\sqrt{-m})$, with $m$ a square-free positive integer, an imaginary quadratic number field, and by $\ringO_{-m}$ its ring of integers.
The \emph{\mbox{Bianchi} groups} are the projective special linear groups 
$\mathrm{PSL_2}(\ringO_{-m})$, throughout the paper 
we denote them by $\Gamma$.
The \mbox{Bianchi} groups may be considered as a key to the study of a larger class of groups, the \emph{Kleinian} groups, which date back to work of Henri Poincar\'e~\cite{Poincare}.
In fact, each non-co-compact arithmetic Kleinian group is commensurable with some \mbox{Bianchi} group~\cite{MaclachlanReid}.
A wealth of information on the \mbox{Bianchi} groups can be found in the monographs \cites{Fine, ElstrodtGrunewaldMennicke, MaclachlanReid}.
These groups act in a natural way on real hyperbolic three-space
$\Hy^3_\R$, which is isomorphic to the symmetric space associated to them.
This yields orbifolds $[\Hy^3_\R / \Gamma]$ that are studied in 
cosmology~\cite{AurichSteinerThen}.

The orbifold structure obtained in this way
is determined by a fundamental domain and its stabilizers and identifications.
The computation of this information has been implemented for all Bianchi groups~\cite{BianchiGP}. 

In order to obtain complex orbifolds, we consider the 
complex hyperbolic three-space $\Hy^3_\C$.
Then $\Hy^3_\R$  is naturally embedded into $\HHH_\CC^3$ as the fixed points set 
of the complex conjugation (Construction~\ref{complexification}). 
The action of a Bianchi group $\Gamma$ on $\Hy^3_\R$
extends to an action on $\Hy_\C^3$, in a natural way.
Since this action is properly discontinuous (Lemma~\ref{ActionProperlyDiscontinuous}),
we obtain a complex orbifold $[\Hy^3_\C/\Gamma]$,
which we call a \textit{complexified Bianchi orbifold}.

\subsection*{The vector space structure of Chen--Ruan orbifold cohomology} 

Let~$\Gamma$ be a discrete group acting \thinspace \emph{properly discontinuously}, hence  with finite stabilizers, by bi-holo\-morphisms on a
complex  manifold~$Y$. For any element $g \in \Gamma$, denote by $C_\Gamma(g)$ the centralizer of $g$ in~$\Gamma$. Denote by $Y^g$ the subspace of 
$Y$ consisting of the fixed points of $g$.

\begin{df}[\cite{ChenRuan}]\label{HCR}
Let $T \subset \Gamma$ be a set of representatives of the conjugacy classes of elements of finite order in~$\Gamma$.  
Then  the Chen--Ruan orbifold cohomology vector space of $[Y/ \Gamma]$ is:
$$ \Homol^*_{\rm CR}([Y / \Gamma]) := \bigoplus_{g \in T} \Homol^* \left( Y^g / C_\Gamma(g); \thinspace \rationals \right).$$
The grading on this vector space is reviewed in Equation~\eqref{grading} below.
\end{df}
This definition is slightly different from, but equivalent to, the original one in \cite{ChenRuan}.
We can verify this fact using arguments analogous to those used by Fantechi and G\"ottsche \cite{FantechiGoettsche}
in the case of a finite group~$\Gamma$ acting on~$Y$.
The additional argument needed when considering some element $g$ in~$\Gamma$ of infinite order is the following. 
As the action of~$\Gamma$ on $Y$ is properly discontinuous, $g$ does not admit any fixed point in $Y$. Thus, 
$ \Homol^* \left( Y^g / C_\Gamma(g); \thinspace \rationals \right) = \Homol^* \left( \emptyset; \thinspace \rationals \right) = 0. $
For another proof, see  \cite{AdemLeidaRuan}.

\subsection*{Statement of the results}
We  first compute the Chen--Ruan Orbifold Cohomology for
 the complex orbifolds $[\Hy^3_\C / \Gamma]$ in the following way.
%By Theorem~\ref{obstructionBundles}, 
%all obstruction bundles of the orbifold $[\Hy^3_\C / \Gamma]$ are of fiber dimension zero,
%therefore the Chen-Ruan cup product $\cup_{\rm CR}$ is determined
%by the natural maps 
%$\left( \Hy^3_\C \right)^g / C_\Gamma (g) \to \Hy^3_\C/\Gamma$
%induced by the inclusions $\left( \Hy^3_\C \right)^g \subset \Hy^3_\C$,
%for $g\in \Gamma$, and by the intersections of the fixed points sets.
%
%
In order to describe the vector space structure of 
$ \Homol^*_{\rm CR}([\Hy^3_\C / \Gamma])$, we reduce
our considerations on complex orbifolds to the easier case of real orbifolds.
This is achieved using Theorem~\ref{spine}
(Section~\ref{A spine for the complexified Bianchi orbifolds}),
which says that there is a $\Gamma$-equivariant homotopy equivalence 
between $\HHH_\CC^3$ and $\HHH_\RR^3$. 

As a result of Theorems~\ref{3-torsion quotients} and~\ref{2-torsion quotients},
 we can express the vector space structure of the Chen-Ruan 
 orbifold cohomology
 in terms of the numbers of conjugacy classes of finite subgroups
and the cohomology of the quotient space.
Actually, Theorems~\ref{3-torsion quotients} and~\ref{2-torsion quotients}
hold true for
(finite index subgroups in) Bianchi groups with units~$\{\pm 1\}$.
These latter groups are the groups $\PSL_2(\ringO)$, where 
$\ringO$ is a ring of integers of imaginary quadratic number fields  
such that it admits as only units $\{\pm 1\}$. The remaining cases are 
the Gaussian and Eisenstein integers, and we treat them
separately in Sections~\ref{GaussExample} and~\ref{EisensteinExample},
respectively.

More precisely, as a corollary to Theorems~\ref{3-torsion quotients} and~\ref{2-torsion quotients}, 
which we are going to prove in Section~\ref{Kraemer numbers and orbifold cohomology}, 
and using Theorem~\ref{spine}, we obtain:

\begin{corollary} \label{introduced result}
Let $\Gamma$ be a finite index subgroup in a Bianchi group with units $\{\pm 1 \}$.
Denote by $\lambda_{2n}$ the number of conjugacy classes of cyclic subgroups of order ${n}$
in $\Gamma$.
Denote by $\lambda_{2n}^*$ the number of conjugacy classes of those of them which are contained in a dihedral subgroup of order $2n$
in~$\Gamma$. Then,
$$ \Homol^d_{\rm CR}\left([\Hy^3_\C / \Gamma] \right) \cong 
\Homol^d\left(\Hy^3_\R / {\Gamma}; \thinspace \rationals \right) \oplus
\begin{cases}
\rationals^{\lambda_4 +2\lambda_6 -\lambda_6^*}, & d=2, \\
 \rationals^{\lambda_4-\lambda_4^* +2\lambda_6 -\lambda_6^*}, & d=3, \\
0, & \mathrm{otherwise}. \end{cases}$$ 
\end{corollary}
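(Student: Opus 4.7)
The plan is to unpack the displayed definition of $\Homol^*_{orb}$ and reorganise the sum over conjugacy classes of finite-order elements. Writing $\Gamma := \PSL_2(\ringO_{-m})$, the identity element of $\Gamma$ contributes exactly the summand $\Homol^*(\Hy_\C/\Gamma;\rationals)$. Under the units-$\{\pm 1\}$ hypothesis, every other finite-order element of $\Gamma$ has order 2 or 3, so the remaining twisted sectors split naturally into an order-2 part and an order-3 part.

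First I will pass from conjugacy classes of elements to conjugacy classes of cyclic subgroups. For order 2 the map is one-to-one, contributing $\lambda_4$ twisted sectors. For order 3, each cyclic subgroup $\langle g\rangle$ contributes either one or two sectors: the two generators $g$ and $g^{-1}$ are $\Gamma$-conjugate if and only if some element of $\Gamma$ inverts $\langle g\rangle$, which by the classification of finite subgroups of Bianchi groups happens exactly when $\langle g\rangle$ embeds into a $\Sthree$ (note that $\Afour$ contains no element inverting its order-3 subgroups, as the two conjugacy classes of 3-cycles in $\Afour$ are merely interchanged by an odd permutation). Hence the number of order-3 twisted sectors equals $2(\lambda_6-\lambda_6^*)+\lambda_6^* = 2\lambda_6-\lambda_6^*$.

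Second, for each twisted sector I will invoke Theorems~\ref{3-torsion quotients} and~\ref{2-torsion quotients} to compute $\Homol^*(\Hy_\C^g/C_\Gamma(g);\rationals)$. I expect these to yield that each quotient contributes one copy of $\rationals$ to $\Homol^2_{orb}$ in every sector, and one copy of $\rationals$ to $\Homol^3_{orb}$ in every sector except the $\lambda_4^*$ order-2 sectors for which $\langle g\rangle$ sits inside a Klein four-group $\Kleinfourgroup$; there, the extra commuting involution acts non-trivially on the fixed locus and kills the $\Homol^3_{orb}$-contribution. Collecting the counts yields the totals $\lambda_4+2\lambda_6-\lambda_6^*$ in $d=2$ and $\lambda_4-\lambda_4^*+2\lambda_6-\lambda_6^*$ in $d=3$, while the vanishing in other degrees follows from the real two-dimensionality of each fixed-locus quotient together with the age-shift placing its contributions at $d=2,3$.

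The main obstacle will lie in the second step: extracting from the torsion-quotient theorems precisely this pattern of $\Homol^3_{orb}$-contributions, in particular the subtle fact that a containing $\Kleinfourgroup$ destroys the $\Homol^3_{orb}$-contribution of an order-2 sector while a containing $\Sthree$ or $\Afour$ leaves the $\Homol^3_{orb}$-contribution of an order-3 sector intact. One must therefore track carefully the centraliser of each torsion element inside $\Gamma$, distinguishing between the contributions coming from subgroup containment versus those that merely produce element conjugations.
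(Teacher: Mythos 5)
Your proposal is correct and follows essentially the same route as the paper: the paper likewise counts $\lambda_4$ conjugacy classes of order-$2$ elements and $2\lambda_6-\lambda_6^*$ of order-$3$ elements via Corollary~\ref{number of elements modulo conjugacy}, applies Theorems~\ref{3-torsion quotients} and~\ref{2-torsion quotients} to identify each axis quotient as a circle or, in the $\lambda_4^*$ Klein four-group cases, an interval, and then shifts degrees by $2$ using the degree shifting number $1$ of Lemma~\ref{rotationAge}. The only step you should make explicit is the passage from the real quotients $\Hy_\R^\gamma /_{C_\Gamma(\gamma)}$ treated in those theorems to the complexified fixed loci appearing in the corollary, which the paper handles by observing that the centraliser acts on the complexified axis by the same reflections and translations, so the quotient becomes a stripe respectively a cylinder of the same homotopy type.
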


Together with the example computations for the Gaussian and Eisensteinian cases (Sections~\ref{GaussExample} and~\ref{EisensteinExample}),
we obtain $ \Homol^d_{\rm CR}\left([\Hy^3_\C / \Gamma] \right)$ for all Bianchi groups $\Gamma$.

The (co)homology of the quotient space $\Hy^3_\R  / \Gamma$
 has been  computed numerically for a large scope of Bianchi groups \cite{Vogtmann}, \cite{Scheutzow}, \cite{Higher_torsion};
 and bounds for its Betti numbers have been given in \cite{KraemerThesis}.
Kr\"amer \cite{Kraemer} has determined number-theoretic formulae
 for the numbers $\lambda_{2n}$ and $\lambda_{2n}^*$ of conjugacy classes of finite subgroups in the Bianchi groups.
Kr\"amer's formulae have been evaluated for hundreds of thousands of Bianchi groups \cite{accessing_Farrell_cohomology}, 
and these values are matching with the ones from the orbifold structure computations with \cite{BianchiGP}
in the cases where the latter are available.

Using the previous description of $\Homol^*_{\rm CR}([\Hy^3_\C / \Gamma])$ and Theorem~\ref{obstructionBundles}
we can compute the Chen-Ruan cup product as follows. By degree reasons,
the Chen-Ruan cup product $\alpha_g \cup_{\rm CR} \beta_h$ between cohomology classes of two twisted sectors 
is zero. On the other hand, if $\alpha_g \in \Homol^* \left( (\Hy^3_\C )^g / C_\Gamma (g)\right)$ and $\beta \in \Homol^* \left( \Hy^3_\C /\Gamma \right)$,
then $\alpha_g \cup_{\rm CR} \beta = \alpha_g \cup \imath_g^* \beta \in \Homol^* \left( (\Hy^3_\C )^g / C_\Gamma (g)\right)$,
where $\imath_g \colon ( \Hy^3_\C )^g / C_\Gamma (g) \to \Hy^3_\C/\Gamma$ is the natural map 
induced by the inclusion $( \Hy^3_\C )^g \subset \Hy^3_\C$ (notice that in this case the obstruction bundle 
has fiber dimension zero by Theorem~\ref{obstructionBundles}). 

Let us  consider now, 
for any complexified Bianchi orbifold $[\Hy^3_\C /\Gamma]$, 
its coarse moduli space
$\Hy^3_\C / \Gamma$.  It is a quasi-projective variety (\cite{Baily-Borel}) with Gorenstein singularities (Lemma~\ref{stab in SL3}).
Therefore, it admits a crepant resolution (see e.g. \cite{Roan}, \cite{ChenTseng}).
%Observe that, for any point  $x\in \Hy^3_\C$, and for any element
%$g$ of  the  stabilizer ${\rm Stab}(x)$ of $x$ in $\Gamma$, the differential of $g$ at $x$
%is a linear map $T_x g \colon T_x \Hy^3_\C \to T_x \Hy^3_\C$ with determinant 
% $1$. This is a consequence of the fact that 
%any element of~$\Gamma$ fixing a point inside the 
%real hyperbolic 3-space $\mathcal{H}^3_\R$ acts as a rotation of finite order 
%(see the second paragraph of 
%Section \ref{The conjugacy classes of finite order elements in the Bianchi groups}),
%and of the explicit description of these rotations given in the proof of Lemma 
%\ref{rotationAge}. From this it follows that $\Hy^3_\C / \Gamma$
%has a crepant resolution of the singularities $f\colon Y \ra \Hy^3_\C /\Gamma$.
%Indeed, the action of $\Gamma$ on $\Hy^3_\C$ being proper implies that
%${\rm Stab}(x)$ is finite (see Lemma~\ref{finiteSubgroups} for the isomorphism 
%classes of these stabilizers), and it is well-known that any quotient of $\C^3$
%by the linear action of a finite subgroup of ${\rm SL}_3(\C)$ has a crepant resolution.
Then we prove the following  result.
\begin{theorem}\label{mainthm}
Let $\Gamma$ be a Bianchi group  and let $[\Hy^3_\C / \Gamma]$ be the corresponding orbifold,
with coarse moduli space  $\Hy^3_\C / \Gamma$.
Let  $f\colon Y \ra \Hy^3_\C /\Gamma$ be a crepant resolution of $\Hy^3_\C /\Gamma$.
Then there  is an isomorphism as graded $\QQ$-algebras between the Chen--Ruan
cohomology ring of $[\Hy^3_\C / \Gamma]$ and the singular cohomology ring of $Y$:
$$
\left( H_{\rm CR}^*([\Hy^3_\C / \Gamma]) , \cup_{\rm CR} \right) \cong  
\left( H^*(Y) , \cup \right) \, .
$$
\end{theorem}
The proof of this theorem, which we shall give in Section~\ref{ringIsomorphism},
uses the McKay correspondence and our computations of the 
Chen--Ruan orbifold cohomology of the complexified Bianchi orbifolds.   
In Section~\ref{vanishing_section}, we compare this result with Ruan's Cohomological Crepant Resolution Conjecture
(\cite{RuanCCRC}, \cite{CoatesRuan}). Even though $\Hy^3_\C /\Gamma$ and $Y$ are not projective varieties, hence Ruan's conjecture
does not apply directly,  our results confirm the validity of this conjecture.

Finally, in 
Section~\ref{Sample orbifold cohomology computations for the Bianchi groups}, 
we illustrate our study with the computation of some explicit examples.

\subsection*{Acknowledgements} 
We would like to thank Martin Deraux for helpful and motivating discussions, which initiated the present work.
Warmest thanks go to Alessandro Chiodo and Yongbin Ruan for explanations on the cohomological crepant resolution conjecture,
to Jos\'e Bertin, Dmitry Kerner, Christian Lehn and \mbox{Sergei Yakovenko} for  answering questions on resolutions of singularities,
to John Ratcliffe for answers on orbifold topology, 
to Philippe Elbaz-Vincent, Graham Ellis, Stephen S. Gelbart, Thomas Schick and Gabor Wiese for support and encouragement of the second author during the years that this paper needed for maturing,
and to the referee of the previous version of the paper, for having motivated the research which led to a full proof of the conjecture in the cases under our study.

The first author was also supported  by 
PRIN 2015EYPTSB-PE1 "Geometria delle varieta' algebriche", 
FRA 2015 of the University of Trieste, the group GNSAGA of INDAM; the second one by Luxembourg's FNR grant INTER/DFG/FNR/12/10/COMFGREP.

\subsection*{Notation} We use the following symbols for the finite subgroups in $\mathrm{PSL}_2(\ringO)$:
for cyclic groups of order $n$, we write $\Z/n$;
for the Klein four-group \mbox{$\Z/2 \times \Z/2$}, we write $\Kleinfourgroup$;
for the dihedral group with six elements, we write $\Sthree$; and for the alternating group on four symbols, we write~$\Afour$. 

\section{The orbifold cohomology product} \label{The orbifold cohomology product}
In order to equip the orbifold cohomology vector space with a product structure,
called the Chen--Ruan product, we use the  orbifold complex  structure on $[Y / \Gamma]$. 
\\
Let $Y$ be a complex manifold of dimension $D$ with a properly discontinuous  action of a discrete group~$\Gamma$ by bi-holomorphisms.
For any $g \in \Gamma$ and $y \in Y^g$, we consider the eigenvalues $\lambda_1,\ldots,\lambda_D$ of the action of $g$ on the tangent space~$\mathrm{T}_{y}Y$.
As the action of $g$ on $\mathrm{T}_{y}Y$ is complex linear, its eigenvalues are roots of unity.
\begin{df}
Write $\lambda_j = e^{2\pi i r_j}$,
where $r_j$ is a rational number in the interval $[0,1[$. The \emph{degree shifting number} of $g$ in $y$ is the rational number $\age(g,y):=\sum_{j=1}^D r_j$.
\end{df}
The degree shifting number agrees with the original definition by Chen and Ruan
(see  \cite{FantechiGoettsche}). It is also called the fermionic shift number in \cite{Zaslow}.
The degree shifting number of an element $g$ is constant on a connected component of its fixed point set $Y^g$. 
For the groups under our consideration, $Y^g$ is connected,
so we can omit the argument $y$. 
Details for this and the explicit value of the degree shifting number are given in Lemma~\ref{rotationAge}.
Then we can define the graded vector space structure of the Chen--Ruan orbifold cohomology as
\begin{equation} \label{grading}
 \Homol^d_{\rm CR}([Y / \Gamma]) := \bigoplus_{g \in T} \Homol^{d-2 \thinspace \age(g)} \left( Y^g / C_\Gamma(g); \thinspace \rationals \right).
\end{equation}

Denote by $g$, $h$ two elements of finite order in~$\Gamma$, and by $Y^{g,h}$ their common fixed point set. Chen and Ruan construct a certain vector bundle on $Y^{g,h}$,
the \emph{obstruction bundle}. We denote by $c(g,h)$ its top Chern class.
In our cases, $Y^{g,h}$ is a connected manifold. In the general case, the fiber dimension of the obstruction bundle can vary between the connected components of $Y^{g,h}$, and $c(g,h)$ is the cohomology class restricting to the top Chern class of the obstruction bundle on each connected component.
The obstruction bundle is at the heart of the construction \cite{ChenRuan} of the Chen--Ruan orbifold cohomology product.
In \cite{FantechiGoettsche}, this product, when applied to a cohomology class associated to $Y^g$ and one associated to $Y^h$, is described as a push-forward of the cup product of these classes restricted to $Y^{g,h}$ and multiplied by $c(g,h)$.
\\
The following statement is made for global quotient orbifolds, but it is a local property, so we can apply it in our  case.
\begin{lemma}[Fantechi--G\"ottsche] \label{FantechiGoettscheFormula}
Let $Y^{g,h}$ be connected. Then the obstruction bundle on it is a vector bundle of fiber dimension
$$ \age(g) +\age(h) -\age(gh) -\codim\left(Y^{g,h} \subset Y^{gh}\right). $$
\end{lemma}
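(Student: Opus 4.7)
My plan is to reduce the claim to a purely local, representation-theoretic statement at a single point $y \in Y^{g,h}$, and then invoke the corresponding formula for global quotients by finite groups established in \cite{FantechiGoettsche}.

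First I would set up the local model. Since $g$ and $h$ are of finite order and fix $y$, they lie in the stabiliser $\Gamma_y$, which is finite by properness of the $\Gamma$-action. Hence $G := \langle g,h \rangle$ is a finite subgroup of $\Gamma_y$, acting $\C$-linearly on the tangent space $W := \mathrm{T}_y Y$. The fibre of the Chen--Ruan obstruction bundle at $y$ is, by construction, built from this representation of $G$ on $W$ alone, so it coincides with the fibre (at the origin) of the obstruction bundle for the global quotient orbifold $[W/G]$. Fantechi--G\"ottsche's computation for such global quotients therefore applies verbatim to our local model, yielding
$$ \dim_{\C} E_y \;=\; \age(g) +\age(h) +\age((gh)^{-1}) -\dim_{\C} W +\dim_{\C} W^G. $$

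Next I would reshape this expression into the form claimed in the lemma, using the elementary identity
$$ \age(s) +\age(s^{-1}) \;=\; \dim_{\C} W -\dim_{\C} W^s, $$
which holds for any finite-order element $s$ acting $\C$-linearly on $W$: each nontrivial eigenvalue $e^{2\pi i r}$ of $s$ pairs with the eigenvalue $e^{2\pi i(1-r)}$ of $s^{-1}$, and $r +(1-r) =1$. Applied to $s = gh$, this substitutes $\age((gh)^{-1}) = \dim_{\C} W -\dim_{\C} W^{gh} -\age(gh)$ into the previous line, and the formula collapses to
$$ \dim_{\C} E_y \;=\; \age(g) +\age(h) -\age(gh) -\dim_{\C} W^{gh} +\dim_{\C} W^G. $$
Identifying $W^G$ with the tangent space $\mathrm{T}_y Y^{g,h}$ and $W^{gh}$ with $\mathrm{T}_y Y^{gh}$, the last two terms combine into $-\codim(Y^{g,h} \subset Y^{gh})$, which is the stated formula.

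The main obstacle is not analytic but one of verification: I must confirm that Chen--Ruan's construction of the obstruction bundle carries over from the global-quotient setting of \cite{FantechiGoettsche} to our proper-action setting, and that its fibre at $y$ is indeed controlled by the local linear representation of $\langle g,h\rangle$ on $\mathrm{T}_y Y$. This is implicit in the local definition given in \cite{ChenRuan}; the connectedness hypothesis on $Y^{g,h}$ is then used only to ensure that the integer-valued fibre dimension is a well-defined global constant, so that one may speak of a definite rank.
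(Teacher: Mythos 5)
Your proposal is correct and follows essentially the same route as the paper, which likewise treats the obstruction bundle as a local object determined by the representation of the finite stabiliser on $\mathrm{T}_yY$ and then invokes Fantechi--G\"ottsche's global-quotient computation (proved there even without the connectedness hypothesis). Your extra step of converting their rank formula involving $\age((gh)^{-1})$, $\dim_\C W$ and $\dim_\C W^G$ into the stated form via $\age(s)+\age(s^{-1})=\dim_\C W-\dim_\C W^s$ is a harmless and correct elaboration of what the paper leaves to the citation.
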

In \cite{FantechiGoettsche}, a proof is given in the more general setting that $Y^{g,h}$ needs not be connected. 
Examples where the product structure is worked out in the non-global quotient case,
 are for instance given in \cite{ChenRuan}*{5.3}, \cite{Perroni} and~\cite{BoissiereMannPerroni}.

\subsection{Groups of hyperbolic motions}
A class of examples with complex structures admitting the grading \eqref{grading} 
is given by the discrete subgroups~$\Gamma$ of the orientation preserving isometry group PSL$_2(\mathbb{C})$ of real hyperbolic 3-space $\mathcal{H}^3_\R$.
The Kleinian model  of $\mathcal{H}^3_\R$ gives a natural identification of the orientation preserving  isometries of $\mathcal{H}^3_\R$ with matrices in $\PSO(1,3)$.
By the subgroup inclusion $\PSO(1,3) \hookrightarrow \PSU(1,3)$,
these matrices specify isometries of the complex hyperbolic space $\Hy^3_\C$. The details are as follows.

\begin{construction} \label{complexification}
 Given an orbifold $[\Hy^3_\R / \Gamma]$, we presently construct 
 the complexified orbifold $[\Hy^3_\C / \Gamma]$.
 Recall the Kleinian model for $\mathcal{H}^3_\R$ described in~\cite{ElstrodtGrunewaldMennicke}:
 For this, we take a basis $\{f_0, f_1, f_2, f_3 \}$ for $\R^4$, and rewrite $\R^4$ as
 $\tilde{E}_1 := \R f_0 \oplus \R f_1 \oplus \R f_2 \oplus \R f_3$.
 Then we define the quadratic form $q_1$ by
 $$q_1(y_0f_0 + y_1f_1+ y_2f_2 +y_3f_3) = y_0^2 - y_1^2 - y_2^2 -y_3^2.$$
  We consider the real projective 3-space
  $\mathbb{P}\tilde{E}_1 = (\tilde{E}_1 \setminus \{0\})/\R^*,$
  where $\R^*$ stands for the multiplicative group $\R \setminus \{0\}$.
  The set underlying the Kleinian model is then
  $${\mathbb{K}} := \{ [y_0: y_1: y_2: y_3] \in \mathbb{P}\tilde{E}_1 \medspace | \medspace q_1(y_0, y_1, y_2, y_3) > 0 \}.$$
  Once that $\mathbb{K}$ is equipped with the hyperbolic metric, its group of orientation preserving isometries is PSO$_4(q_1, \R) =:$ PSO$(1,3)$.
  The isomorphism of $\mathbb{K}$ to the upper-half space model of $\mathcal{H}^3_\R$ yields an isomorphism between the groups of orientation preserving isometries, 
  PSO$(1,3) \cong$ PSL$_2(\C)$. This is how we include $\Gamma$ into PSO$(1,3)$.
  \\
  Now we consider the complex Euclidean 4-space $\tilde{E}_1 \otimes_\R \C := \C f_0 \oplus \C f_1 \oplus \C f_2 \oplus \C f_3,$
the  complex projective 3-space
  $\mathbb{P}(\tilde{E}_1 \otimes_\R \C ) = (\tilde{E}_1 \otimes_\R \C \setminus \{0\})/\C^*,$
  and obtain a model $${\mathbb{K}}_\C := \{ [z_0: z_1: z_2: z_3] \in \mathbb{P}(\tilde{E}_1 \otimes_\R \C ) \medspace | \medspace q_1(|z_0|, |z_1|, |z_2|, |z_3|) > 0 \}$$
  for the complex hyperbolic 3-space $\Hy_\C^3$, where $|z|$ denotes the modulus of the complex number $z$.
  The latter model admits PSU$(1,3)$ as its group of orientation preserving isometries, with a natural inclusion of PSO$(1,3)$.
  \\
  This is how we obtain our action of $\Gamma$ on $\Hy_\C^3$. In the remainder of this section we show some properties of 
  this action that will be used in the following.
\end{construction}

\vbox{
\begin{lemma} \label{ActionProperlyDiscontinuous}
The action of $\Gamma$ on $\Hy^3_\C$ just defined is properly discontinuous.  
\end{lemma}
\begin{proof}
This fact should be well known and can be proved using the existence of Dirichlet fundamental domains 
for the $\Gamma$-action on $\Hy^3_\C$ \cite{Goldman}*{Section 9.3}.
We include here for completeness a self-contained proof which relies on the  fact that the $\Gamma$-action on  $\Hy^3_\R$ is properly discontinuous
\cite[Theorems 1.2, p. 34, and 1.1 p. 311]{ElstrodtGrunewaldMennicke}.

Let $\{ \gamma_n\}_{n\geq 1}$ be a sequence of elements of $\Gamma$ and let 
$x\in \Hy^3_\C$ be a point, such that $\{ \gamma_n \cdot x \}_{n\geq 1}$ is infinite.
We show that $\{ \gamma_n \cdot x \}_{n\geq 1}$ has no accumulation point in $\Hy^3_\C$.
To this aim, assume by contradiction that 
$x_\infty \in \Hy^3_\C$ is an  accumulation point for $\{ \gamma_n \cdot x \}_{n\geq 1}$.
Let   $p\colon \Hy^3_\C \to \Hy^3_\R$ be the projection  defined in the proof of Theorem~\ref{spine} and consider
$p(x_\infty)$,  $\{ p(\gamma_n \cdot x) \}_{n\geq 1}$. Notice that since $p$ is $\Gamma$-equivariant, $p(\gamma_n \cdot x)= \gamma_n \cdot p(x)$,
and $\{ \gamma_n \cdot p(x) \}_{n\geq 1}$ is infinite, because $\Gamma$ acts properly discontinuously on  $\Hy^3_\R$.
It follows that $p(x_\infty)$ is an accumulation point for $\{ \gamma_n \cdot p(x)\}_{n\geq 1}$, hence a contradiction.
\end{proof}
}

\begin{lemma} \label{ProperTwistedSectors}
For any $g\in \Gamma$, the natural map 
$\left( \Hy^3_\C \right)^g / C_\Gamma (g) \to \Hy^3_\C/\Gamma$
induced by the inclusion $\left( \Hy^3_\C \right)^g \subset \Hy^3_\C$
is proper.
\end{lemma}
\begin{proof}
The proof is given in two steps, in the first one we show that the  map has finite fibres.
Since this fact holds true in general, for any discrete group $\Gamma$ acting properly discontinuously 
by bi-holomorphisms on a complex manifold $M$, we prove it in this generality. 
Let us denote by $f\colon M^g/C_\Gamma (g) \to M/\Gamma$ the natural map
induced by the inclusion $M^g \subset M$.
For any $x \in M^g$, let $[x] \in M^g/C_\Gamma (g)$
be its equivalence class. Then
$$
f^{-1}(f([x])) = \{ y \in M^g \, | \, y\in \Gamma \cdot x  \}/C_\Gamma (g) \, ,
$$   
where $\Gamma \cdot x$ denotes the orbit of $x$. Notice that for any $h\in \Gamma$, if $h\cdot x \in M^g$,
then $g\in {\rm Stab}(h\cdot x) = h{\rm Stab}(x)h^{-1}$, and so there exists a unique $g_h \in {\rm Stab}(x)$
such that $hg_h h^{-1} =g$, $g_h =  h^{-1}  g h$. Here for any element $y$,  ${\rm Stab}(y)$ denotes its stabilizer.
Furthermore, if $h_1 , h_2 \in \Gamma$ are such that
$h_1^{-1}  g h_1 = h^{-1}_2  g h_2$, then $g=h_2h_1^{-1} g h_1h_2^{-1} = (h_2h_1^{-1} )g (h_2h_1^{-1} )^{-1}$.
Therefore, $h_2h_1^{-1} \in C_\Gamma (g)$ and hence $h_2 \in C_\Gamma (g) \cdot h_1$.
This implies that, if we define
$$
\Gamma_{x,g} := \{ h\in \Gamma \, | \, h\cdot x \in M^g \} \, ,
$$
then the map $f_{x,g} \colon \Gamma_{x,g} \to {\rm Stab}(x)$, $h \mapsto g_h = h^{-1}g h$,
descends to an injective map $\Gamma_{x,g} / C_\Gamma (g) \to {\rm Stab}(x)$.
The claim now follows from the fact that ${\rm Stab}(x)$ is finite and $\Gamma_{x,g} / C_\Gamma (g)$
is bijective to $f^{-1}(f([x]))$.

In the second step of the proof, $M= \Hy^3_\C $ and $f\colon \left( \Hy^3_\C \right)^g / C_\Gamma (g) \to \Hy^3_\C/\Gamma$.
Let $d\colon \Hy^3_\C \times \Hy^3_\C \to \mathbb{R}$ be the distance function
induced by the Bergman metric, that is the positive definite Hermitian form 
$\sum_{\alpha , {\beta}}^3 \frac{\partial^2 \log \mathcal{K}}{\partial z_\alpha \partial \bar{z}_{\beta}}
{\rm d}z_\alpha {\rm d} \bar{z}_\beta$ on $\Hy^3_\C$,
where $\mathcal{K}$ is the Bergman kernel of $\Hy^3_\C$
(see \cite[p. 145]{MorrowKodaira}). By restriction 
$d$ induces a distance function on $\left( \Hy^3_\C \right)^g$. Moreover, defining for any $[x], [y] \in \Hy^3_\C/\Gamma$
(respectively $[x], [y] \in \left( \Hy^3_\C \right)^g / C_\Gamma (g)$),
$$
\tilde{d} ([x], [y] ) := {\rm Inf} \{ d(\xi , \eta ) \, | \, \xi \in \Gamma \cdot x \, , \, \eta \in \Gamma \cdot y \} \, ,
$$
we have a distance function on $\Hy^3_\C/\Gamma$ (on $\left( \Hy^3_\C \right)^g / C_\Gamma (g)$ respectively,
where $\tilde{d}$ is defined accordingly).
By elementary topology, for metric spaces, a subspace $K$ is compact
if and only if  any infinite subset $Z\subset K$ has an accumulation point in $K$. 
So, let $K \subset \Hy^3_\C/\Gamma$ be a compact subspace. To show that $f^{-1}(K)$ is  compact,
let $Z \subset f^{-1}(K)$ be an infinite subset. Since $f$ has finite fibres, $f(Z)$ is infinite, so it has an
accumulation point, say $[x_0] \in K$.  
Notice that $f^{-1}([x_0]) \not= \emptyset$, since ${\rm Im}(f)$ is closed.
To see this, let $[x] \not\in {\rm Im}(f)$. Then $\Gamma \cdot x \cap \left( \Hy^3_\C \right)^g = \emptyset$,
in other words, for any $y \in \Gamma \cdot x$, $g\not\in {\rm Stab}(y)$. Since the action is properly discontinuous,
any $y \in \Gamma \cdot x$ has a neighborhood $U$ such that $\gamma \cdot U \cap U \not= \emptyset$,
if and only if $\gamma \in {\rm Stab}(y)$, for any $\gamma \in \Gamma$. In particular, the stabilizer of any 
point in  $U$ is contained in ${\rm Stab}(y)$, and hence $\Gamma \cdot U \cap \left( \Hy^3_\C \right)^g = \emptyset$. So, $\Gamma \cdot U$ gives an open neighbourhood of $[x]$
which has empty intersection with ${\rm Im}(f)$.
To finish the proof of the lemma, we observe that,
if $[x_0]\in K$ is an accumulation point for $f(Z)$, and
$f^{-1}([x_0]) \not= \emptyset$, then there exists 
$[y_0] \in f^{-1}([x_0]) \subset f^{-1}(K)$ which is an accumulation point for $Z$,
since $f$ has finite fibres.
\end{proof}

As we will see in Section \ref{A spine for the complexified Bianchi orbifolds}
(Remark \ref{rem spine}), if $g\in {\rm PSL}_2(\C) \cong {\rm PSO}(1,3)$
is different from $\pm 1$, and $(\Hy^3_\C)^g \not= \emptyset$, then 
$(\Hy^3_\C)^g \cap \Hy^3_\R \not= \emptyset$. Therefore, 
$g$ is an elliptic element of ${\rm PSL}_2(\C)$ 
(\cite[Prop. 1.4, p. 34]{ElstrodtGrunewaldMennicke}), in particular it has 
exactly two fixed points on $\partial \Hy^3_\R  \cong \PP^1_\C$,
and the geodesic line in $\Hy^3_\R$ joining these two points is contained in 
 $(\Hy^3_\C)^g$. Moreover, $g$ acts as a rotation around this geodesic line.
For this reason, we call any such element $g$ a \textbf{non-trivial rotation}
of $\Hy^3_\C$.

\vbox{
\begin{lemma} \label{rotationAge}
The degree shifting number of any non-trivial rotation of $\Hy^3_\C$ on its fixed points set is $1$.  
\end{lemma}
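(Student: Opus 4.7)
The plan is to diagonalise the differential of $g$ at a point of its fixed locus, read off the eigenvalue exponents $r_j$, and sum them. The key input I will exploit is that the inclusion $\PSO(3,1) \hookrightarrow \PSU(3,1)$ realises $\Hy^3_\R$ as a totally geodesic, totally real submanifold of $\Hy^3_\C$.

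First I will identify the fixed-point set of a non-trivial $g \in \PSO(3,1)$ of finite order. Such a $g$ is elliptic and fixes a geodesic axis in $\Hy^3_\R$; its fixed locus in $\Hy^3_\C$ will be the complexification of that axis, namely the connected complex geodesic obtained as the intersection of $\Hy^3_\C$ with the complex $1$-eigenspace of $g$ acting on $\C^{3,1}$. In particular $Y^g$ is connected, so the degree shifting number is a single well-defined rational number, and it suffices to compute it at one fixed point.

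Next I will compute the differential of $g$ at a fixed point $y$ lying in the real slice. Because $g$ acts on $\Hy^3_\C$ by biholomorphisms, its differential is $\C$-linear; because $\Hy^3_\R$ is totally real through $y$, there is a canonical identification $\mathrm{T}_y \Hy^3_\C \cong \mathrm{T}_y \Hy^3_\R \otimes_\R \C$. On the real three-dimensional tangent space $\mathrm{T}_y \Hy^3_\R$, the rotation $g$ fixes the direction of the axis and rotates the orthogonal real $2$-plane by some angle $\theta \neq 0$, so the eigenvalues of the $\C$-linear extension on $\mathrm{T}_y \Hy^3_\C$ are exactly $1$, $e^{i\theta}$ and $e^{-i\theta}$.

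Finally, writing $\theta = 2\pi k/n$ with $1 \leq k \leq n-1$, where $n$ is the order of $g$, the exponents $r_j \in [0,1)$ are $0$, $k/n$, and $(n-k)/n$, which sum to $1$. The main point to pin down carefully is the identification of the complex tangent space as the complexification of the real one; once that is in hand, $\age(g) = 1$ is forced by the elementary principle that the non-real eigenvalues of a real matrix come in complex conjugate pairs whose fractional-part exponents add to $1$.
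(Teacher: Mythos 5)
Your proof is correct and follows essentially the same route as the paper: both identify the fixed locus as the complexified geodesic axis and compute the eigenvalues $1, e^{i\theta}, e^{-i\theta}$ of the holomorphic differential at a fixed point, so that the fractional exponents of the conjugate pair sum to $1$. Your use of the totally real slice to identify $\mathrm{T}_y\Hy^3_\C$ with $\mathrm{T}_y\Hy^3_\R\otimes_\R\C$ is just a slightly more explicit justification of the step the paper phrases as the tangent action being ``again a rotation of angle $\theta$''.
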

\begin{proof}
For any rotation $\hat{\theta}$ of angle $\theta$ around a geodesic line in $\mathcal{H}^3_\R$, there is a basis for the construction of the Kleinian model
 such that the matrix of $\hat{\theta}$ takes the following shape (\cite[Prop. 1.13, p. 40]{ElstrodtGrunewaldMennicke}),
$$\left( \begin{array}{cccc}
1 & 0 & 0 & 0 \\
0 & 1 & 0 & 0 \\
0 & 0 & \cos \theta & -\sin \theta  \\
0 & 0 & \sin \theta & \cos \theta 
  \end{array} \right)
\in \PSO(1,3).$$
This matrix, considered as an element of $\PSU(1,3)$, 
performs a rotation of angle $\theta$ around the ``complexified geodesic line'' with respect to the inclusion $\mathcal{H}^3_\R \hookrightarrow \mathcal{H}^3_\C$.
The fixed points of this rotation are exactly the points $x$ lying on this complexified geodesic line, 
and the action on their tangent space T$_x\Hy^3_\C \cong \C^3$ is again a rotation of angle $\theta$.
Hence we can choose a basis of this tangent space such that this rotation is expressed by the matrix
$$\left( \begin{array}{ccc}
1 & 0 & 0\\
0&e^{i\theta} & 0  \\
0& 0 & e^{-i\theta} 
  \end{array} \right)
\in {\rm SL}_3(\C).$$
Therefore, the degree shifting number of the rotation $\hat{\theta}$ at $x$ is 1.

Let now $x \in (\Hy^3_\C)^g \setminus \Hy^3_\R$. From Remark \ref{rem spine} it follows that 
$x$ and $p ( x ) \in (\Hy^3_\R)^g$ belongs to the same connected component
of $(\Hy^3_\C)^g$, where $p\colon \Hy^3_\C \to \Hy^3_\R$ is the projection defined in the proof of Theorem~\ref{spine}.
Therefore,  $\age(g,x) = \age(g,p( x )) =1$.
\end{proof}
}

\begin{lemma}\label{stab in SL3}
Let $\Gamma$ be a Bianchi group  acting on $\Hy^3_\C$ as in Construction
\ref{complexification}. Then, for any point $x\in \Hy^3_\C$, 
the stabilizer ${\rm Stab}_\Gamma (x)$
of $x$ in $\Gamma$ is a finite group isomorphic to one of the following groups:
the cyclic group of order $1, 2$ or $3$;  the  dihedral group $\Kleinfourgroup$
 of order $4$;  the dihedral group $\Sthree$ of order $6$;
the alternating group  $\Afour$ of order $12$.

Furthermore,  the map ${\rm Stab}_\Gamma ( x ) \to GL_3(\C)$ given by 
$\gamma \mapsto T_x \gamma$, where $T_x \gamma$ is the differential of 
$\gamma$ at $x$,
is an injective group homomorphism, whose image is contained in $SL_3(\C)$
and it is conjugate to one of the following subgroups $G$ of $SL_3(\C)$:
\begin{itemize}
\item[1)] if ${\rm Stab}_\Gamma (x)$ is cyclic of order $n=1, 2$ or $3$, then 
$G= \left\langle \begin{pmatrix}\omega & 0 & 0 \\ 0 &\omega^{-1} & 0 \\ 0 & 0& 1\end{pmatrix}\right\rangle$,
where $\omega \in \C^*$ is a primitive $n$-th root of $1$.
\item[2)] If ${\rm Stab}_\Gamma (x) \cong \Kleinfourgroup$, 
then $G= \left\langle \begin{pmatrix} -1 & 0 & 0 \\ 0 & -1 & 0 \\ 0 & 0 & 1 \end{pmatrix},
\begin{pmatrix} -1 & 0 & 0 \\ 0 & 1 & 0 \\ 0 & 0 & -1 \end{pmatrix} \right\rangle$.
\item[3)] If ${\rm Stab}_\Gamma (x) \cong \Sthree$, then 
$G= \left\langle \begin{pmatrix} \omega & 0 & 0 \\ 0 & \omega^2 & 0 \\ 0 & 0 & 1 \end{pmatrix},
\begin{pmatrix} 0 & 1 & 0 \\ 1 & 0 & 0 \\ 0 & 0 & -1 \end{pmatrix} \right\rangle$\, ,
where $\omega \in \C^*$ is a primitive third root of $1$.
\item[4)] If ${\rm Stab}_\Gamma (x) \cong \Afour$, then 
$G= \left\langle \begin{pmatrix} -1 & 0 & 0 \\ 0 & -1 & 0 \\ 0 & 0 & 1 \end{pmatrix},
\begin{pmatrix} -1 & 0 & 0 \\ 0 & 1 & 0 \\ 0 & 0 & -1 \end{pmatrix}, 
\begin{pmatrix}0&1&0\\0&0&1\\1&0&0\end{pmatrix}\right\rangle$.
\end{itemize}
\end{lemma}
\begin{proof}
Since the action of $\Gamma$ on $\Hy^3_\C$ is properly discontinuous  
(Lemma~\ref{ActionProperlyDiscontinuous}),  ${\rm Stab}_\Gamma ( x )$ is finite.
The first part of the lemma follows now from the 
 classification of the finite subgroups of $\Gamma$
(see Lemma~\ref{finiteSubgroups}).

From the proof of Lemma~\ref{rotationAge} we deduce that, if $\gamma 
\in {\rm Stab}_\Gamma ( x ) \setminus \{ 1 \}$, then $T_x \gamma$ is different from the 
identity and  $\det ( T_x \gamma ) =1$, hence we obtain an injective group homomorphism
${\rm Stab}_\Gamma ( x ) \to SL_3 ( \C)$.
The description of the images of these morphisms follows from elementary 
representation theory, as we briefly explain. 

The case 1) is clear.
In case 2), $G$ is generated by two matrices $A, B \in SL_3(\C)$, such that
$A^2=B^2=I_3$, and $A\cdot B = B\cdot A$. From Schur's lemma it follows 
that $A$ and $B$ are simultaneously diagonalisable, hence there exists a basis of $\C^3$
such that $A$ and $B$ are diagonal of the given form. 

In case 3), $G$ is generated by two matrices, $A, B$, such that 
$A^3=B^2=(A\cdot B)^2=I_3$. Let $\{ u,v,w\}$ be a basis of $\C^3$, such that
$Au=\omega u$,   $Av=\omega^2 v$,  $Aw=w$, where $\omega \in \C^*$, $\omega^3 =1$, 
$\omega \not= 1$. From the relation $A\cdot B=B\cdot A^2$, we deduce that
$Bw=\pm w$, $Bu=av$ and $Bv=bu$, for some $a,b \in \C^*$. Since $B^2=I_3$,
it follows that $ab=1$. Hence, in the basis $\{ \frac{1}{a}u,v,w\}$ of $\C^3$,
the matrices $A$ and $B$ have the desired form.

Finally, in case 4), we use the fact that $\Afour$ has four irreducible representations 
(see e.g. \cite[Thm. 7, p. 19]{Serre}), three  of dimension one that are induced 
by the representations of $\Afour / H \cong \ZZ/3\ZZ$, where $H$ is the normal subgroup
of $\Afour$ consisting of the permutations of order two. The remaining 
irreducible representation of $\Afour$ is of dimension three. Therefore,
up to conjugation, there is only one injective group homomorphism 
$\Afour \to SL_3(\C)$. The result follows  from the fact that the three given 
matrices generate a subgroup of $SL_3(\C)$ isomorphic to $\Afour$.
\end{proof}

\vbox{
\begin{thm} \label{obstructionBundles}
 Let~$\Gamma$ be a group generated by translations and rotations of $\Hy^3_\C$. Then all obstruction bundles of the orbifold $[\Hy^3_\C / \Gamma]$ 
 are of fiber dimension zero, except in the case corresponding to two elements $b, c\in \Gamma \setminus \{ 1\}$ with $c=b^k$ and $b c \not=1$.
In this last case the obstruction bundle   is of fiber dimension $1$ and it is trivial.
\end{thm}
\begin{proof}
Non-trivial obstruction bundles can only appear for two elements of~$\Gamma$ with common fixed points.
The translations of $\Hy^3_\C$ have their fixed points on the boundary and not in $\Hy^3_\C$.
So let $b$ and $c$ be non-trivial hyperbolic rotations around distinct axes intersecting in the point $x \in \Hy^3_\C =:Y$. Then $bc$ is again a hyperbolic rotation around a third distinct axis passing through $x$. 
Obviously, these rotation axes constitute the fixed point sets $Y^b$, $Y^c$ and $Y^{bc}$. Hence the only fixed point of the group generated by~$b$ and~$c$ is~$x$.
Now Lemma~\ref{FantechiGoettscheFormula} yields the following fiber dimension for the obstruction bundle on $Y^{b,c}$:
$$ {\age}(b) + {\age}(c) -{\age}(bc) -\codim\left(Y^{b,c} \subset Y^{bc} \right).$$ 
After computing degree shifting numbers using Lemma~\ref{rotationAge}, we see that this fiber dimension is zero.

Let now $b$ and $c$ be non-trivial hyperbolic rotations around the same axis $Y^b=Y^c$. Then $c=b^k$ and either  $bc=1$, or $bc\not=1$.
As before we conclude  that   the fiber dimension of the obstruction bundle is $0$ in the first case, and $1$ in the second.
However, if $bc\not= 1$, then $Y^{b,c} = Y^{bc}$, which is a non-compact Riemann surface contained in $\Hy^3_\C$, hence
the obstruction bundle is trivial in this case \cite[Thm. 30.3, p. 229]{Forster}. 

Finally, if $b=1$, or $c=1$, the claim follows from  Lemma~\ref{FantechiGoettscheFormula} and Lemma~\ref{rotationAge} as before.
\end{proof}
}
%Hence, when the obstruction bundle is trivial, its top Chern class is the neutral element of the cohomological cup product.
%By Fantechi--G\"ottsche's description, the Chen--Ruan orbifold cohomology product is then a push-forward of the cup 
%product of the cohomology classes restricted to the intersection of the fixed points sets.

\section{The centralizers of finite cyclic subgroups in the Bianchi groups}
 \label{The conjugacy classes of finite order elements in the Bianchi groups}

In this section, as well as Theorems~\ref{3-torsion quotients} and~\ref{2-torsion quotients},
 we will reduce all our considerations to the action on real hyperbolic 3-space $\mathcal{H}^3_\R$.
 For the latter action, there are Poincar\'e's formulas~\cite{Poincare} on the upper-half space model, which extend the M\"obius transformations from the hyperbolic plane. 
Let $\Gamma$ be a finite index subgroup in a Bianchi group $\text{PSL}_2(\mathcal{O}_{-m})$.
In 1892, Luigi \mbox{Bianchi}~\cite{Bianchi} computed fundamental domains for some of the full Bianchi groups.
Such a fundamental domain has the shape of a hyperbolic polyhedron (up to a missing vertex at certain cusps,
 which represent the ideal classes of $\ringO_{-m}$), so we will call it the \emph{\mbox{Bianchi} fundamental polyhedron}.
We use the \mbox{Bianchi} fundamental polyhedron to induce a $\Gamma$-equivariant cell structure on $\mathcal{H}^3_\R$,
namely we start with this polyhedron as a $3$-cell, record its polyhedral facets, edges and vertices, and tessellate out $\mathcal{H}^3_\R$ with their $\Gamma$-images.

It is well-known \cite{binaereFormenMathAnn9} (cf. also \cite[Prop. 1.13, p. 40]{ElstrodtGrunewaldMennicke})
 that any element of~$\Gamma$ fixing a point inside real hyperbolic 3-space $\mathcal{H}^3_\R$ acts as a rotation of finite order. 
And the rotation axis does not pass through the interior of the Bianchi fundamental polyhedron, because the interior of the latter contains only one point on each $\Gamma$-orbit.
Therefore, we can easily refine our $\Gamma$-equivariant cell structure such that the stabilizer in~$\Gamma$ of any cell $\sigma$ fixes $\sigma$ point-wise:
We just have to subdivide the facets and edges of the Bianchi fundamental polyhedron by their symmetries 
(and then again spread out the subdivided cell structure on $\mathcal{H}^3_\R$ using the $\Gamma$-action).
This has been implemented in practice~\cite{Rahm_homological_torsion}, and we shall denote $\mathcal{H}^3_\R$ with this refined cell structure by $Z$.
 
\begin{df}
Let~$\ell$ be a prime number. The \emph{$\ell$--torsion sub-complex} is the sub-complex of $Z$ consisting of all the cells
 which have stabilizers in~$\Gamma$ containing elements of order $\ell$.
\end{df}
For $\ell$ being one of the two occurring primes $2$ and $3$, the orbit space of this sub-complex is a finite graph,
 because the cells of dimension greater \mbox{than 1} are trivially stabilized in the refined cellular complex.
We reduce this sub-complex with the following procedure, motivated in~\cite{accessing_Farrell_cohomology}.

\begin{ConditionA} \label{cell condition}
In the $\ell$--torsion sub-complex, let $\sigma$ be a cell of dimension $n-1$, lying in the boundary of precisely two $n$--cells $\tau_1$ and~$\tau_2$,
 the latter cells representing two different orbits.
Assume further that no higher-dimensional cells of the $\ell$--torsion sub-complex touch $\sigma$;
and that the $n$--cell stabilizers admit an isomorphism
$\Gamma_{\tau_1} \cong \Gamma_{\tau_2}$. 
\end{ConditionA}

Where this condition is fulfilled in the $\ell$--torsion sub-complex,
 we merge the cells $\tau_1$ and $\tau_2$ along~$\sigma$ and do so for their entire orbits,
 if and only if they meet the following additional condition. 
We never merge two cells the interior of which contains two points on the same orbit.

\begin{ConditionB} 
The inclusion $ \Gamma_{\tau_1} \subset \Gamma_\sigma$ 
induces an isomorphism on group homology with $\Z/\ell$--coefficients under the trivial action.
\end{ConditionB}

\emph{The reduced $\ell$--torsion sub-complex is the $\Gamma$--complex obtained by orbit-wise merging two $n$--cells of the
 $\ell$--torsion sub-complex satisfying conditions~$\cellCondition$ and~$B$.}

We use the following classification of Felix Klein \cite{binaereFormenMathAnn9}.

\begin{Lem}[Klein] \label{finiteSubgroups}
The finite subgroups in $\mathrm{PSL}_2(\ringO)$ are exclusively of isomorphism types the cyclic groups of orders $1$, $2$ and $3$,
the dihedral groups $\Kleinfourgroup$ and $\Sthree$ (isomorphic to the Klein four-group \mbox{$\Z/2 \times \Z/2$},
 respectively to the symmetric group on three symbols) and the alternating group~$\Afour$. 
\end{Lem}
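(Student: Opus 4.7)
The plan is to combine the classical Klein classification of finite subgroups of $\PSL_2(\C)$ with an arithmetic constraint on traces forced by the imaginary quadratic hypothesis. Any finite subgroup $G \leq \PSL_2(\ringO)$ sits inside $\PSL_2(\C)$, whose finite subgroups are (up to conjugacy) the cyclic groups, the dihedral groups $\Dtwon$ for $n \geq 2$, the tetrahedral group $\Afour$, the octahedral group $\mathcal{S}_4$, and the icosahedral group $\mathcal{A}_5$. This follows by averaging to produce a $G$-invariant positive definite Hermitian form, conjugating $G$ into $\PSU(2) \cong \mathrm{SO}(3)$, and invoking the classification of finite rotation groups of Euclidean $3$-space. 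So the task reduces to deciding which of these types embed into $\PSL_2(\ringO)$.

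Next I would bound the possible orders of torsion elements. Let $g \in \PSL_2(\ringO)$ have finite order and pick any lift $\tilde g \in \mathrm{SL}_2(\ringO)$; it too has finite order, so its eigenvalues form a pair $\zeta, \zeta^{-1}$ of roots of unity of some order $m$, and
\[
\trace(\tilde g) \;=\; \zeta + \zeta^{-1} \;=\; 2\cos(2\pi/m) \;\in\; \R.
\]
Since this trace also lies in $\ringO$, and $\ringO \cap \R = \Z$ for an imaginary quadratic ring of integers, one gets $2\cos(2\pi/m) \in \Z$, forcing $m \in \{1,2,3,4,6\}$. A case-by-case inspection then determines the order of $g$ in $\PSL_2(\ringO)$: for $m=1,2$ one has $\tilde g = \pm\one$, so $g$ is trivial; for $m=3$, $\tilde g \neq \pm\one$ and $\tilde g^3=\one$, so $g$ has order $3$; for $m=4$ the eigenvalues are $\pm i$, hence $\tilde g^2 = -\one$ and $g$ has order $2$; for $m=6$ the eigenvalues are primitive sixth roots of unity, hence $\tilde g^3 = -\one$ and $g$ has order $3$. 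So every cyclic subgroup of $\PSL_2(\ringO)$ is isomorphic to $\Z/1$, $\Z/2$, or $\Z/3$.

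Comparing with Klein's list, $\Dtwon$ requires an element of order $n$, so only $\Kleinfourgroup$ and $\Sthree$ survive; $\Afour$ has all elements of order $1$, $2$, or $3$ and is admissible; while $\mathcal{S}_4$ (which contains a $4$-cycle) and $\mathcal{A}_5$ (which contains a $5$-cycle) are ruled out. This leaves exactly the six isomorphism types in the statement. The main obstacle is the careful $\mathrm{SL}_2$-versus-$\PSL_2$ bookkeeping: the admissible $\mathrm{SL}_2$-traces $0$ and $\pm 1$ might at first glance appear to produce $\PSL$-elements of orders $4$ and $6$, and one must check that the $\pm\one$ ambiguity of the projective quotient actually collapses these to orders $2$ and $3$ (and that all remaining orders fail because the requisite traces $\pm\sqrt 2$, $\pm\sqrt 3$ do not lie in any imaginary quadratic $\ringO$).
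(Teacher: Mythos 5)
Your argument is correct, and it is genuinely more self-contained than what the paper does: the paper offers no proof at all for this lemma, simply attributing the classification to Klein's 1875 paper on binary forms (which classifies the finite subgroups of $\PSL_2(\C)$) and leaving the arithmetic restriction to $\PSL_2(\ringO)$ as standard Bianchi-group folklore. You supply exactly the missing two-step reasoning: first the reduction, via an invariant Hermitian form, to Klein's list of finite subgroups of $\PSL_2(\C)$ (cyclic, dihedral, $\Afour$, $\mathcal{S}_4$, $\mathcal{A}_5$), and then the trace constraint $\trace(\tilde g)\in\ringO\cap\R=\Z$ for a finite-order lift $\tilde g\in\mathrm{SL}_2(\ringO)$, which forces $\trace(\tilde g)\in\{0,\pm1,\pm2\}$ and hence lift order $m\in\{1,2,3,4,6\}$; your case analysis correctly collapses $m=4,6$ to projective orders $2,3$, so torsion in $\PSL_2(\ringO)$ has order at most $3$ and only the six listed isomorphism types survive from Klein's list. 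Note that the lemma only claims the exclusion direction (``exclusively of isomorphism types''), which is precisely what your argument delivers, so no existence statements are needed. Two cosmetic points: the eigenvalue of a lift of order $m$ is a primitive $m$-th root of unity $e^{2\pi i k/m}$ with $\gcd(k,m)=1$, so the trace is $2\cos(2\pi k/m)$ rather than $2\cos(2\pi/m)$ (the conclusion $m\in\{1,2,3,4,6\}$ is unaffected); and in the case $m=2$ you should say explicitly that diagonalisability forces $\tilde g=-\one$, exactly as you do for $\tilde g^2=-\one$ when $m=4$.
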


Now we investigate the associated normalizer groups. Straight-forward verification using the multiplication tables of the implied finite groups yields the following.
\begin{lemma} \label{normalizer types}
Let $G$ be a finite subgroup of ${\rm PSL}_2(\ringO_{-m})$. Then the type of the normalizer of any subgroup of type~$\Z/ \ell$ in $G$ is given as follows for $\ell = 2$ and $\ell = 3$, where we print only cases with existing subgroup of type $\Z/\ell$.
$$ \begin{array}{c|cccccc}
{\rm Isomorphism}\medspace { \rm type}\medspace {\rm of } \medspace G & \{1\} & \Z/2 & \Z/3 & \Kleinfourgroup & \Sthree & \Afour \\ 
\hline &&&&&&  \\
\mathrm{ normalizer } \medspace \mathrm{ of } \medspace \Z/2 &  & 
 \Z/2 &      & \Kleinfourgroup & \Z/2    & \Kleinfourgroup \\ 
\mathrm{ normalizer } \medspace \mathrm{ of } \medspace \Z/3 &  &
      & \Z/3 &                 & \Sthree & \Z/3.
\end{array} $$ \normalsize
\end{lemma}

% \begin{lemma} \label{reflection}
% Let $v \in \mathcal{H}^3_\R$ be a vertex with stabilizer in~$\Gamma$ of type $\Kleinfourgroup$ or $\Afour$.
% Let $\gamma$ in~$\Gamma$ be a rotation of order $2$ around an edge $e$ adjacent to $v$. 
% Then the centralizer  $C_\Gamma(\gamma)$ reflects $\Hy^\gamma$ \mbox{--- which is the geodesic line through $e$ ---} onto itself at $v$.
% \end{lemma}
% \begin{proof}
% Denote by $\Gamma_v$ the stabilizer of the vertex $v$. 
% In the case that $\Gamma_v$ is of type $\Kleinfourgroup$, which is Abelian, 
% it admits two order-2-elements centralizing~$\gamma$ and turning the geodesic line through~$e$ 
% onto itself such that the image of~$e$ touches~$v$ from the side opposite to~$e$ (illustration:\includegraphics{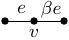}).
% In the case that~$\Gamma_v$ is of type~$\Afour$, it contains a normal subgroup of type $\Kleinfourgroup$ that admits again two such elements.
% \end{proof}

% corrected by

\begin{lemma} \label{reflection}
Let $v \in \mathcal{H}^3_\R$ be a vertex with stabilizer in~$\Gamma$ of type $\Kleinfourgroup$ or $\Afour$.
Let $\gamma$ in~$\Gamma$ be a rotation of order $2$ around an edge $e$ adjacent to $v$.
Then the centralizer  $C_\Gamma(\gamma)$ reflects $\Hy^\gamma$ \mbox{--- which is the geodesic line through $e$ ---} onto itself at $v$.
\end{lemma}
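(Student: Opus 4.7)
The plan is to linearise the question by restricting to the tangent space at $v$. Since $\Gamma$ acts by orientation-preserving isometries, the finite stabiliser $\Gamma_v$ embeds into $\mathrm{SO}(\mathrm{T}_v \Hy^3_\R) \cong \mathrm{SO}(3)$, and under this embedding $\gamma$ becomes the half-turn whose axis is the tangent line $\mathrm{T}_v \Hy^\gamma$. Producing a reflection of $\Hy^\gamma$ at $v$ that commutes with $\gamma$ then reduces to exhibiting another order-$2$ element of $\Gamma_v$ whose tangent axis at $v$ is perpendicular to $\mathrm{T}_v \Hy^\gamma$.

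First I would dispatch the case $\Gamma_v \cong \Kleinfourgroup$. Its three non-trivial elements are pairwise commuting half-turns, and I would invoke the elementary fact that two half-turns in $\mathrm{SO}(3)$ with distinct axes commute if and only if those axes are perpendicular. Consequently the three rotation axes meeting at $v$ are mutually perpendicular, and any non-trivial $\sigma \in \Gamma_v \setminus \langle \gamma \rangle$ commutes with $\gamma$, hence lies in $C_\Gamma(\gamma)$. On the geodesic $\Hy^\gamma$, such a $\sigma$ is the half-turn around an axis through $v$ perpendicular to $\Hy^\gamma$, which restricts to the reflection of $\Hy^\gamma$ about $v$.

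For the case $\Gamma_v \cong \Afour$, I would use that $\Afour$ contains a unique (hence normal) Sylow $2$-subgroup, isomorphic to $\Kleinfourgroup$, which accounts for all involutions of $\Afour$. Thus $\gamma$ already lies in this Klein four-subgroup, and the argument from the previous case applies verbatim inside it to produce the desired reflecting element $\sigma \in C_\Gamma(\gamma)$.

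The only non-trivial input is the linear-algebraic fact that commuting half-turns in $\mathrm{SO}(3)$ have perpendicular axes, which follows by simultaneously diagonalising two commuting involutions of $\R^3$ over $\R$. Everything else is a direct reading of the structure of the groups $\Kleinfourgroup$ and $\Afour$ already recorded via Lemma~\ref{finiteSubgroups}; no feature specific to the arithmetic of $\ringO_{-m}$ is needed beyond the list of possible stabiliser types.
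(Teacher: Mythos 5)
Your argument is correct, and since the paper only quotes this lemma from \cite{accessing_Farrell_cohomology} without reproducing a proof, your linearisation at $v$ serves as a valid self-contained substitute. All the needed points are in place: the geodesic through $e$ passes through its endpoint $v$, so $\gamma$ fixes $v$ and lies in $\Gamma_v$; in both $\Kleinfourgroup$ and $\Afour$ every involution lies in a Klein four-subgroup (the normal Sylow $2$-subgroup in the $\Afour$ case), which supplies a second involution $\sigma$ commuting with $\gamma$; and commuting half-turns in $\mathrm{SO}(\mathrm{T}_v\Hy^3_\R)$ have perpendicular axes, so $d\sigma_v$ negates $\mathrm{T}_v\Hy^\gamma$, whence $\sigma$ maps the geodesic $\Hy^\gamma$ to itself reversing orientation and fixing $v$, i.e.\ restricts to the reflection at $v$. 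One streamlining worth noting: once $\sigma \in C_\Gamma(\gamma)$ is in hand, the identity $\sigma(\Hy^\gamma) = \Hy^{\sigma\gamma\sigma^{-1}} = \Hy^\gamma$ already shows that $\sigma$ preserves the axis, and since $\sigma \neq \gamma$ cannot fix $\Hy^\gamma$ pointwise (the order-$2$ rotation about a given geodesic is unique in $\mathrm{PSL}_2(\C)$), its restriction is an orientation-reversing isometry of the line fixing $v$, hence the reflection at $v$; this bypasses the perpendicular-axes computation in $\mathrm{SO}(3)$, though your version of that step is also correctly justified.
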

\begin{proof}
Denote by $\Gamma_v$ the stabilizer of the vertex $v$.
In the case that $\Gamma_v$ is of type $\Kleinfourgroup$, which is Abelian,
it admits two order-2-elements $\beta, \beta\cdot\gamma$ centralizing~$\gamma$ and turning the geodesic line through~$e$
onto itself such that the image of~$e$ touches~$v$ from the side opposite to~$e$ (illustration:\includegraphics{illustration.pdf}).
In the case that~$\Gamma_v$ is of type~$\Afour$, it contains a normal subgroup of type $\Kleinfourgroup$ that admits again two such elements.
\end{proof}

Any edge of the reduced torsion sub-complex is obtained by merging a chain of edges on the intersection of one geodesic line with
 some strict fundamental domain for~$\Gamma$ in $\Hy$.
We call this chain the \emph{chain of edges associated to $\alpha$}.
It is well defined up to translation along the rotation axis of~$\alpha$. 

\begin{lemma} \label{2-torsion axis}
Let $\alpha$ be any $2$--torsion element in~$\Gamma$.
Then the chain of edges associated to $\alpha$ is a fundamental domain for the action of the centralizer of $\alpha$
 on the rotation axis of $\alpha$.
\end{lemma}
\begin{proof}
We distinguish the following two cases of how $\left\langle \alpha \right\rangle \cong \Z/2$ is included into~$\Gamma$.

\textit{First case}.
Suppose that there is no subgroup of type $\Kleinfourgroup$ in~$\Gamma$ which contains $\left\langle \alpha \right\rangle$.
Then the connected component to which the rotation axis of $\alpha$ passes in the quotient of the  2-torsion subcomplex,
is homeomorphic to a circle, with cell structure $\circlegraph$.
We can write $\Gamma_e = \left\langle \alpha \right\rangle$ and $\Gamma_{e'} = \left\langle \gamma \alpha \gamma^{-1} \right\rangle$.
One immediately checks that any fixed point $x \in \Hy$ of $\alpha$ induces the fixed point $\gamma \cdot x$
 of $\gamma \alpha \gamma^{-1}$. As PSL$_2(\C)$ acts by isometries, the whole fixed point set in $\Hy$
 of $\alpha$ is hence identified by~$\gamma$ with the fixed point set of $\gamma \alpha \gamma^{-1}$.
This gives us the identification $\gamma^{-1}$ from $e'$ to an edge on the rotation axis of $\alpha$, adjacent to $e$ because of the first condition on $\gamma$.
We repeat this step until we have attached an edge $\delta e$ on the orbit of the first edge $e$, with $\delta \in \Gamma$.
As $\delta$ is an isometry, the whole chain is translated by $\delta$ from the start at $e$ to the start at $\delta e$.
So the group $ \left\langle \delta \right\rangle$ acts on the rotation axis with fundamental domain our chain of edges.
And $\delta \alpha \delta^{-1}$ is again the rotation of order $2$ around the axis of $\alpha$.
So, $ \delta \alpha \delta^{-1} = \alpha$ and therefore $ \left\langle \delta \right\rangle < C_\Gamma(\alpha)$.

\textit{Second case}.
Suppose that there is a subgroup $G$ of~$\Gamma$ of type $G \cong \Kleinfourgroup$ containing $\left\langle \alpha \right\rangle$.
Then the rotation axis of $\alpha$ passes in the quotient of the  2-torsion subcomplex to an edge on a connected component of homeomorphism type 
$\edgegraph$, $\graphFive$ or $\graphTwo$ (see~\cite{BerkoveRahm}).
If there is no further inclusion $G<G'<\Gamma$ with $G'\cong \Afour$, let $G' := G$.
Then the chain associated to $\alpha$ can be chosen such that one of its endpoints is stabilized by $G'$.
The other endpoint of this chain must then lie on a different $\Gamma$--orbit, and admit as stabilizer a group $H'$ containing $\left\langle \alpha \right\rangle$, of type $\Kleinfourgroup$ or $\Afour$.
By Lemma~\ref{reflection}, each $G'$ and $H'$ contain a reflection of the rotation axis of $\alpha$, centralizing $\alpha$. These two reflections must differ from one another because they do not fix the chain of edges.
So their free product tessellates the rotation axis of $\alpha$ with images of the chain of edges associated to $\alpha$.
\end{proof}

\section{A spine for the complexified Bianchi orbifolds} \label{A spine for the complexified Bianchi orbifolds}
In this section, we prove the following theorem, which will be used to prove Theorem
\ref{mainthm}.

\begin{theorem} \label{spine}
Let $\Gamma$ be a Bianchi group. Then there is a $\Gamma$-equivariant homotopy equivalence 
between $\HHH_\CC^3$ and $\HHH_\RR^3$. In particular, $\HHH_\CC^3/\Gamma$
is homotopy equivalent to $\HHH_\RR^3/\Gamma$.
\end{theorem}
\begin{proof}
We consider the ball model for  complex hyperbolic $3$-space $\HHH_\CC^3$ \cite{Goldman}
(which is called the Klein model in \cite{ElstrodtGrunewaldMennicke}). This provides us with a complex structure such that
$\HHH_\RR^3$ is naturally embedded into $\HHH_\CC^3$ as the fixed points set of the complex conjugation.
In the other direction, following \cite{Goldman}, we define a projection as follows. For any point $z\in \HHH_\CC^3$, 
there is a unique geodesic arc, with respect to the Bergman metric, $\alpha_{z, \bar{z}}$ from $z$ to its complex conjugate $\bar{z}$
(see e.g. \cite{Goldman}*{Theorem 3.1.11}); and the intersection point $p(z)= \alpha_{z, \bar{z}} \cap \HHH_\RR^3$
is equidistant to $z$ and $\bar{z}$ \cite{Goldman}*{Section 3.3.6}. This defines a projection 
$p\colon \HHH_\CC^3 \to \HHH_\RR^3$. Notice that $p$ is ${\rm PSO}(1,3)$-equivariant  and hence also $\Gamma$-equivariant.

Clearly, the restriction $p_{|\HHH_\RR^3}$ is the identity. On the other hand, let 
$$
H \colon \HHH_\CC^3 \times [0,1] \to \HHH_\CC^3 \, , \qquad H(z,t) = \alpha_{z, \bar{z}} \left( t\rho (z, p(z)) \right)
$$ 
where $\rho$ is the hyperbolic distance and we have parametrized the geodesic arc such that 
$\alpha_{z, \bar{z}}(0) = p(z)$ and $\alpha_{z, \bar{z}} ( \rho (z, p(z)) ) = z$. Then $H$
is an homotopy between $p$ and the identity map of $\HHH_\CC^3$. 
Furthermore, since ${\rm PSO}(1,3)$ is a group of  isometries of $\HHH_\CC^3$,
it sends geodesics to geodesics and so, for any $M\in {\rm PSO}(1,3)$,
\begin{eqnarray}\label{H equivariant}
H(Mz,t) &=&\alpha_{Mz, \overline{Mz}} \left( t\rho (Mz, p(Mz)) \right) \nonumber \\
&=& M\alpha_{z, \bar{z}} \left( t\rho (z, p(z)) \right) =MH(z,t) \, .
\end{eqnarray}
It follows that $H$ is   ${\rm PSO}(1,3)$-equivariant,
in particular it is $\Gamma$-equivariant.
\end{proof}

\begin{remark}\label{rem spine} \normalfont
From \eqref{H equivariant} it follows that, if $g\in {\rm PSO}(1,3)$
fixes a point $z$, with $z\not= \bar{z}$, then g fixes the geodesic arc $\alpha_{z, \bar{z}}$
pointwise. Indeed, from the fact that $g\cdot \bar{z} = \overline{g\cdot z} = \bar{z}$,
we deduce that $g(\alpha_{z, \bar{z}}) = \alpha_{z, \bar{z}}$. 
Moreover, for every $z' \in \alpha_{z, \bar{z}}$, we see that  
$g\cdot z' = z'$, because otherwise we get a contradiction from the following equalities:
$$
\rho (z,z') = \rho (g\cdot z , g\cdot z') = \rho (z, g\cdot z') \, ,
$$
where $\rho$ is the hyperbolic distance. 
\end{remark}

\begin{remark}\label{rem spine 2} \normalfont
From Lemma~\ref{stab in SL3} it follows that the points $z\in \Hy^3_\C$
such that the stabilizer  ${\rm Stab}_\Gamma (z ) \subset \Gamma$ is not cyclic
are isolated, hence,   Theorem~\ref{spine} implies that 
such points $z$ belong to $\Hy^3_\R$.

\end{remark}

\section{Orbifold cohomology of real Bianchi orbifolds} \label{Kraemer numbers and orbifold cohomology}

Our main results on the vector space structure of the Chen--Ruan orbifold cohomology of Bianchi orbifolds are the below two theorems.

\begin{theorem} \label{3-torsion quotients}
For any element $\gamma$ of order $3$ in a finite index subgroup~$\Gamma$ in a Bianchi group with units~$\{\pm 1\}$,
 the quotient space $\Hy^\gamma /_{C_\Gamma(\gamma)}$ of the rotation axis modulo the centralizer of $\gamma$
 is homeomorphic to a circle.
\end{theorem}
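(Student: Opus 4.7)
My strategy is to show that $C_\Gamma(\gamma)$ acts on the geodesic axis $\Hy^\gamma \cong \R$ by a discrete group of translations whose image in $\mathrm{Isom}^+(\R)$ is infinite; from this, the identification with $S^1$ is immediate. By the reduction recalled at the start of Section~\ref{The conjugacy classes of finite order elements in the Bianchi groups}, I treat $\Hy^\gamma$ as the real rotation axis inside $\Hy^3_\R$.

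First I would verify that $C_\Gamma(\gamma)$ acts on $\Hy^\gamma$ by orientation-preserving isometries. Any isometry of $\Hy^\gamma \cong \R$ is either a translation or a reflection, and a reflection swaps the two endpoints of $\Hy^\gamma$ on $\partial \Hy^3_\R$, thereby reversing the direction of rotation about the axis and hence conjugating $\gamma$ to $\gamma^{-1} = \gamma^2$. Since $\gamma$ has order $3$, no such element can centralise $\gamma$. Therefore $C_\Gamma(\gamma)$ maps into $\mathrm{Isom}^+(\Hy^\gamma) \cong \R$ with finite kernel, namely the subgroup pointwise-fixing $\Hy^\gamma$ (a finite group of rotations around the axis, contained in the stabiliser of any of its points).

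Next I would prove that the image of $C_\Gamma(\gamma)$ is infinite, using the refined cell complex~$Z$. Because stabilisers in~$Z$ fix cells pointwise, $\Hy^\gamma$ is a union of edges and vertices of~$Z$, and the non-compactness of $\Hy^\gamma \cong \R$ forces the collection of edges on $\Hy^\gamma$ to be infinite. On the other hand $Z$ admits only finitely many $\Gamma$-orbits of edges, so by the pigeon-hole principle some $\Gamma$-orbit meets $\Hy^\gamma$ in infinitely many edges. Given two such edges $e_1, e_2$ with $g\cdot e_1 = e_2$ for $g \in \Gamma$, the geodesics $\Hy^\gamma$ and $g\cdot\Hy^\gamma$ share the segment $e_2$ and must therefore coincide, so $g$ lies in $\mathrm{Stab}_\Gamma(\Hy^\gamma)$. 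Since the pointwise stabiliser of $e_1$ is finite, the infinitely many distinct images of $e_1$ along $\Hy^\gamma$ yield infinitely many distinct elements of $\mathrm{Stab}_\Gamma(\Hy^\gamma)$; as $C_\Gamma(\gamma)$ has index at most~$2$ in this setwise stabiliser, it is infinite as well.

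Combining the two steps, the image of $C_\Gamma(\gamma)$ in $\mathrm{Isom}^+(\Hy^\gamma) \cong \R$ is an infinite discrete subgroup, necessarily of the form $\Z\cdot t$ for some $t>0$, and the finite kernel acts trivially on $\Hy^\gamma$. Hence $\Hy^\gamma / C_\Gamma(\gamma)$ is homeomorphic to $\R/\Z \cong S^1$. I expect the second step to be the main obstacle: one has to upgrade an orbit-counting statement to an honest infinite subgroup of $\mathrm{Stab}_\Gamma(\Hy^\gamma)$, and this rests on the geometric fact that two geodesics sharing a non-degenerate segment must coincide. A purely algebraic alternative would apply Dirichlet's unit theorem to the order $\ringO_{-m}[\gamma]$ inside the quartic CM field $\rationals(\sqrt{-m},\sqrt{-3})$, which is a proper extension of $\rationals(\sqrt{-m})$ precisely because the hypothesis on the units of $\ringO_{-m}$ excludes $m=3$.
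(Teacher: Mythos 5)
Your route is genuinely different from the paper's: the paper proves the theorem via the chain-of-edges machinery (lemmas 26, 27 and corollary 22 of the Farrell--Tate cohomology preprint) together with a case distinction according to whether $\Hy^\gamma$ meets a vertex with stabiliser of type $\Sthree$, whereas you replace both cases at once by the clean observation that an element of $\Gamma$ preserving the axis but reversing it conjugates $\gamma$ to $\gamma^{-1}=\gamma^2\neq\gamma$, so the centraliser can only translate along $\Hy^\gamma$. That part is correct, and the descent from the setwise stabiliser is also essentially fine, although you should justify the claim that $C_\Gamma(\gamma)$ has index at most $2$ there: it follows because any $g$ with $g\Hy^\gamma=\Hy^\gamma$ sends $\gamma$ to a rotation of $\Gamma$ about the same axis, and the pointwise stabiliser of the axis is exactly $\langle\gamma\rangle$ by Klein's classification (lemma~\ref{finiteSubgroups}), so the setwise stabiliser normalises $\langle\gamma\rangle$ and $C_\Gamma(\gamma)$ is the kernel of the induced map to ${\rm Aut}(\Z/3)$.

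The genuine gap is the sentence ``the non-compactness of $\Hy^\gamma\cong\R$ forces the collection of edges on $\Hy^\gamma$ to be infinite''. The refined complex $Z$ does contain non-compact edges (for instance the vertical edges of the Bianchi fundamental polyhedron ending at the cusp $\infty$), so a non-compact geodesic line can a priori be covered by finitely many closed edges, and then your pigeonhole argument collapses. What is missing is precisely the statement that the axis of an order-$3$ element does not end at a cusp, and this is exactly where the hypothesis on the units must enter: the endpoints of $\Hy^\gamma$ on $\partial\Hy^3_\R$ are roots of a quadratic equation over $\rationals(\sqrt{-m})$ of discriminant $\trace^2-4=-3$, hence lie in $\rationals(\sqrt{-m})\cup\{\infty\}$ only for $m=3$; equivalently, an elliptic element fixing a cusp is conjugate to an upper triangular matrix whose diagonal entries are units of order greater than $2$. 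As written, your main argument never uses the units hypothesis, yet the theorem is false for the Eisenstein case: in ${\rm PSL}_2(\ringO_{-3})$ the order-$3$ element $\pm\begin{pmatrix}\zeta_6&0\\0&\zeta_6^{-1}\end{pmatrix}$ has axis the vertical line over $0$, its centraliser consists of diagonal matrices which fix this axis pointwise, and the quotient of the axis by the centraliser is a line, not a circle. Once you insert the cusp-avoidance argument to legitimise the infinitude of edges on the axis, the remainder of your proof (discrete infinite image in ${\rm Isom}^+(\R)$, hence quotient $\R/\Z$) goes through.
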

\begin{proof}
As $\gamma$ is a  non-trivial torsion element,  
by \cite{accessing_Farrell_cohomology}*{lemma 22}
 the~$\Gamma$--image of the chain of edges associated to $\gamma$ contains the rotation axis $\Hy^\gamma$.
  Now we can observe two cases.
\begin{itemize}
\item[$\circlegraph$]
First, assume that the rotation axis of $\gamma$ does not contain any vertex of stabilizer type~$\Sthree$ 
(from~\cite{accessing_Farrell_cohomology}, we know that this gives us a circle as a path component in the quotient of the 
 $3$--torsion sub-complex).
Assume that there exists a reflection of~$\Hy^\gamma$ onto itself by an element of~$\Gamma$.
Such a reflection would fix a point on~$\Hy^\gamma$.
Then the normalizer of $\left\langle \gamma \right\rangle$ in the stabilizer of this point would contain the reflection.
This way, Lemma~\ref{normalizer types} yields that this stabilizer is of type $\Sthree$, which we have excluded. 
Thus, there can be no reflection of~$\Hy^\gamma$ onto itself by an element of~$\Gamma$.
\\
As~$\Gamma$ acts by isometries preserving a metric of non-positive curvature (a CAT(0) metric), every element $g \in \Gamma$ sending an edge of the chain for~$\gamma$
 to an edge on~$\Hy^\gamma$ outside the fundamental domain, can then only perform a translation on~$\Hy^\gamma$.
A translation along the rotation axis of~$\gamma$ commutes with~$\gamma$, so $g \in {C_\Gamma(\gamma)}$.
Hence the quotient space $\Hy^\gamma /_{C_\Gamma(\gamma)}$ is homeomorphic to a circle.
\item[$\edgegraph$]
If~$\Hy^\gamma$ contains a point with stabilizer in~$\Gamma$ of type $\Sthree$, then there are exactly two $\Gamma$--orbits of such points.
The elements of order $2$ do not commute with the elements of order $3$ in $\Sthree$, so the centralizer of $\gamma$ does not contain the former ones.
Hence, ${C_\Gamma(\gamma)}$ does not contain any reflection of~$\Hy^\gamma$ onto itself.
Denote by $\alpha$ and $\beta$ elements of order $2$ of each of the stabilizers of the two endpoints of a chain of edges for~$\gamma$.
Then~$\alpha\beta$ performs a translation on~$\Hy^\gamma$ and hence commutes with $\gamma$.
A fundamental domain for the action of $\left\langle \alpha\beta \right\rangle$ on~$\Hy^\gamma$ is given by the chain of edges for~$\gamma$ united with its reflection through one of its endpoints.
As no such reflection belongs to the centralizer of $\gamma$ and the latter endpoint is the only one on its $\Gamma$--orbit in this fundamental domain, the quotient $\Hy^\gamma /_{C_\Gamma(\gamma)}$ matches with the quotient $\Hy^\gamma /_{\left\langle \alpha\beta \right\rangle}$, which is homeomorphic to a circle.
\end{itemize}
\end{proof}

\begin{theorem} \label{2-torsion quotients}
Let $\gamma$ be an element of order $2$ in a Bianchi group~$\Gamma$ with units~$\{\pm 1\}$.
Then, the homeomorphism type of the quotient space $\Hy^\gamma /_{C_\Gamma(\gamma)}$ is
\end{theorem}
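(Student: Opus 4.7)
The plan is to mirror the two-case structure of the proof of Theorem~\ref{3-torsion quotients}, using Lemma~\ref{reflection} as the crucial ingredient that distinguishes the behaviour from the 3-torsion situation. The key observation I would establish first is: among the finite subgroups of~$\Gamma$ containing $\langle \gamma \rangle$, Lemma~\ref{normaliser types} shows that only those of types $\Kleinfourgroup$ and $\Afour$ have a normaliser of $\langle \gamma \rangle$ strictly larger than $\langle \gamma \rangle$, so only these contribute non-trivial elements of $C_\Gamma(\gamma)$ acting on $\Hy^\gamma$ beyond~$\gamma$ itself. By Lemma~\ref{reflection}, these extra elements act as reflections of the geodesic line $\Hy^\gamma$ through the corresponding vertex. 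By contrast, at a vertex of stabiliser type~$\Sthree$ on $\Hy^\gamma$, the two other elements of order~$2$ do not commute with~$\gamma$, so no reflection of $\Hy^\gamma$ through such a vertex lies in $C_\Gamma(\gamma)$.

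In the first case, I would assume that no vertex on $\Hy^\gamma$ has stabiliser of type $\Kleinfourgroup$ or~$\Afour$. From the analysis of Conditions~\cellCondition\ and~\SylowCondition\ (which cause edges meeting at $\Z/2$ or $\Sthree$ vertices to be merged in the reduced 2-torsion sub-complex), the $\Gamma$-image of $\Hy^\gamma$ is then a circle in the orbit space. As in the corresponding case of Theorem~\ref{3-torsion quotients}, any element $g \in \Gamma$ identifying an edge of the chain for $\gamma$ with another edge on $\Hy^\gamma$ must preserve the geodesic line and, being a CAT(0) isometry with no fixed point available for reflection, must translate along $\Hy^\gamma$. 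Since translations along the rotation axis of~$\gamma$ commute with~$\gamma$, one has $g \in C_\Gamma(\gamma)$, and the quotient $\Hy^\gamma/C_\Gamma(\gamma)$ is a circle.

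In the second case, $\Hy^\gamma$ meets a vertex of stabiliser type $\Kleinfourgroup$ or~$\Afour$. Because such vertices are precisely the endpoints of reduced edges in the 2-torsion sub-complex (the conditions fail there), the $\Gamma$-image of $\Hy^\gamma$ is a segment in the orbit space, with two such endpoints --- possibly on the same $\Gamma$-orbit. At each endpoint of the chain of edges for~$\gamma$, Lemma~\ref{reflection} supplies a reflection of~$\Hy^\gamma$ lying in $C_\Gamma(\gamma)$. Their composite is a translation of~$\Hy^\gamma$, and I would argue, just as in the $\edgegraph$ case of Theorem~\ref{3-torsion quotients}, that the chain of edges is a fundamental domain for this translation subgroup, hence a fundamental domain for the full action of $C_\Gamma(\gamma)$ on~$\Hy^\gamma$ generated by the translation and either one of the reflections. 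The quotient is then a closed interval.

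The main obstacle, as in the 3-torsion case, will be controlling exactly which elements of $C_\Gamma(\gamma)$ can identify points along $\Hy^\gamma$ --- in other words, ruling out any hidden reflection not produced by a vertex of type $\Kleinfourgroup$ or~$\Afour$. The CAT(0) action of~$\Gamma$ on $\mathcal{H}^3_\R$, combined with the fact that the only source of reflections of $\Hy^\gamma$ inside $C_\Gamma(\gamma)$ is a vertex stabiliser containing the $\Kleinfourgroup$ that normalises $\langle \gamma \rangle$ (Lemmas~\ref{normaliser types} and~\ref{reflection}), dispatches this concern and yields the dichotomy circle/segment according to whether $\langle \gamma \rangle$ is contained in a subgroup of type $\Kleinfourgroup$ in~$\Gamma$.
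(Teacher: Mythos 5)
Your overall strategy is the same as the paper's: split according to whether $\langle\gamma\rangle$ lies in a copy of $\Kleinfourgroup$, use Lemma~\ref{normaliser types} to see that reflections of $\Hy^\gamma$ inside $C_\Gamma(\gamma)$ can only arise at vertices of stabiliser type $\Kleinfourgroup$ or $\Afour$, treat the circle case exactly as in the first case of Theorem~\ref{3-torsion quotients}, and in the segment case use Lemma~\ref{reflection} together with the fact that the involutions of $\Afour$ lie in its normal $\Kleinfourgroup$ to produce reflections at the endpoints that commute with~$\gamma$.

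The segment case as written, however, contains a slip and a gap. The composite of the reflections at the two endpoints of the chain translates $\Hy^\gamma$ by \emph{twice} the length of the chain; hence the chain is a fundamental domain for the infinite dihedral group generated by the two reflections, whereas a fundamental domain for the translation subgroup is the chain united with its reflected copy through an endpoint. Your claim that the chain is a fundamental domain for the translation subgroup, ``hence'' for the larger group generated by the translation and a reflection, is internally inconsistent, since a set cannot be a fundamental domain both for a group and for an index-two subgroup. More importantly, you identify the action of $C_\Gamma(\gamma)$ on $\Hy^\gamma$ with this dihedral group without justification: your final paragraph only rules out hidden \emph{reflections}, not shorter translations in the centraliser nor an identification of the two endpoints of the chain. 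The paper settles all of this at once by invoking \cite{accessing_Farrell_cohomology}*{lemma 26} (the chain of edges for a $2$--torsion element is a fundamental domain for the centraliser on the rotation axis) and \cite{accessing_Farrell_cohomology}*{corollary 22} (the chain represents a reduced edge with distinct endpoints, hence carries no $\Gamma$--identifications), which is precisely what yields ``an edge \emph{without identifications}''. For the bare homeomorphism type your argument can be salvaged --- two commuting reflections of $\Hy^\gamma$ in $C_\Gamma(\gamma)$ at distinct points force the image of the centraliser in the isometry group of the line to be infinite dihedral, whose quotient is a compact interval in any case --- but the fundamental-domain bookkeeping must be corrected, and the control of identifications should be proved (or imported from the cited lemmas) rather than asserted.
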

\begin{itemize}
\item[$\edgegraph$]
an edge without identifications, if $\left\langle \gamma \right\rangle$ is contained in a subgroup of type $\Kleinfourgroup$
inside~$\Gamma$ and 
\item[$\circlegraph$]
a circle, otherwise.
\end{itemize}
\begin{proof}
By Lemma~\ref{2-torsion axis},
the chain of edges for~$\gamma$ is a fundamental domain for~${C_\Gamma(\gamma)}$
 on the rotation axis $\Hy^\gamma$ of $\gamma$.
 Again, we have two cases.
\begin{itemize}
\item[$\edgegraph$]
If $\left\langle \gamma \right\rangle$ is contained in a subgroup of type $\Kleinfourgroup$
inside~$\Gamma$, then any chain of edges for~$\gamma$ admits endpoints of stabilizer types~$\Kleinfourgroup$ or~$\Afour$,
because we can merge any two adjacent edges on a $2$--torsion axis with touching point of stabilizer type $\Z/2$ or $\Sthree$.
As~$\Kleinfourgroup$ is an Abelian group and the reflections in~$\Afour$ are contained in the normal subgroup~$\Kleinfourgroup$,
the reflections in these endpoint stabilizers commute with~$\gamma$,
so the quotient space $\Hy^\gamma /_{C_\Gamma(\gamma)}$ is represented by a chain of edges for~$\gamma$.
What remains to show, is that there is no element of~${C_\Gamma(\gamma)}$ identifying the two endpoints of stabilizer type
$\Kleinfourgroup$ (respectively~$\Afour$).
Assume that there is an element $g \in {C_\Gamma(\gamma)}$ carrying out this identification.
Any one of the two endpoints, denote it by $x$, contains in its stabilizer a reflection $\alpha$ of the rotation axis of $\gamma$.
The other endpoint is then $g \cdot x$ and contains in its stabilizer the conjugate $^g \alpha$ by $g$.
Denote by $m$ the point in the middle of $(x,g\cdot x)$, i.e. the point on $\Hy^\gamma$ with equal distance to $x$ and to $g \cdot x$.
As $\left\langle ^g \alpha, \gamma \right\rangle$ is Abelian, $^g \alpha$ is in~${C_\Gamma(\gamma)}$ and hence $(x,m)$ and $(g \cdot x, m)$ 
are equivalent modulo~${C_\Gamma(\gamma)}$ via the element  $^g \alpha g $ 
(illustration: \fouredges). 
Then the chain of edges for $\gamma$ does not reach from $x$ to $g \cdot x$.
This contradicts our hypotheses, so the homeomorphism type of $\Hy^\gamma /_{C_\Gamma(\gamma)}$  is an edge without identifications.
\item[$\circlegraph$]
The other case is analogous to the first case of the proof of Theorem~\ref{3-torsion quotients},
 the r\^{o}le of $\Sthree$ being  played by $\Kleinfourgroup$ and $\Afour$.
\end{itemize}
\end{proof}

Furthermore, the following easy-to-check statement will be useful for our orbifold cohomology computations.
\begin{remark} \label{number of conjugacy classes in finite subgroups}
There is only one conjugacy class of elements of order $2$ in $\Sthree$ as well as in $\Afour$. In $\Sthree$,
 there is also only one conjugacy class of elements of order $3$,
 whilst in $\Afour$ there is an element $\gamma$ such that $\gamma$ and $\gamma^2$
 represent the two conjugacy classes of elements of order $3$.
\end{remark}
\begin{proof}
In cycle type notation, we can explicitly establish the multiplication tables of $\Sthree$ and $\Afour$,
 and compute the conjugacy classes.
\end{proof}

\begin{corollary}[Corollary to Remark~\ref{number of conjugacy classes in finite subgroups}]
\label{number of elements modulo conjugacy}
Let $\gamma$ be an element of order $3$ in a Bianchi group~$\Gamma$ with units $\{\pm 1\}$.
Then, $\gamma$ is conjugate in~$\Gamma$ to its square $\gamma^2$ if and only if there exists a group $G \cong \Sthree$ with $\left\langle \gamma \right\rangle \subsetneq G \subsetneq \Gamma$. 
\end{corollary}

Denote by $\lambda_{2\ell}$ the number of conjugacy classes of subgroups of type $\Z/_{\ell\Z}$
in a finite index subgroup {$\Gamma$} in a Bianchi group with units $\{\pm 1 \}$.
Denote by $\lambda_{2\ell}^*$ the number of conjugacy classes of those of them which are contained in a subgroup of type $\Dtwon$
in~$\Gamma$.
By Corollary~\ref{number of elements modulo conjugacy},
there are \mbox{$2\lambda_6 -\lambda_6^*$} conjugacy classes of elements of order $3$.
As a result of Theorems~\ref{3-torsion quotients} and~\ref{2-torsion quotients},
we have the following isomorphism of vector spaces:
\begin{eqnarray*}
&& \bigoplus_{\gamma \in T} \Homol^{\bullet} 
\left( (\Hy_\R)^{\gamma}/C_\Gamma (\gamma) ; \mathbb{Q} \right) \\
&& \cong  \Homol^{\bullet} \left( \Hy_\R / _\Gamma; \thinspace \rationals \right) 
\bigoplus\nolimits^{\lambda_4^*} \Homol^{\bullet} \left( \edgegraph; \thinspace \rationals \right)
\bigoplus\nolimits^{(\lambda_4 -\lambda_4^*)} \Homol^{\bullet} \left( \circlegraph; \thinspace \rationals \right)
\bigoplus\nolimits^{(2\lambda_6 -\lambda_6^*)} \Homol^{\bullet} \left(\circlegraph; \thinspace \rationals \right) \, ,
\end{eqnarray*}
where $T\subset \Gamma$ is a set of representatives of conjugacy classes 
of elements of finite order in $\Gamma$.
The (co)homology of the quotient space $\Hy_\R  / _\Gamma$
 has been computed numerically for a large scope of Bianchi groups \cite{Vogtmann}, \cite{Scheutzow}, \cite{Higher_torsion};
 and bounds for its Betti numbers have been given in \cite{KraemerThesis}.
Kr\"amer \cite{Kraemer} has determined number-theoretic formulae
 for the numbers $\lambda_{2\ell}$ and $\lambda_{2\ell}^*$ of conjugacy classes of finite subgroups in the full Bianchi groups.
Kr\"amer's formulae have been evaluated for hundreds of thousands of Bianchi groups \cite{accessing_Farrell_cohomology}, 
and these values are matching with the ones from the orbifold structure computations with \cite{BianchiGP}
in the cases where the latter are available.

When we pass to the complexified orbifold $[\Hy^3_\C / \Gamma]$,
the real line that is the rotation axis in~$\Hy_\R$ of an element of finite order, becomes a complex line. 
However, the centralizer still acts in the same way by reflections and translations.
So, the interval $\edgegraph$ as a quotient of the real line yields a stripe
 $\edgegraph \times \R$ as a quotient of the complex line. 
And the circle $\circlegraph$ as a quotient of the real line yields a cylinder
 $\circlegraph \times \R$ as a quotient of the complex line.
Therefore,  using the degree shifting numbers of Lemma~\ref{rotationAge}, we obtain the result of Corollary~\ref{introduced result},
$$ \Homol^d_{\rm CR}\left([\Hy^3_\C / \Gamma] \right) \cong 
\Homol^d\left(\Hy_\C / _{\Gamma}; \thinspace \rationals \right) \oplus
\begin{cases}
\rationals^{\lambda_4 +2\lambda_6 -\lambda_6^*}, & d=2, \\
 \rationals^{\lambda_4-\lambda_4^* +2\lambda_6 -\lambda_6^*}, & d=3, \\
0, & \mathrm{otherwise}. \end{cases}$$ 

\bigskip

As the authors have calculated the Bredon homology $\Homol^\mathfrak{Fin}_0(\Gamma; R_\C)$ of the Bianchi groups
with coefficients in the complex representation ring functor $R_\C$ (see \cite{RahmAIF}), 
Mislin's following lemma allows us a verification of our computations (we calculate both sides of Mislin's isomorphism explicitly).

\begin{lemma}[Mislin \cite{MislinValette}] \label{numberOfConjugacyClasses}
 Let $\Gamma$ be an arbitrary group and write ${\rm FC}(\Gamma)$ for the set of conjugacy classes of elements of finite order in $\Gamma$.
Then there is an isomorphism
$$\Homol^\mathfrak{Fin}_0(\Gamma; R_\C) \otimes_\Z \C \cong \C[{\rm FC}(\Gamma)].$$
\end{lemma}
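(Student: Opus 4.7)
The plan is to compute $\Homol^\mathfrak{Fin}_0(\Gamma; R_\C)$ from a model $X$ for $E_\mathfrak{Fin}\Gamma$ (for the Bianchi groups we may take $\Hy^3_\R$) equipped with a $\Gamma$--CW structure whose cell stabilisers are finite. The standard Bredon chain complex then presents our group as the cokernel
$$\bigoplus_{[\sigma]\in X_1/\Gamma} R_\C(\Gamma_\sigma) \xrightarrow{\; d_1 \;} \bigoplus_{[v]\in X_0/\Gamma} R_\C(\Gamma_v) \twoheadrightarrow \Homol^\mathfrak{Fin}_0(\Gamma; R_\C),$$
the map $d_1$ being the alternating sum of induction maps $R_\C(\Gamma_\sigma)\to R_\C(\Gamma_{v_i})$ along the two endpoint inclusions (after suitably conjugating $\Gamma_\sigma$ into each $\Gamma_{v_i}$) of every $1$--cell $\sigma$.

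Next, I would tensor with $\C$ and invoke the classical character--theoretic isomorphism $R_\C(G)\otimes_\Z\C \cong \C[\mathrm{Conj}(G)]$ for every finite group $G$, where $\mathrm{Conj}(G)$ denotes the set of conjugacy classes of $G$. Under this identification, induction from a subgroup sends a basis conjugacy class $[x]$ of the smaller group to the formal sum of the classes of the bigger group lying above it. Composing with the projection $\mathrm{Conj}(\Gamma_v)\to {\rm FC}(\Gamma)$, that sends a $\Gamma_v$--conjugacy class to the $\Gamma$--conjugacy class of its elements, defines a $\C$--linear map
$$\Phi\colon \bigoplus_{[v]} \C[\mathrm{Conj}(\Gamma_v)] \longrightarrow \C[{\rm FC}(\Gamma)].$$
A direct check shows $\Phi\circ d_1 = 0$, so $\Phi$ descends to
$$\overline{\Phi}\colon \Homol^\mathfrak{Fin}_0(\Gamma; R_\C)\otimes_\Z \C \longrightarrow \C[{\rm FC}(\Gamma)].$$

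Finally, I would prove $\overline{\Phi}$ is bijective. Surjectivity is straightforward: every torsion element $g\in\Gamma$ is contained in a finite subgroup of $\Gamma$, which must fix at least one point $v$ of $X$ (since $X$ is a model for $E_\mathfrak{Fin}\Gamma$), so $[g]\in{\rm FC}(\Gamma)$ is hit by the class of $g$ inside $\C[\mathrm{Conj}(\Gamma_v)]$. For injectivity, suppose $g\in\Gamma_v$ and $h\in\Gamma_w$ satisfy $h=\gamma g\gamma^{-1}$ for some $\gamma\in\Gamma$. Both $\gamma v$ and $w$ lie in the fixed--point set $X^{\langle h\rangle}$, which is connected (in fact contractible, by the defining property of $E_\mathfrak{Fin}\Gamma$). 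Hence a path of $1$--cells inside $X^{\langle h\rangle}$ joins $\gamma v$ to $w$, each edge carrying $\langle h\rangle$ inside its stabiliser; applying $d_1$ to that path yields the required sequence of local relations which identifies the class of $g$ at $v$ with the class of $h$ at $w$ in the cokernel.

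The main obstacle is this last injectivity step. Surjectivity and the character--theoretic repackaging are formal; the substantive work is to reduce the single global conjugation witness $\gamma$ to a finite sequence of cell--by--cell relations inside the Bredon complex, and this is precisely where the contractibility of the fixed--point sets $X^H$ for finite subgroups $H\leq\Gamma$---the characteristic property of $E_\mathfrak{Fin}\Gamma$---is exploited.
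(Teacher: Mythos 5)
The paper itself does not prove this lemma --- it is quoted from Mislin--Valette --- so your sketch is to be measured against the standard argument, which it indeed follows in outline (Bredon $\Homol_0$ as the cokernel of the induction differential on a model of $E_\mathfrak{Fin}\Gamma$, plus character theory). However, the step you declare ``formal'' is precisely where your argument breaks. Under the character identification $R_\C(G)\otimes_\Z\C\cong\C[\mathrm{Conj}(G)]$ (indicator functions of conjugacy classes), induction from $H\leq G$ does \emph{not} send the basis class $[x]_H$ to the class of $G$ containing it with coefficient $1$: the induction formula gives
$\mathrm{Ind}_H^G\,\delta_{[x]_H}=\tfrac{|C_G(x)|}{|C_H(x)|}\,\delta_{[x]_G}$.
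Hence your unweighted $\Phi$ does not annihilate the image of $d_1$. Concretely, take an edge $\sigma$ with $\Gamma_\sigma=\langle x\rangle\cong\Z/2$, one endpoint $v_0$ with $\Gamma_{v_0}=\Gamma_\sigma$ and the other endpoint $v_1$ with $\Gamma_{v_1}\cong\Kleinfourgroup$ (or $\Afour$); then $\Phi(d_1\delta_{[x]_{\Gamma_\sigma}})=\pm\bigl(2[x]_\Gamma-[x]_\Gamma\bigr)\neq 0$. Such edges occur in the very complexes of this paper, e.g.\ along a $2$--torsion chain ending in a vertex of type $\Kleinfourgroup$ or $\Afour$ with an adjacent vertex of type $\Z/2$ (the edge $(1)(2)$ in the $m=2$ example). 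The same centraliser ratios infect your injectivity step: the edge relations along a path in $X^{\langle g\rangle}$ identify $|C_{\Gamma_v}(g)|\,\delta_{[g]_{\Gamma_v}}$ with $|C_{\Gamma_w}(g)|\,\delta_{[g]_{\Gamma_w}}$ in the cokernel, not the unweighted classes, so the ``sequence of local relations'' does not literally identify the class of $g$ at $v$ with the class of $h$ at $w$.

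The repair is standard and restores your outline: replace $\Phi$ on each summand by the averaged character map $\chi\mapsto\frac{1}{|G|}\sum_{g\in G}\chi(g)\,[g]_\Gamma$, equivalently weight $\delta_{[x]_G}$ by $1/|C_G(x)|$. A one-line computation with the induction formula shows this is strictly compatible with induction (no scalars appear), so it descends to $\overline{\Phi}$ on $\Homol^\mathfrak{Fin}_0(\Gamma;R_\C)\otimes_\Z\C$. For the inverse, send $[g]_\Gamma$ to the class of $|C_{\Gamma_v}(g)|\,\delta_{[g]_{\Gamma_v}}$ for any vertex $v$ of the (nonempty, connected, indeed contractible) subcomplex $X^{\langle g\rangle}$; well-definedness is then exactly your edge-path argument, combined with the conjugation identifications between the summands of vertices on the same $\Gamma$--orbit, and this also handles fusion phenomena such as $\gamma\sim_\Gamma\gamma^2$ for $3$--torsion. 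Since the weights $|C_{\Gamma_v}(g)|$ are invertible only after tensoring with $\rationals$ or $\C$, this also explains why the statement is asserted for $\Homol^\mathfrak{Fin}_0(\Gamma;R_\C)\otimes_\Z\C$ and not integrally. With these weights inserted, your proof goes through; without them, the two key verifications ($\Phi\circ d_1=0$ and the identification in the cokernel) are false as stated.
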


\section{The cohomology ring isomorphism} \label{ringIsomorphism}

In this section, we prove Theorem~\ref{mainthm}. To this aim, we first prove that there is 
a bijective correspondence between conjugacy classes of elements of finite order in $\Gamma \setminus \{ 1 \}$
and  exceptional prime divisors of the crepant resolution $f\colon Y \to  \Hy^3_\C / \Gamma$.
Here we follow, and we use results from, \cite{IR}, therefore we interpret the aforementioned 
correspondence as a McKay correspondence for complexified Bianchi orbifolds.
In Section~\ref{Phi}, we use this correspondence to define a morphism of graded  vector spaces
$\Phi \colon \Homol^*_{\rm CR}([\Hy^3_\C / \Gamma]) \to \Homol^* (Y)$.
Finally, using a Mayer-Vietoris argument, together with results from \cite{IR} and \cite{Perroni}, we show that $\Phi$ is an isomorphism
and  that it preserves the cup products. 

Throughout this section, $\Gamma$ is a Bianchi group and $[\Hy^3_\C /\Gamma]$
is the corresponding complexified Bianchi orbifold.

\subsection{The singular locus of complexified Bianchi orbifolds and the existence of crepant resolutions}
Let us recall that the singular points of $\Hy^3_\C / \Gamma$ are the image, under the projection
$\Hy^3_\C  \to \Hy^3_\C / \Gamma$, of the points with non-trivial stabilizer.
Moreover, every element  $\gamma \in \Gamma \setminus \{ 1\}$, such that $(\Hy^3_\C)^\gamma
\not= \emptyset$, is a non-trivial rotation of $\Hy^3_\C$ of order 
$2$ or $3$ (see the discussion before Lemma~\ref{rotationAge})
and the fixed-point locus $(\Hy^3_\C)^\gamma$ is a Riemann surface. 
More precisely, we get the following result. 
\begin{lemma}\label{Sigma gamma}
Let $\Sigma \subset \Hy^3_\C / \Gamma$ be the singular locus. Then, the following 
statements hold true.
\begin{itemize}
\item[1)] 
$\Sigma$ is an analytic space of dimension $1$ with finitely many singular points
$x_1, \ldots , x_s$.
\item[2)] 
For any $\gamma \in \Gamma$, let 
$\imath_\gamma \colon (\Hy^3_\C)^\gamma / C_\Gamma (\gamma) \to \Hy^3_\C/\Gamma$
be the morphism induced by the inclusion $(\Hy^3_\C)^\gamma \hookrightarrow \Hy^3_\C$
and let $\Sigma_\gamma := {\rm Im}(\imath_\gamma)$
be the image of $\imath_\gamma$. 
Then, every irreducible component of $\Sigma$ is equal to  
$\Sigma_\gamma \subset \Sigma$,
for some $\gamma \in \Gamma$.  
\item[3)]
For any $\gamma \in \Gamma$, the centralizer $C_\Gamma (\gamma)$ is a normal subgroup 
of $N_\Gamma (\left\langle \gamma \right\rangle)$, 
the normalizer of $\left\langle \gamma \right\rangle$ in~$\Gamma$.
Moreover, $N_\Gamma (\left\langle \gamma \right\rangle) / C_\Gamma (\gamma)$
acts on $(\Hy^3_\C)^\gamma / C_\Gamma (\gamma)$. And
$(\Hy^3_\C)^\gamma / N_\Gamma (\left\langle \gamma \right\rangle)$ is the normalisation 
of $\Sigma_\gamma$.
\item[4)]
Let $\gamma \in \Gamma$. If $\gamma$ has order $2$, or it has order $3$
and it is not conjugated to $\gamma^2$ in $\Gamma$, 
then $C_\Gamma (\gamma) = N_\Gamma (\left\langle \gamma \right\rangle)$.
%and $\iota_\gamma$ induces an isomorphism $(\Hy^3_\C)^\gamma / C_\Gamma (\gamma) \to \Sigma_\gamma$.
If $\gamma$ has order $3$ and it is conjugated to $\gamma^2$  in $\Gamma$,
then $C_\Gamma (\gamma)$ has index $2$ in $N_\Gamma (\left\langle \gamma \right\rangle)$.
%and $\iota_\gamma$ induces a $2$-to-$1$ covering 
%$(\Hy^3_\C)^\gamma / C_\Gamma (\gamma) \to \Sigma_\gamma$.
\end{itemize}  
\end{lemma}
\begin{proof}
1) As observed before, if 
$\gamma \in \Gamma \setminus \{ 1\}$ is such that $(\Hy^3_\C)^\gamma \not= \emptyset$,
then it is a non-trivial rotation. Therefore, $(\Hy^3_\C)^\gamma$ is a Riemann surface
and so $\Sigma$ is an analytic space of dimension $1$. 
The singular points of $\Sigma$ are the image of the points 
$z\in \Hy^3_\C$ with stabilizer not cyclic. As observed in Remark 
\ref{rem spine 2}, such points belong to $\Hy^3_\R$. 
Now, the fact that $\Sigma$  has finitely many singular points follows from the existence 
of a fundamental domain for the action of $\Gamma$ on $\Hy^3_\R$, which is bounded
by finitely many geodesic surfaces (\cite[Thm. 1.1, p. 311]{ElstrodtGrunewaldMennicke}).

2) Is a consequence of the fact that $(\Hy^3_\C)^\gamma$ is  irreducible,
as it is isomorphic to the disk $\Delta = \{ z\in \C \, | \, | z | <1\}$, and the 
image of an irreducible analytic space is irreducible.

3) Let $\eta \in C_\Gamma (\gamma)$, and let $\delta \in N_\Gamma (\left\langle
\gamma \right\rangle)$. Then 
$$
\delta^{-1}\eta \delta \gamma = \delta^{-1}\eta \gamma^k \delta = \delta^{-1}
\gamma^k \eta \delta = \gamma \delta^{-1}
 \eta \delta \, ,
$$
where $k\in \NN$ is such that $\delta \gamma \delta^{-1}= \gamma^k$. 
Hence $\delta^{-1}\eta \delta \in C_\Gamma (\gamma)$ and so 
$C_\Gamma (\gamma)$ is a normal subgroup of $N_\Gamma (\left\langle
\gamma \right\rangle)$. \\
The natural action of $N_\Gamma (\left\langle \gamma \right\rangle)$ 
on $(\Hy^3_\C)^\gamma$ is properly discontinuous, hence every point has finite stabilizer.
From this it follows that  $(\Hy^3_\C)^\gamma / N_\Gamma (\left\langle \gamma \right\rangle)$ is a normal analytic space.
Furthermore, let $z, z' \in (\Hy^3_\C)^\gamma$ be two points that are 
mapped to the same point  $x\in \Hy^3_\C / \Gamma$,
and  suppose  that $x$ is a smooth point of $\Sigma$.
Under these hypotheses, ${\rm Stab}_\Gamma ( z ) = {\rm Stab}_\Gamma ( z' ) 
=\left\langle \gamma \right\rangle$ and so, if $g\in \Gamma$ is such that $g\cdot z = z'$,
we know that $g\left\langle \gamma \right\rangle g^{-1} = \left\langle \gamma \right\rangle$, that is $g\in N_\Gamma (\left\langle \gamma \right\rangle)$.  
This implies that $\imath_\gamma$ induces a birational map 
between $(\Hy^3_\C)^\gamma / N_\Gamma (\left\langle \gamma \right\rangle)$
and $\Sigma_\gamma$, hence 3) follows.

To prove 4), let us consider the  action of  
$N_\Gamma(\left\langle \gamma \right\rangle)/C_\Gamma (\gamma)$
on $\left\langle \gamma \right\rangle \setminus \{ 1 \}$ given by conjugation. 
If $\gamma$ has order $2$, or it has order $3$ and it is not conjugated to $\gamma^2$,
then this action is trivial, hence $C_\Gamma (\gamma) = N_\Gamma(\left\langle \gamma \right\rangle)$.
In the remaining case, the orbit of $\gamma$ has two elements, so the result follows. 
\end{proof}

The existence of a crepant resolution of $\Hy^3_\C/\Gamma$ follows from \cite{Roan} (see also \cite{ChenTseng}),
since $\Hy^3_\C/\Gamma$ has Gorenstein singularities (Lemma~\ref{stab in SL3}). 
For later use, and to fix notations, we briefly review its construction. 
Under the notation of Lemma~\ref{Sigma gamma}, let $x_1, \ldots , x_s \in \Sigma$ be the singular points 
of $\Sigma$ (the singular locus of $\Hy^3_\C/\Gamma$).
By Lemma~\ref{stab in SL3}, there are  disjoint open neighborhoods
$U_1, \ldots, U_s \subset \Hy^3_\C/\Gamma$
of $x_1, \ldots , x_s$, respectively, each of them isomorphic to the quotient
of an open neighborhood of the origin in $\C^3$ by a finite subgroup of
$SL_3 (\C)$. Therefore, for any $i=1, \ldots , s$, there exists
a crepant resolution $f_i \colon V_i \to U_i$ of  $U_i$. 

Let $X:= (\Hy^3_\C/\Gamma ) \setminus \{ x_1, \ldots , x_s \}$ be the complement 
of $x_1, \ldots , x_s$. It is an analytic space with {\bf transverse singularities of type $A$}.
That is, every singular point $x\in X$ has a neighborhood isomorphic to
a neighborhood of a singular point of $\{ (u,v,w) \in \C^3 \, | \, w^{n+1}= uv\}
\times \C^{d-2}$, for some integer $n\geq 1$, where $d=\dim ( X )$ 
is equal to $3$ in our case. 
Notice that $n$ is constant on each connected component $C$ of the singular 
locus of $X$, hence we say that $X$ has transverse singularities of type $A_n$
on $C$. 

Every analytic space with transverse singularities of type $A$
admits a unique crepant resolution (see e.g. \cite[Prop. 4.2]{Perroni}),
up to canonical isomorphism.
So, let $f_0 \colon V \to X$ be a crepant resolution of $X$. 
By  uniqueness,  the restriction 
of $f_0 \colon V \to X$ to $U_i \setminus \{ x_i\}$ is canonically isomorphic to 
the restriction of $f_i \colon V_i \to U_i$ to  $U_i \setminus \{ x_i\}$, $\forall i=1, \ldots , s$.
Therefore, $f_0, f_1, \ldots , f_s$ can be glued together, yielding a crepant resolution 
$f\colon Y \to \Hy^3_\C/\Gamma$. 

\subsection{McKay correspondence for complexified Bianchi orbifolds}
In this section, we prove that there is a natural one-to-one correspondence 
between conjugacy classes of elements of finite order of $\Gamma \setminus \{ 1 \}$
and exceptional prime divisors of the crepant resolution $f\colon Y \to \Hy^3_\C/\Gamma$.
Let us recall that the authors of~\cite{IR} 
define a natural bijective correspondence between conjugacy classes of junior elements 
of $G\setminus \{ 1 \}$ (here, $G$ is a finite subgroup of $SL_n(\C)$) and exceptional prime divisors 
of a minimal model of $\C^n /G$ (if $f\colon Y \to \C^n /G$ is a crepant resolution, then
$Y$ is a minimal model of $\C^n /G$). This result has been interpreted, and extended in several directions, 
using derived categories in \cite{BridgelandKingReid} and \cite{ChenTseng}.

We will need some general facts about analytic spaces with transverse 
singularities of type $A$, which we briefly recall.
Let $M$ be a complex manifold with an effective and properly discontinuous action 
of a discrete group $\Gamma$, and let $X=M/\Gamma$
be the quotient space. For $\gamma \in \Gamma$, let $C\subset X$ 
be the image under the quotient map 
$M\to M/\Gamma$ of the fixed-point locus $M^\gamma$.
Let us suppose that $X$ has transverse singularities  of type $A_n$ on $C$.
In particular, the stabilizer of any point 
$z\in M^\gamma$ is $\left\langle \gamma \right\rangle \cong \ZZ/(n+1)\ZZ$,
so two points $z, z' \in M^\gamma$ are identified by the projection  $M\to M/\Gamma$
if and only if they are on the same $N_\Gamma (\left\langle \gamma \right\rangle)$-orbit
(see the proof of Lemma~\ref{Sigma gamma}, 3), where 
$N_\Gamma (\left\langle \gamma \right\rangle)$ is the normalizer 
of $\left\langle \gamma \right\rangle$ in $\Gamma$. 
\begin{note}
We observe that, in this situation, for any $g\in N_\Gamma (\left\langle \gamma \right\rangle)$,
$g\gamma g^{-1} = \gamma^{\pm 1}$. 
\end{note}
\begin{proof}
Let us consider the normal bundle of $M^\gamma$ in $M$, $N_{M^\gamma / M}$.
The group $\left\langle \gamma \right\rangle$ acts fibrewise on $N_{M^\gamma / M}$,
so we have a splitting $N_{M^\gamma / M} = (N_{M^\gamma / M})^{\chi} \oplus 
(N_{M^\gamma / M})^{\chi^{-1}}$, where $(N_{M^\gamma / M})^{\chi^{\pm 1}}$
is the sub-bundle  of $N_{M^\gamma / M}$ 
where $\left\langle \gamma \right\rangle$ acts as multiplication by the character 
$\chi^{\pm 1}$, and $\chi$ is a generator of the group of characters of 
$\left\langle \gamma \right\rangle$. Assume that $g\gamma g^{-1} = \gamma^k$, 
and let $z\in M^\gamma$, $z':=g\cdot z$. Then, the tangent map
of $g$ at $z$, $T_z g$, induces an isomorphism 
\begin{equation}\label{Tg intertwines}
N_{M^\gamma / M}(z) \cong N_{M^\gamma / M}(z')
\end{equation}
between the fibre of $N_{M^\gamma / M}$ at $z$
and that at $z'$. Since $T_z g \circ T_z \gamma = T_z(g \circ \gamma ) =
T_{z'} \gamma^k \circ T_z g$, the  isomorphism \eqref{Tg intertwines}
yields an isomorphism between the following representations of $\left\langle \gamma \right\rangle$: 
$\left\langle \gamma \right\rangle \to GL(N_{M^\gamma / M}(z))$, $\gamma \mapsto
T_z \gamma$, and   $\left\langle \gamma \right\rangle \to GL(N_{M^\gamma / M}(z'))$, $\gamma \mapsto
T_{z'} \gamma^k$. But the last representation is the direct sum 
of the  irreducible representations of $\left\langle \gamma \right\rangle$ having 
characters $\chi^k$ and $\chi^{-k}$, so $k \equiv \pm 1 \, ({\rm mod} \, n+1)$. 
\end{proof}
We say that $X$ has {\bf transverse singularities of type $A_n$ on $C$
and non-trivial monodromy}, if $\gamma$ is conjugated to $\gamma^{-1}$
in $\Gamma$. Otherwise we say that $X$ has {\bf transverse singularities of type $A_n$ on 
$C$
and trivial monodromy}. We refer to \cite[Section 3.1]{Perroni} for an equivalent definition
of the monodromy. 
Notice also that in \cite{Perroni} the monodromy is referred to a suitable neighbourhood 
of $[M^\gamma / N_{\Gamma}(\left\langle \gamma \right\rangle)]$ in the  orbifold 
$[M/\Gamma]$. However, by \cite[Prop. 2.9]{Perroni}, such an orbifold structure
is determined uniquely by $X$.

%The normal bundle of $M^\gamma$ in $M$, $N_{M^\gamma / M}$,
%carries a fiberwise action of $\left\langle \gamma \right\rangle$, so it splits 
%as direct sum of two line bundles, $N_{M^\gamma / M} \cong \mathbb{L} \oplus \mathbb{M}$,
%where $\mathbb{L}$ is the eigenbundle where $\left\langle \gamma \right\rangle$
%acts with the character $\chi$ and $\mathbb{M}$
%is the eigenbundle where $\left\langle \gamma \right\rangle$
%acts with the character $\chi^{-1}$, $\chi$ is a generator of the character group
%of $\left\langle \gamma \right\rangle$.

Let now   $\tilde{U}$ be a  neighborhood
of $M^\gamma$ in $M$ that is isomorphic to a neighborhood of the 
$0$-section of $N_{M^\gamma / M}$ (i.e. a tubular neighborhood 
of $M^\gamma$ in $M$). The natural action of  $N_\Gamma (\left\langle \gamma \right\rangle)$
on $N_{M^\gamma / M}$ induces an action of 
$N_\Gamma (\left\langle \gamma \right\rangle)$ on 
$\tilde{U}$, such that $\tilde{U}/ N_\Gamma (\left\langle \gamma \right\rangle)$
is an open neighborhood of $C$ in $X$.
Moreover, if $X$ has non-trivial monodromy on $C$, then 
$\tilde{U}/C_\Gamma(\gamma)$ is an analytic space with transverse
singularities of type $A_n$  on $M^\gamma/C_\Gamma(\gamma)$
and trivial monodromy,
and the natural map $\tilde{U}/C_\Gamma(\gamma) \to
\tilde{U}/N_\Gamma (\left\langle \gamma \right\rangle)$ is a two-to-one topological covering
(this is analogous to \cite[Cor. 3.6]{Perroni}).

We summarise in the following proposition 
the previous considerations, in the case of complexified Bianchi orbifolds.
\begin{proposition}\label{Bianchi orbifolds monodromy}
Let $\Gamma$ be a Bianchi group, let $\Sigma$
be the singular locus of $\Hy^3_\C / \Gamma$, and let $x_1, \ldots , x_s$
be the singular points of $\Sigma$. Let $X:= (\Hy^3_\C / \Gamma) \setminus \{ 
x_1, \ldots , x_s\}$. Then, the following holds true.
\begin{itemize}
\item[1)]
For every connected component $C$ of $\Sigma \setminus \{ x_1, \ldots , x_s\}$,
$X$ has transverse singularities of type $A_n$ on $C$, with $n \in \{1,2\}$.
\item[2)]
If $C$ is contained in the image of $(\Hy^3_\C)^\gamma$ with
$n=2$ and $\gamma$ is not conjugated to $\gamma^2$, or with $n=1$, 
then $X$ has trivial monodromy on $C$.
\item[3)]
If $n=2$ and $\gamma$ is conjugated to $\gamma^2$, then $X$ has non-trivial
monodromy on $C$. Furthermore, using the same notation as  before, 
$\tilde{U}/C_\Gamma (\gamma)$ is an analytic space with transverse singularities 
of type $A_2$ and trivial monodromy; the map $\tilde{U}/C_\Gamma (\gamma)
\to \tilde{U}/N_\Gamma (\left\langle \gamma\right\rangle)$ is a two-to-one topological covering.      
\end{itemize}
\end{proposition}

In the following proposition, we establish a McKay correspondence 
for complexified Bianchi orbifolds following \cite{IR}.
\begin{proposition}\label{McKayforBianchi}
Let $\Gamma$ be a Bianchi group, and let $f\colon Y \to \Hy^3_\C / \Gamma$
be a crepant resolution. Then, there is a one-to-one correspondence 
between conjugacy classes of elements of finite order of $\Gamma \setminus \{ 1 \}$
and exceptional prime divisors of $f$.
\end{proposition}
\begin{proof}
Let $\gamma \in \Gamma \setminus \{ 1 \}$ be an element of finite 
order. Then $\gamma$ is an elliptic element of
${\rm PSL}_2(\C)$ (\cite[Def. 1.3, p. 34]{ElstrodtGrunewaldMennicke})
and so  $(\Hy^3_\C)^\gamma \not= \emptyset$ 
(\cite[Prop. 1.4, p. 34]{ElstrodtGrunewaldMennicke}).
By Lemma~\ref{rotationAge}, the degree shifting number of $\gamma$ is $1$,
in other words, in the notation of \cite{IR}, 
$\gamma$ is a junior element of ${\rm Stab}_\Gamma ( z )$,
for any $z\in (\Hy^3_\C)^\gamma$.

As observed in Lemma~\ref{Sigma gamma},
the image of $(\Hy^3_\C)^\gamma$ in $\Hy^3_\C / \Gamma$
is an irreducible component $\Sigma_\gamma$ of the singular 
locus $\Sigma$ of $\Hy^3_\C / \Gamma$.

Now we distinguish two cases.\\
{\bf Case 1:} $\gamma$ has order $2$, or it has order $3$ and is not conjugated 
to $\gamma^2$ in $\Gamma$. \\
Then, there are open subsets $U_1, \ldots , U_r \subset \Hy^3_\C / \Gamma$
of the form  $U_i = \tilde{U}_i / G_i$, where $\tilde{U}_i \subset \Hy^3_\C$
is an open neighbourhood of a point $z\in (\Hy^3_\C)^\gamma$,
$G_i = {\rm Stab}_\Gamma ( z )$, for $i=1, \ldots , r$,
and such that $\Sigma_\gamma \subset \cup_{i=1}^r U_i$.
Notice that in this case, the conjugacy class of $\gamma \in G_i$
consists only of $\gamma$, so by \cite[Cor. 1.5]{IR},  $\gamma$   
corresponds to an exceptional prime divisor $E_{\gamma, i}$
of the restriction $f_{|f^{-1}(U_i)} \colon f^{-1}(U_i) \to U_i$,
for any $i=1, \ldots , r$ (as observed before, $\gamma$ is a junior element 
of $G_i$). Moreover, by the definition of 
the $E_{\gamma, i}$'s (see \cite{IR}), it follows that on  
$f^{-1}(U_i \cap U_j)$, the divisors $E_{\gamma, i}$ and $E_{\gamma, j}$
coincide, so they glue together to form an exceptional prime 
divisor $E_\gamma \subset Y$ of $f$.

{\bf Case 2:} $\gamma$ has order  $3$ and is  conjugated 
to $\gamma^2$ in $\Gamma$. \\
Let now $C \subset \Sigma_\gamma$ be the complement 
in $\Sigma_\gamma$ of the singular points of $\Sigma$.
By Proposition~\ref{Bianchi orbifolds monodromy}, there is an open subset 
$\tilde{U} \subset \Hy^3_\C$, with an action of $N_\Gamma (\left\langle \gamma \right\rangle)$,
such that $\tilde{U} / N_\Gamma (\left\langle \gamma \right\rangle) \subset 
\Hy^3_\C / \Gamma$ is an open neighbourhood of $C$
and 
$\tilde{U} / C_\Gamma (\gamma) \to 
\tilde{U} / N_\Gamma (\left\langle \gamma \right\rangle)$ is a two-to-one topological covering.
Furthermore, $\tilde{U} / C_\Gamma (\gamma)$ is an analytic space with transverse
singularities of type $A_2$ and trivial monodromy. 
Let $\tilde{V} \to \tilde{U} / C_\Gamma (\gamma)$ be a crepant resolution,
then by the uniqueness of the crepant resolution for spaces with transverse 
singularities of type $A$, there is a   
morphism 
\begin{equation}\label{vtildetoy}
\tilde{V} \to f^{-1} \left(\tilde{U} / N_\Gamma (\left\langle \gamma \right\rangle)\right) \, ,
\end{equation}
such that the following diagram commutes:
$$
\begin{CD}
\tilde{V} @>>> f^{-1} \left((\tilde{U} / N_\Gamma (\left\langle \gamma \right\rangle)\right) \\
@VVV @VV{f_|}V \\
\tilde{U} / C_\Gamma (\gamma) @>>> \tilde{U} / N_\Gamma (\left\langle \gamma \right\rangle) \, .
\end{CD}
$$
From Case 1, $\gamma$ corresponds to an exceptional prime 
divisor $F_\gamma$ of $\tilde{V} \to \tilde{U} / C_\Gamma (\gamma)$.
Let us denote by $E_{\gamma,0} \subset Y$ the image of $F_\gamma$
under the morphism \eqref{vtildetoy}.

In order to extend $E_{\gamma, 0}$ over the whole $\Sigma_\gamma$,
let $W\subset \Hy^3_\C/\Gamma$ be a (possibly disconnected) neighborhood
of $\Sigma_\gamma \setminus C$, such that each connected component 
is of the form $\tilde{W}/G$, where $\tilde{W} \subset \Hy^3_\C$
is an open neighborhood of a point $z\in (\Hy^3_\C)^\gamma$, and 
$G={\rm Stab}_\Gamma ( z )$. By \cite[Cor. 1.5]{IR}, 
(the conjugacy class of) $\gamma$ corresponds to an exceptional  prime divisor
$E'_\gamma \subset f^{-1}(W)$. By construction, $E_{\gamma, 0}$
and $E'_\gamma$ glue together to form an exceptional prime divisor 
$E_\gamma$ of $f\colon Y \to \Hy^3_\C/\Gamma$.  \\
Notice that if we apply the same procedure starting from 
$\gamma^2=\gamma^{-1}$,  we obtain the same divisor $E_\gamma$.
This concludes the proof of the proposition. 
\end{proof}

\subsection{The linear map}\label{Phi}
Let $\Gamma$ be a Bianchi group, and let $f\colon Y \to \Hy^3_\C / \Gamma$ 
be a crepant resolution of $\Hy^3_\C /\Gamma$.
In this section, we define a linear map 
\begin{eqnarray*}
\Phi \colon \Homol^*_{\rm CR}([\Hy^3_\C / \Gamma], \QQ) \to \Homol^* (Y, \QQ) \, .
\end{eqnarray*}
To this aim, let us fix the following presentation of the Chen-Ruan orbifold cohomology of 
$[\Hy^3_\C / \Gamma]$ (cf. Definition \ref{HCR}):
\begin{equation}\label{HCRbis}
 \Homol^*_{\rm CR}([\Hy^3_\C / \Gamma], \QQ) = 
\oplus_{\gamma \in T} \Homol^{*-2{\rm shift}(\gamma)} 
\left( (\Hy^3_{\C})^\gamma / C_\Gamma (\gamma) , \QQ \right)
\end{equation}
where $T\subset \Gamma$ is a set of representatives of the 
conjugacy classes of elements of finite order of $\Gamma$.
Then $\Phi$ is defined as the sum of linear maps 
\begin{equation}\label{HCRter}
\Phi_\gamma \colon \Homol^{*-2{\rm shift}(\gamma)} 
\left( (\Hy^3_{\C})^\gamma / C_\Gamma (\gamma) , \QQ \right)
 \to \Homol^* (Y, \QQ) \, , \quad {\rm for} \quad  \gamma \in T \, .
\end{equation}

If $\gamma = 1$, then we define $\Phi_1 := f^* \colon 
\Homol^{*}
\left( \Hy^3_{\C}/ \Gamma  , \QQ \right)
 \to \Homol^* (Y, \QQ)$. Let now $\gamma \in T\setminus \{ 1 \}$,
 and consider the following Cartesian diagram 
$$
\begin{CD}
\tilde{E}_\gamma @>{\tilde{\jmath}_\gamma}>> E_\gamma \\
@V{\pi}VV @VV{f_{|E_\gamma}}V \\
(\Hy^3_{\C})^\gamma / C_\Gamma (\gamma) @>{\imath_\gamma}>> 
\Hy^3_{\C}/ \Gamma \, , 
\end{CD}
$$
where $\imath_\gamma$ 
is the morphism induced by the inclusion $(\Hy^3_{\C})^\gamma \hookrightarrow
\Hy^3_{\C}$, and $E_\gamma \subset Y$ is the exceptional prime divisor 
corresponding to the class of $\gamma$ by Proposition~\ref{McKayforBianchi}.
Let $\jmath_\gamma \colon \tilde{E}_\gamma \to Y$ be the composition of 
$\tilde{\jmath}_\gamma  \colon \tilde{E}_\gamma \to E_\gamma$ 
followed by the inclusion $E_\gamma \hookrightarrow Y$. Notice that $\jmath_\gamma$ is proper
since $\imath_\gamma$ is so (Lemma~\ref{ProperTwistedSectors}). Then we define
\begin{equation}\label{HCRter'}
\Phi_\gamma (\alpha):= (\jmath_\gamma )_* (\pi^* (\alpha)) \, , \quad 
\forall \alpha \in \Homol^{*-2{\rm shift}(\gamma)} 
\left( (\Hy^3_{\C})^\gamma / C_\Gamma (\gamma) , \QQ \right) \, .
\end{equation}
Where, $\forall \beta \in \Homol^*(\tilde{E}_\gamma, \QQ)$,
$(\jmath_\gamma)_*(\beta) \in \Homol^{*+2}(Y, \QQ)$ is the cohomology class
that corresponds via Poincar\'e duality (cf. \cite[Chapter XIV]{Massey}) to the following element of
$\Homol_c^{4-*}(Y, \QQ)^\vee$ (the dual space of the cohomology 
of $Y$ with compact support):
\begin{equation}\label{pushforward}
\omega \in \Homol_c^{4-*}(Y, \QQ) \mapsto 
\int_{\tilde{E}_\gamma} \beta \cup \jmath_\gamma^* (\omega) \, .
\end{equation}
\begin{remark}\normalfont
In \eqref{pushforward}, $\tilde{E}_\gamma$ is a complex analytic space 
of real dimension $4$ (it is a divisor of $Y$).
If it is singular,  by the integral  
$\int_{\tilde{E}_\gamma} \beta \cup \jmath_\gamma^* (\omega)$
we mean the integral of the pull-back of $\beta \cup \jmath_\gamma^* (\omega)$
on a resolution of the singularities of $\tilde{E}_\gamma$ 
(which is a complex manifold and hence it has a natural orientation). 
Notice that this does not depend on the particular resolution of 
$\tilde{E}_\gamma$. If 
$\rho' \colon \tilde{E}_\gamma' \to \tilde{E}_\gamma$ and 
$\rho'' \colon \tilde{E}_\gamma'' \to \tilde{E}_\gamma$ are two  resolutions 
of $\tilde{E}_\gamma$, then there exists a third resolution 
$\tilde{E}_\gamma'''$, with two morphisms 
$\rho_1 \colon \tilde{E}_\gamma''' \to \tilde{E}_\gamma'$,
$\rho_2 \colon \tilde{E}_\gamma''' \to \tilde{E}_\gamma''$, such that 
$\rho' \circ \rho_1 = \rho'' \circ \rho_2$. 
One can take, for example, $\tilde{E}_\gamma'''$ to be a resolution of the Cartesian product 
$\tilde{E}_\gamma' \underset{\rho' , \tilde{E}_\gamma, \rho''} \times \tilde{E}_\gamma''$.
In particular, $\tilde{E}_\gamma'''$ differs from $\tilde{E}_\gamma'$
($\tilde{E}_\gamma''$, respectively) by a closed analytic subspace of (complex) codimension $\geq 1$,
which has measure zero and so the integral in \eqref{pushforward}
does not depend on the resolution of  $\tilde{E}_\gamma$.
\end{remark}

Let us first notice that $\Phi$ is degree preserving, since any $\gamma \in \Gamma \setminus \{ 1 \}$
has ${\rm shift}(\gamma) =1$, and $(\jmath_\gamma)_* \colon \Homol^*(\tilde{E}_\gamma, \QQ) \to \Homol^{*+2}(Y, \QQ)$
increases the degrees by two (the real codimension of $\tilde{E}_\gamma$ in $Y$). 

In the proof of Theorem~\ref{mainthm} we will use a compatibility property of $\Phi$
with respect to open embeddings, as follows.
Let $U\subset \Hy^3_\C /\Gamma$ be an open subset, and let
$\tilde{U}\subset \Hy^3_\C$ be the pre-image of $U$ with respect to the quotient map $\Hy^3_\C \to \Hy^3_\C /\Gamma$.
Then the action of $\Gamma$ on $\Hy_\C^3$ restricts to an action on $\tilde{U}$, in such  a way that 
$[\tilde{U}/\Gamma]$ is an open sub-orbifold of $[\Hy^3_\C/\Gamma]$. 
The same definition of $\Phi$ gives a linear map
$$
\Phi^U \colon \Homol^*_{\rm CR}([\tilde{U} / \Gamma], \QQ) \to \Homol^* (f^{-1}(U), \QQ) \, .
$$  
\begin{lemma}\label{Phisheaf}
Under the previous notation, let $i\colon [\tilde{U}/\Gamma] \hookrightarrow [\Hy^3_\C /\Gamma]$ 
and  $j\colon f^{-1}(U) \hookrightarrow Y$ be the open inclusions.
Then
$$
\Phi^U \circ i^* = j^* \circ \Phi \, .
$$
\end{lemma}
\begin{proof}
It suffices to prove that $\Phi^U_\gamma \circ i^* = j^* \circ \Phi_\gamma$, for any $\gamma \in T$
of finite order, where $\Phi^U_\gamma$ and $\Phi_\gamma$ are defined as in \eqref{HCRter'}. 
If $\gamma =1$  the claim follows by the functoriality property of the pull-back. So, let us assume that 
$\gamma \not= 1$ and consider the following commutative diagram.
$$
\xymatrix{
\tilde{E}^U_\gamma \ar[d]_{\pi_{|}} \ar[r]^{\jmath^U_\gamma} \ar[rd]^{\tilde{i}} & f^{-1}(U) \ar[rd]^j &\\
\tilde{U}^\gamma / C_\Gamma (\gamma) \ar[rd]^i & \tilde{E}_\gamma \ar[d]^\pi \ar[r]^{\jmath_\gamma} & Y \\
& (\Hy^3_{\C})^\gamma / C_\Gamma (\gamma) & } 
$$
where, by abuse of notation, we have denoted with $i \colon \tilde{U}^\gamma / C_\Gamma (\gamma)  \to (\Hy^3_{\C})^\gamma / C_\Gamma (\gamma)$
the map induced by the inclusion $i\colon [\tilde{U}/\Gamma] \hookrightarrow [\Hy^3_\C /\Gamma]$;
$\pi$, $\tilde{E}_\gamma$ and $\jmath_\gamma$ are defined as in the definition of $\Phi$;
$\tilde{E}^U_\gamma := \pi^{-1}(\tilde{U}^\gamma / C_\Gamma (\gamma))$, $\pi_{|}$ is the restriction of $\pi$,
$\tilde{i}$ is the open inclusion, and $\jmath_\gamma^U$ is the restriction of $\jmath_\gamma$.
The result follows, if we prove that $(\jmath^U_\gamma)_* \circ \tilde{i}^* = j^* \circ (\jmath_\gamma)_*$.
So, let $\beta \in \Homol^* (\tilde{E}_\gamma)$. Then, $\forall \delta \in \Homol_c^*(f^{-1}(U))$, we get:
\begin{eqnarray*}
\int_{f^{-1}(U)} [(\jmath^U_\gamma)_* \circ \tilde{i}^*](\beta) \cup \delta &=& \int_{\tilde{E}^U_\gamma} \tilde{i}^*(\beta) \cup (\jmath^U_\gamma)^* (\delta) \, .
\end{eqnarray*}
On the other hand, there exists $\tilde{\delta} \in \Homol_c^*(Y)$ such that $\delta = j^*\tilde{\delta}$
(this follows from the excision property \cite[pp. 320, 362, 363]{Massey}).
Therefore, 
\begin{eqnarray*}
\int_{\tilde{E}^U_\gamma} \tilde{i}^*(\beta) \cup (\jmath^U_\gamma)^* (\delta) &=&
\int_{\tilde{E}^U_\gamma} \tilde{i}^*(\beta) \cup (\jmath^U_\gamma)^* (j^*\tilde{\delta}) \\
&=& \int_{\tilde{E}^U_\gamma} \tilde{i}^*(\beta) \cup
\tilde{i}^*(\jmath_\gamma^* (\tilde{\delta})) \\
&=& \int_{\tilde{E}^U_\gamma} \tilde{i}^*[\beta \cup \jmath_\gamma^* (\tilde{\delta})] \\
&=& \int_{\tilde{E}_\gamma} \beta \cup \jmath_\gamma^* (\tilde{\delta})\\
&=& \int_Y (\jmath_\gamma)_*(\beta) \cup \tilde{\delta} = \int_{f^{-1}(U)} [j^* \circ (\jmath_\gamma)_*](\beta) \cup \delta \, .
\end{eqnarray*} 
Using  Poincar\'e duality, we conclude that $[(\jmath^U_\gamma)_* \circ \tilde{i}^*](\beta) = [j^* \circ (\jmath_\gamma)_*](\beta)$.
\end{proof}

\subsection{Proof of Theorem~\ref{mainthm}}
Here we prove that the map $\Phi$ defined in the previous section is an isomorphism
of graded $\QQ$-algebras. Our approach has been inspired by \cite{ChenTseng}.

First of all, we prove the following result.
\begin{proposition} \label{isomorphism of vector spaces}
The linear map $\Phi \colon \Homol^*_{\rm CR}([\Hy^3_\C / \Gamma], \QQ) \to \Homol^* (Y, \QQ)$ defined in 
the previous section is an isomorphism of vector spaces.
\end{proposition}
\begin{proof} 
We use the Mayer-Vietoris  exact sequence for  Chen-Ruan orbifold cohomology.
We will define an appropriate open covering of the orbifold  $[\Hy^3_\C/\Gamma]$. This induces 
an open covering of the inertia orbifold. Since the Chen-Ruan orbifold cohomology is the 
usual cohomology of the inertia orbifold, we have a Mayer-Vietoris long exact sequence. 

The open covering is defined as follows.
As before, let $x_1, \ldots , x_s \in \Hy^3_\C/\Gamma$ be the singular points of the singular locus $\Sigma$
of $\Hy^3_\C/\Gamma$, and let $X: = (\Hy^3_\C/\Gamma ) \setminus \{ x_1, \ldots , x_s\}$.
Then there is a unique open sub-orbifold $\mathcal{X} \subset [\Hy^3_\C/\Gamma]$
having   $X$ as coarse moduli space. Notice that $X$ is an analytic space with transverse singularities of type A.
Let now, for any $i=1, \ldots , s$, $W_i \subset \Hy^3_\C/\Gamma$ be an open neighbourhood of 
$x_i$ isomorphic to $\tilde{W_i}/G_i$, where $\tilde{W_i}$ is an open subset of $\Hy^3_\C$
isomorphic to an open ball, and $G_i$ is the stabilizer of a point $z_i\in \tilde{W_i}$
that maps onto $x_i$ under the quotient map $\Hy^3_\C \to \Hy^3_\C/\Gamma$.
Without loss of generality, we suppose that $W_1, \ldots , W_s$ are pairwise 
disjoint. Then $\mathcal{W}:= \sqcup_{i=1}^s[\tilde{W_i}/G_i]$ is an open sub-orbifold 
of $[\Hy^3_\C/\Gamma]$. Let us denote with $W:= \sqcup_{i=1}^s W_i$ the coarse moduli space 
of $\mathcal{W}$. 

Let us consider the open covering  $\{ \mathcal{X}, \mathcal{W}\}$ of $[\Hy^3_\C/\Gamma]$,  
the open covering $\{ f^{-1}(X), f^{-1}(W)\}$ of $Y$ and the corresponding long exact cohomology sequences
of Mayer-Vietoris. By Lemma~\ref{Phisheaf},   $\Phi$ induces a morphism between long exact 
sequences as follows: \small
$$
\xymatrix{ 
\Homol^{k-1}(f^{-1}(X) \cap f^{-1}(W)) \ar[r] & \Homol^{k}(Y) \ar[r] &
\Homol^{k}(f^{-1}(X)) \oplus  \Homol^k(f^{-1}(W)) \ar[r] &\Homol^{k}(f^{-1}(X) \cap f^{-1}(W))  \\
 \Homol^{k-1}_{\rm CR}(\mathcal{X} \cap \mathcal{W}) \ar[u]^{\Phi^{X\cap W}} \ar[r] 
& \Homol^{k}_{\rm CR}([\Hy^3_\C/\Gamma]) \ar[u]^\Phi \ar[r] & 
\Homol^{k}_{\rm CR}(\mathcal{X}) \oplus \Homol^k_{\rm CR}(\mathcal{W}) \ar[u]^{\Phi^X \oplus \Phi^W} \ar[r] &
\Homol^{k}_{\rm CR}(\mathcal{X} \cap \mathcal{W}) \ar[u]^{\Phi^{X\cap W}}
}
$$ \normalsize
The map $\Phi^W$ is an isomorphism  by \cite{IR}. Therefore, the Proposition follows from the five-lemma
if $\Phi^X$ and $\Phi^{X\cap W}$ are isomorphisms. To see that they are isomorphisms, recall that  
$X$ and $X\cap W$ are analytic spaces with transverse singularities of type A. 
Therefore, $\Phi^X$ and $\Phi^{X\cap W}$ are isomorphisms if the monodromy is trivial (\cite[Prop. 4.8 and 4.9]{Perroni}).
On the other hand, if the monodromy is not trivial, then there is an unramified double covering 
$\tilde{\mathcal{X}} \to \mathcal{X}$ such that $\tilde{\mathcal{X}}$ has transverse singularities of type $A$
and trivial monodromy (Proposition~\ref{Bianchi orbifolds monodromy}). Let $\widetilde{f^{-1}(X)} \to \tilde{X}$ be the crepant resolution of $\tilde{X}$
(the coarse moduli space of $\tilde{\mathcal{X}}$). Then there is a natural map $\widetilde{f^{-1}(X)} \to f^{-1}(X)$,
which is an unramified double covering. Since $\Homol^{*}_{\rm CR}(\mathcal{X}) \cong \Homol^{*}_{\rm CR}(\tilde{\mathcal{X}})^{\ZZ/2\ZZ}$
(\cite[Prop. 3.13]{Perroni}) and $\Homol^*(f^{-1}(X)) \cong \Homol^*\left( \widetilde{f^{-1}(X)} \right)^{\ZZ/2\ZZ}$,
we conclude that $\Phi^X$ is an isomorphism.   The same proof works for $\Phi^{X\cap W}$.
\end{proof}

The proof of Theorem~\ref{mainthm} is now completed when combining Proposition~\ref{isomorphism of vector spaces} with the following statement.
\begin{proposition}
$\Phi \colon (\Homol^*_{\rm CR}([\Hy^3_\C / \Gamma], \QQ) , \cup_{\rm CR}) \to (\Homol^* (Y, \QQ), \cup)$ 
is a ring homomorphism.
\end{proposition}
\begin{proof} 
Notice that on the non-twisted sector, $\Phi$
preserves the cup products because 
$f^*$ is a ring homomorphism.  So let $\alpha_g , \beta_h$
be cohomology classes of the twisted sectors 
$(\Hy^3_{\C})^g / C_\Gamma (g), (\Hy^3_{\C})^h / C_\Gamma (h)$. Since ${\rm shift}(g)
={\rm shift}(h)=1$, the Chen--Ruan degrees $\deg (\alpha_g), \deg (\beta_h)$
are $\geq 2$, hence $\deg \left( \alpha_g \cup_{\rm CR} \beta_h \right) \geq 4$.
By Theorem~\ref{spine}, we conclude that 
$\Homol^d_{\rm CR}([\Hy^3_\C / \Gamma])=0$ if $d\geq 4$; so
 $\alpha_g \cup_{\rm CR} \beta_h =0$.
On the other hand, since $\Phi$ is grading preserving, 
$\deg \left( \Phi (\alpha_g ) \cup \Phi (\beta_h) \right) \geq 4$,
so also $\Phi (\alpha_g ) \cup \Phi (\beta_h) =0$ because 
$\Homol^d (Y) \cong \Homol^d_{\rm CR}([\Hy^3_\C / \Gamma])$, for any  $d$.
Finally, let $\alpha_g \in \Homol^*((\Hy^3_{\C})^g / C_\Gamma (g))$
and $\beta \in \Homol^*(\Hy^3_{\C}/\Gamma)$.
Then, $\alpha_g \cup_{\rm CR} \beta = \alpha_g \cup \imath_g^* \beta \in 
\Homol^*((\Hy^3_{\C})^g / C_\Gamma (g))$, so
$\Phi \left( \alpha_g \cup_{\rm CR} \beta \right) = (\jmath_g)_* \pi^* \left( \alpha_g \cup 
\imath_g^* \beta \right)$. On the other hand,
\begin{eqnarray*}
\Phi (\alpha_g) \cup \Phi (\beta) &=& (\jmath_g)_* \pi^* (\alpha_g) \cup f^* (\beta) \\
&=& (\jmath_g)_* \left( \pi^* (\alpha_g) \cup \jmath_g^*(f^* (\beta)) \right) \qquad 
\mbox{(projection formula)}\\
&=& (\jmath_g)_* \left( \pi^* (\alpha_g) \cup \pi^* (\imath_g^* (\beta)) \right) 
\qquad 
(f \circ \jmath_g = \imath_g \circ \pi) \\
&=& (\jmath_g)_* \pi^* \left( \alpha_g \cup \imath_g^* (\beta) \right) \\
&=& \Phi (\alpha_g \cup_{\rm CR} \beta) \, .
\end{eqnarray*}

\end{proof}

\section{Cohomological Crepant Resolution Conjecture
for Bianchi orbifolds}  \label{vanishing_section}

In this section, we compare the results obtained so far 
with the Cohomological Crepant Resolution Conjecture of Ruan.
We begin by briefly reviewing the statement of this conjecture,
 referring to \cite{RuanCCRC}, \cite{CoatesRuan},
and the references therein, 
for further details. 

Let $\sX$ be a complex orbifold, and let $X$ be its coarse moduli space.
We assume that $X$ is a complex projective variety which has a crepant
resolution $f\colon Y \to X$. The quantum corrected cohomology ring of 
$f\colon Y \to X$ is a ring structure on the vector space 
$\Homol^*(Y,\bC)=\oplus_{d \geq 0} \Homol^d(Y, \bC)$,
which is a deformation of the standard cohomology ring 
of $Y$. Its definition depends on the 
choice of a basis of $\ker \left( f_* \colon \Homol_2(Y,\bQ) \to \Homol_2(X,\bQ)\right)$
consisting of homology classes of effective curves $\beta_1, \ldots , \beta_n$. 
One defines the $3$-point function 
\begin{equation}\label{3pointsfunction}
(\alpha_1, \alpha_2, \alpha_3)(q_1, \ldots , q_n) 
=\sum_{(k_1, \ldots, k_n)\in \bN^n} \left\langle \alpha_1, \alpha_2, \alpha_3 
\right\rangle^Y_\beta q_1^{k_1}\cdot \ldots \cdot q_n^{k_n} \, ,
\end{equation}
where $\beta = k_1 \beta_1 + \ldots + k_n \beta_n \in \Homol_2(Y,\bZ)$,  and 
$\left\langle \alpha_1, \alpha_2, \alpha_3 
\right\rangle^Y_\beta$ is the Gromov-Witten invariant of $Y$, of genus $0$,
of homology class $\beta$, 
with respect to the cohomology classes $\alpha_1, \alpha_2, \alpha_3
\in \Homol^*(Y, \bC)$.
Recall that a compact complex curve $D\subset Y$ of homology class $\beta$ is called an 
{\it exceptional curve} for $f$.
To simplify the discussion, we assume that the $3$-point function
\eqref{3pointsfunction}
converges in a neighborhood of the origin $(q_1, \ldots, q_n)=(0,\ldots,0)$
(see \cite{CoatesRuan} for the general case);
and then, for any $(q_1, \ldots, q_n)$ in this neighborhood, we define 
a product $\star_f$ on the cohomology of $Y$ as follows:
Given cohomology classes $\alpha_1 , \alpha_2$, then
$\alpha_1 \star_f \alpha_2$ is the cohomology class which satisfies 
the following equation:
$$
(\alpha_1 \star_f \alpha_2 , \alpha_3) = 
(\alpha_1, \alpha_2, \alpha_3)(q_1, \ldots , q_n) \, ,
\quad \forall \alpha_3 \in \Homol^*(Y, \bC) \, ,
$$
where the pairing $(,)$ to the left hand side is the Poincar\'e pairing of $Y$.
The product $\star_f$ satisfies the usual properties 
of the cup product, e.g. it is associative, graded-commutative, and
$1$ is its neutral element. The family of rings 
$\left(  \Homol^*(Y, \bC), \star_f \right)$, as $(q_1, \ldots , q_n)$ varies,
is part of the (small) quantum cohomology of $Y$. 
Assigning to $q_1, \ldots , q_n$ specific values, we obtain 
the so-called 
quantum corrected cohomology ring of $f\colon Y \to X$ (\cite{RuanCCRC},
\cite{CoatesRuan}).
Notice that if  $(q_1, \ldots, q_n)=(0,\ldots,0)$, then $\star_f$  coincides with the 
usual cup product, as it  follows from the fact that  
$(\alpha_1, \alpha_2, \alpha_3)(0, \ldots , 0) = \int_Y \alpha_1\cup \alpha_2
\cup \alpha_3$. So, the quantum corrected cohomology ring of $f\colon Y \to X$
is regarded as a deformation of the usual cohomology ring of $Y$.

Ruan's Cohomological Crepant Resolution Conjecture predicts
that there is an analytic continuation of \eqref{3pointsfunction} 
to a region containing  a point $(\bar{q}_1, \ldots, \bar{q}_n)$
such that, for $({q_1}, \ldots, {q_n})=(\bar{q}_1, \ldots, \bar{q}_n)$,
the ring
$\left( \Homol^* (Y, \bC), \star_f \right)$ is isomorphic
to the Chen--Ruan orbifold cohomology ring
$\left( \Homol^*_{\rm CR}(\sX) , \cup_{\rm CR} \right)$ of $\sX$.

In the case of a Bianchi orbifold $[\Hy^3_\C / \Gamma]$,
the coarse moduli space
$\Hy^3_\C / \Gamma$ is not a projective variety \cite{ElstrodtGrunewaldMennicke},
and so, for every crepant resolution $f\colon Y \to \Hy^3_\C / \Gamma$,
$Y$ is not a projective variety.
Hence the  Gromov-Witten invariants of $Y$ are, in general, 
not well defined. However, we will see that $[\Hy^3_\C / \Gamma]$ 
has a K\"ahler structure, and that one does not expect non-zero quantum corrections
coming from exceptional curves for $f$. 
This is motivated by a deformation theoretic argument 
about the complex structure of $Y$ (let us recall that the Gromov-Witten invariants are invariant
under deformations of the complex structure).
More precisely, we conjecture that, for any homology class $\beta \in \Homol_2(Y,\mathbb{Z})$ 
of a connected exceptional curve for $f$, there is an open subset  
$\mathcal{U} \subset Y$ containing  all the connected curves 
$D\subset Y$ of homology class $\beta$,
and a deformation of the complex structure of $\mathcal{U}$ that does not contain 
any compact complex curve.

In this article we prove the latter conjecture in one special case, 
namely $\Gamma = \text{PSL}_2(\ringO_{-5})$,
while the general case should be feasible with similar arguments.
Hence, in accordance with Ruan's conjecture, there should be a ring isomorphism 
$\left( \Homol^*_{\rm CR}([\Hy^3_\C / \Gamma]) , \cup_{\rm CR} \right)
\cong \left( \Homol^* (Y), \cup \right)$.   This is confirmed 
by our Theorem~\ref{mainthm}.

\begin{proposition}
Let $[\Hy^3_\C / \Gamma]$ be a Bianchi orbifold. 
Then the Bergman metric on $\Hy^3_\C$
descends to a K\"ahler (orbifold) metric on $[\Hy^3_\C / \Gamma]$.
\end{proposition}
\begin{proof}
Let 
\begin{equation*}\label{Bergman}
\operatorname{ds}^2 = \sum g_{\alpha \bar{\beta}} 
\operatorname{d}z_\alpha \operatorname{d}\bar{z}_\beta
\end{equation*}
be the Bergman metric on $\Hy^3_\C$. By \cite[Theorem 8.4, p. 144]{MorrowKodaira},
$\operatorname{ds}^2$ is invariant under the action of $\Gamma$, hence it induces
a K\"ahler metric on the orbifold $[\Hy^3_\C / \Gamma]$.
\end{proof}

Let now $f\colon Y \to \Hy^3_\C / \Gamma$ be a crepant resolution.
Let $D\subset Y$ be an exceptional, 
 compact, complex and connected curve, that is $f_* ([D]) = 0$, where 
 $[D]$ is the fundamental class of $D$. Since 
$[\Hy^3_\C / \Gamma]$ is K\"ahler, $f( D )$ is a point,
so $D$ is contained in the exceptional divisor of $f$. 
In particular, for any homology class 
$\beta \in \ker \left( f_* \colon \Homol_2 ( Y, \bQ) \to 
\Homol_2 (\Hy^3_\C / \Gamma , \bQ) \right)$, and for any 
stable map $\mu \colon C \to Y$, such that $\mu_* ( [C]) = \beta$,
the image of $\mu$ is contained in the exceptional divisor of $f$.
Hence it suffices to consider the problem locally in a neighbourhood of the 
exceptional divisor.

From the results of Sections \ref{The conjugacy classes of finite order 
elements in the Bianchi groups} and 
\ref{Kraemer numbers and orbifold cohomology}, we see that 
the singular locus of $\Hy^3_\C / \Gamma$ is the union of 
several irreducible  components, each of which is isomorphic 
either to $\Delta = \{ z\in \C \, | \, |z|<1\}$ or to $\Delta^* = \Delta \setminus \{0\}$. Furthermore, the generic point of each  irreducible component
of the singular locus 
is a transverse singularity of type ${\rm A}_n$ of $\Hy^3_\C / \Gamma$, with $n=1$ or $2$.
%Recall that a point $p$ of a variety (an analytic space, respectively) $X$ is a transverse singularity of type ${\rm A}_n$, if
%there is a neighbourhood of $p$ in the analytic topology of $X$ that is isomorphic to a neighbourhood 
%of a singular point of  $\{ (u,v,w) \in \bC^3 \, | \, w^{n+1}=uv\} \times \bC^{d-2}$, where $d=\dim ( X )$.

Let us now consider the special case where
$\Gamma = \text{PSL}_2(\ringO_{-5})$ (see Section \ref{case -5}).
In this case we show that the quantum corrections 
to the cohomology ring of $Y$ coming from  exceptional curves  vanish. 
The singular locus of 
$X=\Hy^3_\C / \Gamma$ has two connected components,
$X_{(2)}\cong \Delta^*$, whose points are transverse singularities 
of type ${\rm A}_2$,  and $X_{(1)}$, that is the union of three irreducible components, 
$X_{(1)}', X_{(1)}'', X_{(1)}''' \cong \Delta$, that meet in two points $P, Q$
and the complement $X_{(1)} \setminus \{P,Q\}$ is a locus of transverse singularities
of type ${\rm A}_1$ (see Figure~\ref{singularities} and Section \ref{case -5}). 
The exceptional divisor of $f\colon Y \to X$ has two connected components:
$E_{(2)}$, which is mapped to $X_{(2)}$ by $f$, and $E_{(1)}$, such that 
$f(E_{(1)}) = X_{(1)}$. Furthermore, $E_{(1)}$ has three irreducible components,
$E_{(1)}', E_{(1)}'', E_{(1)}'''$, that are mapped by $f$ to $X_{(1)}', X_{(1)}'', X_{(1)}'''$,
respectively. 

Let us consider first the ${\rm A}_2$-singularities $X_{(2)}$. Notice that from the 
presentation of the Chen-Ruan cohomology (Section \ref{case -5}) it follows that 
$\Hy^3_\C/\Gamma$ has trivial monodromy on $X_{(2)}$. Hence 
there is an open neighborhood $U$ of $X_{(2)}$, such that 
$U\cong \tilde{U}/(\bZ/3\bZ)$, where $\tilde{U}$ is a complex manifold
with an action of $\bZ/3\bZ$, such that the fixed-points locus 
$\tilde{U}^{\bZ/3\bZ}$ is a smooth submanifold of $\tilde{U}$
isomorphic to $X_{(2)}$.
Furthermore, up to deformation, we can assume that $\tilde{U}$
is an open neighbourhood of the zero-section of the 
normal bundle $N_{\tilde{U}^{\bZ/3\bZ}|\tilde{U}}$
of $\tilde{U}^{\bZ/3\bZ}$ in $\tilde{U}$. 
This can be achieved using  the {\it deformation to the normal cone} of the imbedding 
$\tilde{U}^{\bZ/3\bZ}\subset \tilde{U}$ (\cite[Chapter 5]{Fulton}).
The vector bundle map $N_{\tilde{U}^{\bZ/3\bZ}|\tilde{U}} \to \tilde{U}^{\bZ/3\bZ} \cong X_{(2)}$
induces a morphism $\tilde{U}/(\bZ/3\bZ) \to X_{(2)}$ that equips $U\cong \tilde{U}/(\bZ/3\bZ)$ with the structure of a fibration over $X_{(2)}$,
with fibres all isomorphic to the surface singularity of type ${\rm A}_2$.
The important fact is that this fibration is trivial. To see this, 
let us recall that the action of $\bZ/3\bZ$ on the fibres of $N_{\tilde{U}^{\bZ/3\bZ}|\tilde{U}}$ 
induces a splitting,
$N_{\tilde{U}^{\bZ/3\bZ}|\tilde{U}} = \mathbb{L} \oplus \mathbb{M}$,
where $\mathbb{L}$ and $\mathbb{M}$ are   the eigenbundles 
corresponding to the irreducible characters of the representation of $\bZ/3\bZ$
on the fibres of $N_{\tilde{U}^{\bZ/3\bZ}|\tilde{U}}$.
In our case, $\mathbb{L}$ and $\mathbb{M}$ are trivial line bundles on $X_{(2)}$ (see \cite[Thm. 30.3, p. 229]{Forster}),
therefore the fibration $N_{\tilde{U}^{\bZ/3\bZ}|\tilde{U}}/({\bZ/3\bZ}) \to X_{(2)}$
is trivial, that is, it is isomorphic to the projection to the first factor
of $X_{(2)} \times \{(u,v,w)\in \bC^3 \, | \, uv=w^{3}\}$.
Now, using the theory of deformations of rational double points (see \cite{Brieskorn}, 
\cite{Tyurina}), we  deform 
 the family $N_{\tilde{U}^{\bZ/3\bZ}|\tilde{U}}/({\bZ/3\bZ}) \to X_{(2)}$
to a family of affine smooth surfaces.
Finally, consider the neighbourhood  $\mathcal{U} := f^{-1}(U)$ of $E_{(2)}$.
Taking a simultaneous resolution of the
previous deformation of $N_{\tilde{U}^{\bZ/3\bZ}|\tilde{U}}/({\bZ/3\bZ}) \to X_{(2)}$, 
we obtain a deformation of $\mathcal{U}$ to a manifold 
that does not contain compact complex curves. 

Let us now consider the exceptional curves that are contained in $E_{(1)}$.
Notice that each component $E_{(1)}'$, $E_{(1)}''$, $E_{(1)}'''$, can be 
seen as the exceptional divisor of a crepant resolution of a transverse
singularity of type ${\rm A}_1$. Hence, from our description of the obstruction 
bundles (Theorem~\ref{obstructionBundles}) and from \cite[Theorem 7.6]{Perroni},
it follows that the exceptional curves contained in one of these components 
do not contribute to the quantum corrected cohomology ring of $Y$.
If $D\subset Y$ is a connected exceptional curve which is contained 
in more than one component of $E_{(1)}$, then $f(D)$ coincides with $P$ or $Q$,
the points where the components $X_{(1)}', X_{(1)}'', X_{(1)}'''$ meet together. 
Near $P$ and $Q$, $X$ is isomorphic to the singularity $\bC^3/ \Kleinfourgroup$
(see Section \ref{case -5}), where $\Kleinfourgroup =
\left\langle \xi, \eta \, | \, \xi^2 = \eta^2 = (\xi \eta )^2 =1 \right\rangle \cong \bZ/2\bZ \oplus \bZ / 2\bZ$.
We can realize the quotient    $\bC^3/ \Kleinfourgroup$ as $\left( \bC^3/ \left\langle \xi \right\rangle \right)/
\left\langle \eta \right\rangle$, 
and notice that $\bC^3/ \left\langle \xi \right\rangle \cong \{ (u,v,w,z)\in \bC^3\times \bC \, | \, uv=w^2\}$
with the action of $\left\langle \eta \right\rangle$ given by $\eta \cdot (u,v,w,z)\mapsto (u,v,-w,-z)$.
The semi-universal deformation of $\{ (u,v,w)\in \bC^3 \, | \, uv=w^2\}$
is $uv=w^2 +t$, where $t$ is the deformation parameter. 
Notice that the action of $\left\langle \eta \right\rangle$ on $\bC^3/ \left\langle \xi \right\rangle$ 
extends to $\{ (u,v,w,z)\in \bC^3\times \bC \, | \, uv=w^2 +t\}$, for all $t$,
as follows: $\eta \cdot (u,v,w,z) = (u,v,-w,-z)$.
Hence $\{ (u,v,w,z)\in \bC^3\times \bC \, | \, uv=w^2 +t\}/\left\langle \eta \right\rangle$, for $t\in \bC$,
is a deformation
of $\bC^3 / \Kleinfourgroup$.
Notice that for $t\not= 0$, $\{ (u,v,w,z)\in \bC^3\times \bC \, | \, uv=w^2 +t\}/\left\langle \eta \right\rangle$
has transverse singularities of type ${\rm A}_1$, and they can be 
smoothed by a deformation as follows.  
Taking the invariants of the $\left\langle \eta \right\rangle$-action, we see that
$$
\{ (u,v,w,z)\in \bC^3\times \bC \, | \, uv=w^2 +t\}/\left\langle \eta \right\rangle \cong
\{ (u,v,\rho, \sigma, \tau )\in \bC^5 \, | \, uv=\rho + t \, , \, \rho \sigma = \tau^2\} \, ,
$$
where $\rho = w^2, \sigma = z^2, \tau = wz$. And so, 
$\{ (u,v,\rho, \sigma, \tau )\in \bC^5 \, | \, uv=\rho + t \, , \, \rho \sigma = \tau^2 +s\}$
is a deformation of $\bC^3 / \Kleinfourgroup$, with deformation parameters $t$ and $s$.
For $t\not= 0$ and $s\not= 0$, the variety 
$\{ (u,v,\rho, \sigma, \tau )\in \bC^5 \, | \, uv=\rho + t \, , \, \rho \sigma = \tau^2 +s\}$
is an affine smooth variety. Thus, a simultaneous resolution of this family 
yields a deformation of a neighbourhood of $f^{-1}( P )$ ($f^{-1}( Q )$, respectively),
such that the generic member of the family is a smooth affine variety.
Hence, it does not contain compact complex curves.

\section{Orbifold cohomology computations for sample Bianchi orbifolds} \label{Sample orbifold cohomology computations for the Bianchi groups}

\begin{wrapfigure}[6]{r}{54mm}
  \centering\scalebox{0.95}{
  \includegraphics[width=14mm]{./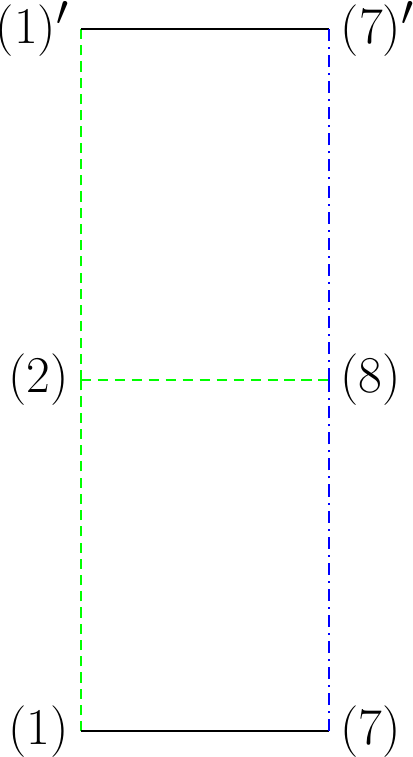}
  }
  \caption{Fundamental domain in the case \mbox{$m = 2$}.} \label{m2}
\end{wrapfigure}

We will carry out our computations in the upper-half space model
$ \left\{ x +iy +rj \in \C \oplus \R j \suchthat r > 0 \right\}$
for~$\Hy^3_\R$ in three cases.
Details on how to compute Chen--Ruan orbifold cohomology can be found in \cite{Perroni}.
In the case $\Gamma = \text{PSL}_2(\mathbb{Z}[\sqrt{-5}\thinspace])$,
we also compute the cohomology ring structure.

\subsection{The case $\Gamma = \text{PSL}_2(\mathbb{Z}[\sqrt{-2}\thinspace])$.}

Let $\omega := \sqrt{-2}\thinspace$. A fundamental domain for $\Gamma := \text{PSL}_2(\mathbb{Z}[\omega])$ in real hyperbolic 3-space~$\Hy$ has been found by Luigi Bianchi~\cite{Bianchi}.
We can obtain it by taking the geodesic convex envelope  of its lower boundary (half of which is depicted in Figure~\ref{m2}) and the vertex $\infty$,  and then removing the vertex $\infty$, making it non-compact. The other half of the lower boundary consists of one isometric $\Gamma$--image of each of the depicted 2-cells (in fact, the depicted 2-cells are a fundamental domain for a $\Gamma$--equivariant retract of $\Hy$, which is described in \cite{RahmFuchs}). The coordinates of the vertices of Figure~\ref{m2} in the upper-half space model are
$(1) = j$,
$(1)' = \omega +j$, \\
$(2) = \frac{1}{2}\omega +\sqrt{\frac{1}{2}}j$, 
$(7) = \frac{1}{2}  +\sqrt{\frac{3}{4}}j$,
$(7)' = \frac{1}{2}  +\omega +\sqrt{\frac{3}{4}}j$, 
$(8) = \frac{1}{2} +\frac{1}{2}\omega +\frac{1}{2}j$.

The 2-torsion sub-complex 
(dashed) and the 3-torsion sub-complex
(dotted) are indicated in the figure. 
The set of representatives of conjugacy classes can be chosen 
$$T = \{ \one,\thinspace \alpha,\thinspace \gamma,\thinspace \beta,\thinspace \beta^2 \},$$
with 
$\alpha = \pm $ \scriptsize $\begin{pmatrix}1 & \omega \\ \omega & -1\end{pmatrix}$, \normalsize 
$\beta = \pm $ \scriptsize $\begin{pmatrix}0 & -1\\1 & 1\end{pmatrix}$ \normalsize and
$\gamma = $ \scriptsize $\pm \begin{pmatrix}0 & 1\\-1 & 0\end{pmatrix}$, \normalsize \thinspace
so $\alpha$ and $\gamma$ are of order~2, and $\beta$ is of order~3. 
Using Lemma~\ref{numberOfConjugacyClasses} and with the help of our Bredon homology computations, we check the cardinality of~$T$.
The fixed point sets are then the following subsets of complex hyperbolic space $\Hy := \Hy^3_\C$:
\\
$\mathcal{H}^\one = \mathcal{H}$,
\\
$\mathcal{H}^\alpha = $ the complex geodesic line through $(2)$ and $(8)$,
\\
$\mathcal{H}^\gamma = $ the complex geodesic line through $(1)$ and $(2)$,
\\
$\mathcal{H}^\beta = \mathcal{H}^{\beta^2} = $ the complex geodesic line through $(7)$ and $(8)$.

The matrix $g =$ \scriptsize  $\pm \begin{pmatrix}1 & -\omega\\ 0 & 1\end{pmatrix}$ \normalsize
 performs a translation preserving the $j$-coordinate and sends the edge $(1)(7)$ onto the edge $(1)'(7)'$,
 so the orbit space $\Hy_\R /_\Gamma$ is homotopy equivalent to a circle.
Consider the real geodesic line $\mathcal{H}_\R^\gamma$ on the unit circle of real part zero.
The edge $g^{-1}\cdot \left((2)(1)'\right)$ = $\left(g^{-1}(2)\right)(1)$ lies on $\mathcal{H}_\R^\gamma$ and is not $\Gamma$--equivalent to the edge $(1)(2)$.
Because of Lemma~\ref{reflection}, the centralizer $C_\Gamma(\gamma) $ reflects the line  $\mathcal{H}_\R^\gamma$ onto itself at $(2)$, and again at $g^{-1}(2)$.
Furthermore, none of the four elements of~$\Gamma$ sending $(2)$ to $g^{-1}(2)$ belongs to $C_\Gamma(\gamma) $ .
Hence the quotient space $\mathcal{H}_\R^\gamma /_{C_\Gamma(\gamma)} $ consists of a contractible segment of two adjacent edges. 
Thus \mbox{$\Homol^{d-2} \left(\mathcal{H}_\C^\gamma  /_{C_\Gamma(\gamma)}; \thinspace \rationals \right) \cong $
\scriptsize$\begin{cases}\rationals, & d=2 \\ 0 & \mathrm{else} \end{cases}$\normalsize}
is contributed to the orbifold cohomology.

Next, consider the real geodesic line $\mathcal{H}_\R^\beta$ on the circle of constant real coordinate $\frac{1}{2}$, of center $\frac{1}{2}$ and radius $\sqrt{\frac{3}{4}}$.
The edge $g^{-1}\cdot \left((8)(7)'\right)$ = $\left(g^{-1}(8)\right)(7)$ lies on $\mathcal{H}_\R^\beta$ and is not $\Gamma$--equivalent to the edge $(7)(8)$.
The centralizer of $\beta$ contains the matrix
 $V:= \pm $ \scriptsize $\begin{pmatrix}2 1-\omega \\ \omega -1 & 1+\omega \end{pmatrix}$ \normalsize \thinspace
of infinite order, which sends  the edge $\left(g^{-1}(8)\right)(7)$ to $(8)z$ with $z = \frac{1}{2} +\frac{3}{5}\omega +\sqrt{\frac{3}{100}}j $. 
We conclude that the translation action of the group $\left\langle V \right\rangle$ on the line $\mathcal{H}_\R^\beta$ is transitive, with quotient space represented by the circle  $\left(g^{-1}(8)\right)(7) \cup (7)(8)$, first and last vertex identified.
Thus \mbox{$\Homol^{d-2} \left(\mathcal{H}_\C^\beta  / _{C_\Gamma(\beta)}; \thinspace \rationals \right) \cong
\Homol^{d-2} \left(\mathcal{H}_\C^{\beta^2}  /_{C_\Gamma(\beta^2)}; \thinspace \rationals \right) \cong$
\scriptsize$\begin{cases}\rationals, & d=2,3 \\ 0 & \mathrm{else} \end{cases}$\normalsize}
is contributed to the orbifold cohomology.

Because of Lemma~\ref{reflection}, the centralizer $C_\Gamma(\alpha) $ reflects the line  $\mathcal{H}_\R^\alpha$ onto itself at $(2)$, and again at $(8)$.
So, the quotient space $\mathcal{H}_\R^\alpha /_{C_\Gamma(\alpha)}$ is represented by the single contractible edge $(2)(8)$.
This yields that \mbox{$\Homol^{d-2} \left(\mathcal{H}_\C^\alpha  /_{C_\Gamma(\alpha)}; \thinspace \rationals \right) \cong$
\scriptsize$ \begin{cases}\rationals, & d=2 \\ 0 & \mathrm{else} \end{cases}$\normalsize}
is contributed to the orbifold cohomology.

Summing up over $T$, we obtain
$$ \Homol^d_{\rm CR}\left([\Hy^3_\C / \text{PSL}_2(\mathbb{Z}[\sqrt{-2}\thinspace])] \right) \cong 
\Homol^d\left(\Hy_\C/_{\text{PSL}_2(\mathbb{Z}[\sqrt{-2}\thinspace])}; \thinspace \rationals \right) \oplus
\begin{cases} \rationals^4, & d=2 ,\\ 
\rationals^2, & d=3, \\
 0, & \mathrm{otherwise}. \end{cases}$$

\subsection{The case $\Gamma = \text{PSL}_2(\ringO_{-5})$.}\label{case -5}
 
We start by analysing the case where  
$$
\Gamma = {\rm PSL}_2 (\Oh_{-5}) \, .
$$
In this case the singular locus of $X$ has two connected components.
One component is a transverse  singularity of type $A_2$ (we write t$A_2$).
The other component, drawn in Figure~\ref{singularities},
contains two singular points $P, Q$ which are analytically
isomorphic to the singularity at the origin of $\CC^3/\Kleinfourgroup$, where 
$$
\Kleinfourgroup = \left\langle \xi, \eta \, | \, \xi^2 = \eta^2 = (\xi \eta )^2 =1 \right\rangle \cong \ZZ/2 \oplus \ZZ/2
$$ 
is the Klein-four-group acting via the standard diagonal representation 
$\Kleinfourgroup \ra {\rm SL}_3(\CC)$:
$$
\xi \mapsto {\rm diag}(-1,-1,1) \, , \quad \,  \eta \mapsto {\rm diag}(-1,1,-1) \, . 
$$
The points $P, Q$ are joined by three curves of transverse singularities
of type $A_1$ (t$A_1$), which correspond in a neighbourhood of $P$
(resp. $Q$) to the image in $\CC^3/\Kleinfourgroup$ of the coordinate axes of $\CC^3$.

\begin{figure}
\begin{center}
 \scalebox{0.7}
{
\setlength{\unitlength}{1cm}
\begin{picture}(6,3)
\thicklines
\qbezier(3,-3)(7.5,-0.5)(3,2)
\qbezier(3.5,-3)(-1.5,-0.5)(3.5,2)
\put(3.22,-2.85){\circle*{0,2}}
\put(3.22,1.83){\circle*{0,2}}
\put(3.22,-3.2){\line(0,1){5.5}}
\put(3.26,2.2){\makebox{$P$}}
\put(3.3,-3.5){\makebox{$Q$}}
\put(1.2,-0.7){\makebox{t$A_1$}}
\put(3.4,-0.7){\makebox{t$A_1$}}
\put(5.5,-0.7){\makebox{t$A_1$}}
\end{picture}
}

\vspace{2.5cm}

 \caption{Two singular points $P, Q$ which are analytically
isomorphic to the singularity at the origin of $\CC^3/\Kleinfourgroup$.}
\label{singularities}
\end{center}
\end{figure}

From Corollary~\ref{introduced result}, we get the following presentation of 
the Chen--Ruan cohomology: 
$$
H^d_{\rm CR}([\HHH_\CC^3 /{\rm PSL}_2 (\Oh_{-5}) ] \, , \, \QQ) 
\cong 
H^d(\HHH_\CC^3 /{\rm PSL}_2 (\Oh_{-5}) , \QQ) \oplus
\begin{cases}
\QQ^2 \oplus \QQ^3 & d=2 \\
\QQ^2 \oplus \{0\} & d=3
\end{cases}
$$
where the first direct summand is the cohomology of the non-twisted sector.
The second direct summand $\left(\begin{matrix} \QQ^2 \\ \QQ^2 \end{matrix}
\right)$ 
is  the cohomology of the 
$3$-torsion twisted sector $\sX_{(3)}$ whose coarse moduli space is the connected
component of the singular locus of $X$ corresponding to the t$A_2$-singularity. 
Notice that this locus is topologically isomorphic to 
$S^1\times \RR \cong \CC^*$, $\la_6=1$ and $\la_6^* =0$,
where $\la_{2n}, \la_{2n}^*$ are as defined in Corollary~\ref{introduced result}. 
Finally, the third direct summand 
$\left(\begin{matrix} \QQ^3 \\ \{0 \} \end{matrix}
\right)$ 
is the cohomology of the $2$-torsion twisted sector $\sX_{(2)}$. 
This sector has three connected components each one 
homeomorphic to the strip $[0,1]\times \RR$ and 
corresponding to the t$A_1$-singularities joining the points
$P$ and $Q$ in Figure~\ref{singularities}. In the coarse moduli space $X$,
these components  form the configuration in Figure~\ref{singularities}.
 Here, we have $\la_4=\la_4^*=3$.

Now we study the Chen--Ruan cup product $\cup_{\rm CR}$,
verifying first that the ordinary cup product on the non-twisted sector 
$H^*(\HHH_\CC^3 /\Gamma , \QQ)$ vanishes.
From the explicit description of the quotient space
$\HHH_\RR^3 /{\rm PSL}_2 (\Oh_{-5})$ in~\cite{RahmFuchs}, 
we  get the picture of the Borel--Serre compactification of $\HHH_\RR^3 /{\rm PSL}_2 (\Oh_{-5})$
drawn in Figure~\ref{fundamental_domain}.
Here, we have expanded the singular cusp at $\frac{\sqrt{-5}+1}{2}$
to a fundamental rectangle $(s, s', s'', s''')$ for the action of the cusp stabilizer
$\Gamma_\frac{\sqrt{-5}+1}{2}$ on the plane attached by the Borel--Serre bordification.
In the same way, 
we expand the cusp at infinity to a fundamental rectangle 
$(\infty, \infty', \infty'', \infty''')$ 
for the action of the cusp stabilizer
$\Gamma_\infty$ on the plane attached there.
This is not visible in our $2$-dimensional diagram,
but is located above the rectangle $(o, o', o'', o''')$,
where $o$ is of height $1$ one above the cusp $0$.
The fundamental polyhedron for the $\Gamma$-action is then spanned by the rectangle 
$(\infty, \infty', \infty'', \infty''')$ 
and the polygons of Figure~\ref{fundamental_domain}. 
The face identifications of the fundamental polyhedron are
\begin{eqnarray}
(\infty, o, t, o',\infty') \sim (\infty''', o''', t', o'',\infty''),\\
  (\infty, o, b, u, o''',\infty''') \sim (\infty', o', b', u', o'',\infty''),\\
  (a''', s''', s'', a'', v') \sim (a, s, s', a', v),\\
  (u, a''', s''', s, a, b) \sim (u', a'', s'', s', a', b'),\\
  (o, t, v, a, b) \sim (o', t, v, a', b'),\\
  (o''', t', v', a''', u) \sim (o'', t', v', a'', u').
  \end{eqnarray}
Here, we did not respect the orientation of the $2$-cells,
but have written them in the way in which their vertices are identified.

It is well known that the Borel--Serre compactification of 
$\HHH_\RR^3 /{\rm PSL}_2 (\Oh_{-m})$ is homotopy equivalent to 
$\HHH_\RR^3 /{\rm PSL}_2 (\Oh_{-m})$ itself,
and it has been worked out in \cite{RahmNote}
how the boundary is attached in the compactification.
\\
\begin{figure}
  \centering
  \includegraphics[width=40mm]{./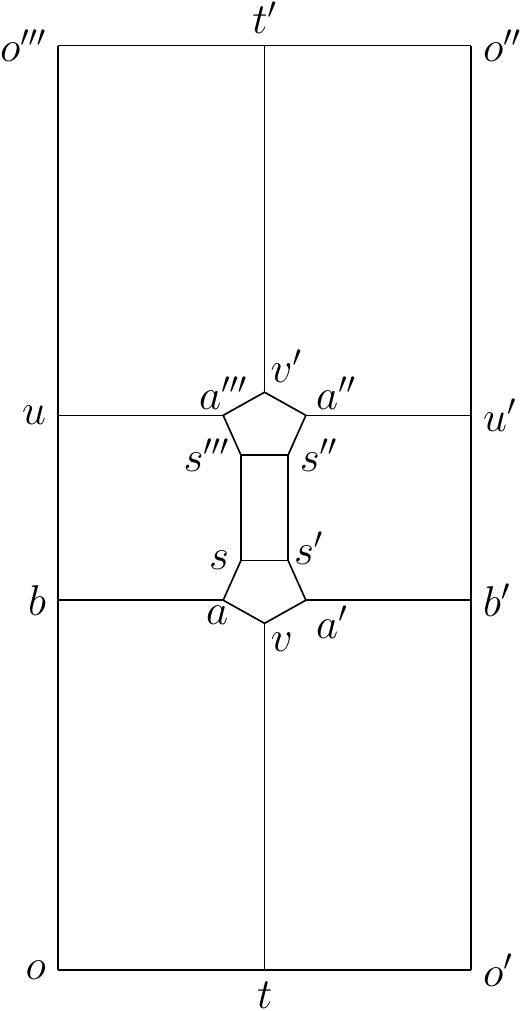}
    \caption{Fundamental domain for the Borel-Serre compactification in the case \mbox{$m = 5$}.}
    \label{fundamental_domain}
\end{figure} 
So we can describe the cohomology cocycles of $\HHH_\RR^3 /{\rm PSL}_2 (\Oh_{-5})$
in terms of the above fundamental polyhedron and face identifications.
By \cite{Goldman}[section 9.3], $\HHH_\CC^3$ admits a fundamental polyhedron $P_\CC$ for $\Gamma$ 
with the interior of its top-dimensional facets (called \emph{sides})
being open smooth submanifolds.
This yields a $\Gamma$-equivariant cell structure on $\HHH_\CC^3$.
The natural map $\HHH_\RR^3 \hookrightarrow \HHH_\CC^3 \to \HHH_\RR^3$ 
induces a map of the sides with respect to the fundamental polyhedron $P_\RR$ 
for $\Gamma$ on $\HHH_\CC^3$,
\\ ${}$ \hfill $ \text{sides}(P_\RR) \hookrightarrow \text{sides}(P_\CC) \to \text{sides}(P_\RR) ,$ \hfill  ${}$ \\
which respects the side identifications (\emph{side pairings}).
All of the side pairings of $P_\CC$ are detected this way, because they generate the group $\Gamma$ 
(see \cite{Goldman}[section 9.3]),
and so do already the side pairings of $P_\RR$.
Hence there are no additional identifications when complexifying the orbifold,
and thus there are no additional cohomology cocycles on $\HHH_\CC^3 /{\rm PSL}_2 (\Oh_{-5})$.
Generators for $H^1(\HHH_\RR^3 /{\rm PSL}_2 (\Oh_{-5}), \QQ)$
are, with reference to the above numbering of the identifications,
obtained from 
\begin{center}
$(\infty,\infty''')$ under (1) \qquad and $(s,s''')$ under (3).
\end{center}
Both $(\infty,\infty')$ under (2) and $(s,s')$ under (4)
yield trivial cocycles because of the identifications (5) and (6).

For instance using the arc method introduced in~\cite{Hatcher}[section 3.2],
we can now check explicitly that the cup product of the two cocycles obtained from 
$(\infty,\infty''')$ under (1) \qquad and $(s,s''')$ under (3) vanishes.
As further to the two $1$-dimensional cocycles, $\HHH_\RR^3 /\Gamma$ 
only admits a $0$- and a $2$-dimensional cocycle, and as there are no further identifications when complexifying, 
we arrive at the claimed vanishing of the ordinary cup product on the non-twisted sector 
$H^*(\HHH_\CC^3 /\Gamma , \QQ)$.

The cup product of two classes coming from the 
twisted sectors would be a class in dimension $\geq 4$,
where the twisted sectors vanish, 
and by the above calculation, so does the non-twisted sector.

Therefore, the Chen--Ruan cup product $\cup_{\rm CR}$
is trivial on $[\HHH_\CC^3 /{\rm PSL}_2 (\Oh_{-5})]$.

\subsection{The case $\Gamma = \text{PSL}_2(\mathbb{Z}[\sqrt{-1}\thinspace])$.}
\label{GaussExample}

 \begin{figure}
  \centering
\scalebox{0.65} 
{ \includegraphics{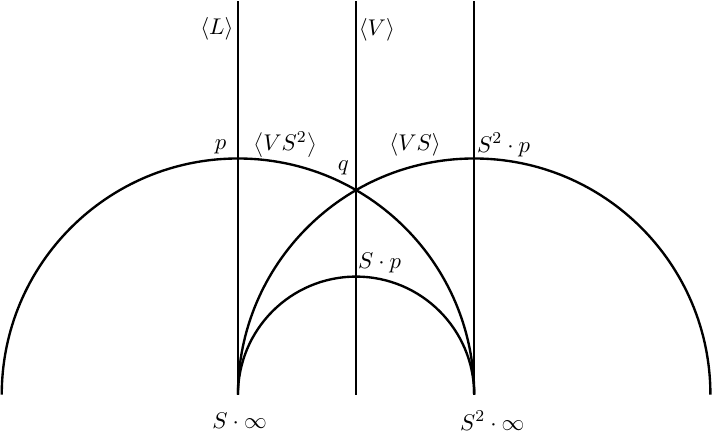}
% \begin{pspicture}(0,-1.7)(12.02,5.355)
% \psarc[linewidth=0.04](8.0,-1.335){4.0}{0.0}{180.0}
% \psarc[linewidth=0.04](4.0,-1.335){4.0}{0.0}{180.0}
% \psarc[linewidth=0.04](6.0,-1.335){2.0}{0.0}{180.0}
% \psline[linewidth=0.04cm](8.0,-1.335)(8.0,5.335)
% \psline[linewidth=0.04cm](6.0,-1.335)(6.0,5.335)
% \psline[linewidth=0.04cm](4.0,-1.335)(4.0,5.335)
% 
% \rput(4.8,2.9){$\left\langle VS^2 \right\rangle$}
% 
% \rput(7.0,2.9){$\left\langle VS \right\rangle$}
% 
% \rput(3.7059374,2.87){$p$}
% 
% \rput(5.7859373,2.5){$q$}
% 
% \rput(3.6434376,4.865){$\left\langle L \right\rangle$}
% 
% \rput(4.0284376,-1.775){$S \cdot \infty$}
% 
% \rput(8.308437,-1.775){$S^2 \cdot \infty$}
% 
% \rput(8.5,2.9){$S^2 \cdot p$}
% 
% \rput(6.4,0.9){$S \cdot p$}
% 
% \rput(6.35,4.85){$\left\langle V \right\rangle$}
% \end{pspicture} 
}
  \caption{Geodesics fixed by certain finite order elements of $\text{PSL}_2(\mathbb{Z}[\sqrt{-1}\thinspace])$.} \label{GaussscherQuerschnitt}
\end{figure}

 \begin{figure}
  \centering
 \scalebox{0.85} 
{\includegraphics{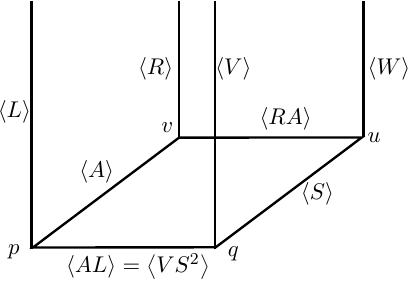}
% \begin{pspicture}(0,-1.5075)(6.5428123,5.4875)
% \psdiamond[linewidth=0.04,dimen=outer,gangle=18.5](3.206745,-0.24434589)(3.0,1.0)
% \psline[linewidth=0.04cm](0.4,-1.2)(0.4,3.0)
% \psline[linewidth=0.04cm](2.9,0.7)(2.9,3.0)
% \psline[linewidth=0.04cm](6.02,0.7)(6.02,3.0)
% \psline[linewidth=0.04cm](3.5,-1.2)(3.5,3.0)
% \rput(0.1,-1.2425){$p$}
% \rput(3.8151562,-1.2825){$q$}
% \rput(6.2,0.6775){$u$}
% \rput(6.45,1.85){$\left\langle W \right\rangle $}
% \rput(3.8151562,1.85){$\left\langle V \right\rangle $}
% \rput(2.7,0.85){$v$}
% \rput(2.5,1.85){$\left\langle R \right\rangle$}
% \rput(2.2,-1.5){$\left\langle AL \right\rangle = \left\langle VS^2 \right\rangle $}
% \rput(5.239531,-0.2625){$\left\langle S \right\rangle $}
% \rput(1.5,0.1){$\left\langle A \right\rangle $}
% \rput(4.70125,1.0){$\left\langle RA \right\rangle $}
% \rput(0.10890625,1.1175){$\left\langle L \right\rangle $}
% \end{pspicture} 
}
  \caption{Half of a fundamental domain for the action of  $\text{PSL}_2(\mathbb{Z}[\sqrt{-1}\thinspace])$ on $\Hy$,
  open towards the cusp at $\infty$.
  The second half can be obtained as a copy of the open pyramid glued from below to its base square.} \label{m1}
\end{figure}

Let $i := \sqrt{-1}\thinspace$. A fundamental domain for the action of
$\Gamma := \text{PSL}_2(\mathbb{Z}[i])$ 
on real hyperbolic 3-space~$\Hy$ has been found by Luigi Bianchi,
and the stabilizers have been computed by Fl\"oge~\cite{Floege}, 
whose notation we are going to adopt.
It is drawn in Figure~\ref{m1}.
Here the vertex stabilizers are \begin{center}
$\Gamma_p = \left\langle A, L \right\rangle \cong \Kleinfourgroup$,
$\Gamma_q = \left\langle V, S \right\rangle \cong \Sthree$,
$\Gamma_u = \left\langle W, S \right\rangle \cong \Afour$,
$\Gamma_v = \left\langle R, A \right\rangle \cong \Sthree$,                                
                                \\
                                where              
$A = $ \scriptsize $\pm \begin{pmatrix}0 & 1\\-1 & 0\end{pmatrix}$, \normalsize
$L = \pm $ \scriptsize $\begin{pmatrix}-i & 0 \\ 0 & i\end{pmatrix}$, \normalsize 
$S = \pm $ \scriptsize $\begin{pmatrix}0 & -1\\1 & 1\end{pmatrix}$, \normalsize
\\
$R = \pm $ \scriptsize $\begin{pmatrix}-i & 1 \\ 0 & i\end{pmatrix}$, \normalsize 
$V = \pm $ \scriptsize $\begin{pmatrix}-i & -i \\ 0 & i\end{pmatrix}$\normalsize  and 
$W = \pm $ \scriptsize $\begin{pmatrix}-i & 1-i \\ 0 & i\end{pmatrix}$\normalsize ;
\\ $ 1 = A^2 = L^2 = V^2 = R^2 = S^3 = W^2.$
\end{center}

The matrices mentioned in Figure~\ref{m1} (and their square when they are of order $3$)
constitute a system of representatives modulo $\Gamma$ 
of the non-trivial elements of finite order.
So we compute the respective quotients of their rotation axis by their centralizer, 
in order to obtain the CR orbifold cohomology.
For the elements of order $3$, namely $RA$ and $S$, Theorem~\ref{3-torsion quotients}
and its proof pass unchanged, so
$\Hy^{RA}/_{C_\Gamma(\left\langle RA \right\rangle )} \cong \circlegraph$ and
$\Hy^{S}/_{C_\Gamma(\left\langle S \right\rangle )} \cong \circlegraph$.

For the elements of order $2$, we study the quotient of their fixed geodesic by their centralizer through Figure~\ref{GaussscherQuerschnitt}.
Further, we obtain another such figure useful for our purpose by making the following replacements on Figure~\ref{GaussscherQuerschnitt}:
$q \mapsto v$, $S \mapsto (RA)^2$, $VS^2 \mapsto A$, $V \mapsto R$.
The symmetries obtained from combining complex conjugation with the rotation by $L$ ensure that the relabeled figure is isometric to the printed one.

The points $p$, $S \cdot p$, $S^2 \cdot p$, $(RA)^2 \cdot p$, $R \cdot p$ all have stabilizer type $\Kleinfourgroup$,
because they are on the orbit of $p$, and hence the $2$-torsion axes passing through them are mirrored by
order-$2$-elements commuting with the rotation around the respective axis.
We immediately conclude that $\Hy^L/_{C_\Gamma(\left\langle L \right\rangle )}$ is represented by the half-open interval
$[p, \infty)$. In the stabilizer of $q$, which is of type $\Sthree$, 
apart from the trivial element, only the order $3$ element and its square commute with each other.
So there are no mirrorings at $q$ in the centralizer of the rotations with axis passing through $q$.
Hence, $\Hy^V/_{C_\Gamma(\left\langle V \right\rangle )} \cong [S\cdot p, q, \infty)$
and $\Hy^{VS^2}/_{C_\Gamma(\left\langle VS^2 \right\rangle )} \cong [p, q, S^2 \cdot \infty)$.

By the above described replacements on Figure~\ref{GaussscherQuerschnitt},
we obtain analogously that \\
$\Hy^R/_{C_\Gamma(\left\langle R \right\rangle )} \cong [(RA)^2 \cdot p, v, \infty)$
and
$\Hy^A/_{C_\Gamma(\left\langle A \right\rangle )} \cong [p, v, RA \cdot \infty)$.

In the stabilizer of the point $u$, there are order-$2$-elements commuting with $W$,
and therefore \\
$\Hy^W/_{C_\Gamma(\left\langle W \right\rangle )} \cong [u, \infty)$.

Summing up, and taking into account that $\Hy/\Gamma$ is contractible,
we obtain the CR orbifold cohomology

$$ \Homol^d_{\rm CR}\left([\Hy^3_\C / \text{PSL}_2(\mathbb{Z}[\sqrt{-1}\thinspace])] \right) \cong 
\begin{cases} 
\rationals, & d = 0, \\
\rationals^{10}, & d=2 ,\\ 
\rationals^4, & d=3, \\
 0, & \mathrm{otherwise}. \end{cases}$$

\subsection{The case $\Gamma = \text{PSL}_2(\ringO_{-3})$.} 
\label{EisensteinExample}
 
 \begin{figure} \centering
 \scalebox{0.7}
{ \includegraphics{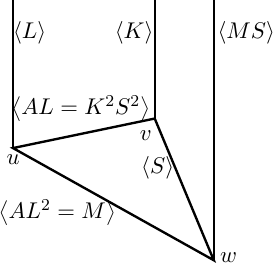}
% \begin{pspicture}(0,-2.24)(4.44,2.24)
% \pspolygon[linewidth=0.04](0.049375,-0.27)(3.449375,-2.17)(2.449375,0.23)
% \psline[linewidth=0.04cm](0.049375,-0.27)(0.049375,2.23)
% \psline[linewidth=0.04cm](3.449375,-2.17)(3.449375,2.23)
% \psline[linewidth=0.04cm](2.449375,0.23)(2.449375,2.23)
% 
% \rput(0.051875,-0.46){$  u$}
% 
% \rput(3.7,-2.12){$ w $}
% 
% \rput(2.3,-0.06){$ v $}
% 
% \rput(0.33109376,1.68){$ \left\langle L\right\rangle $}
% 
% \rput(2.1,1.68){$ \left\langle K\right\rangle $}
% 
% \rput(4.0,1.68){$ \left\langle MS\right\rangle $}
% 
% \rput(0.8,-1.36){$ \left\langle AL^2 = M\right\rangle $}
% 
% \rput(1.2,0.4){$ \left\langle AL = K^2S^2\right\rangle $}
% 
% \rput(2.5,-0.6){$ \left\langle S\right\rangle $}
% \end{pspicture} 
}
  \caption{Half of a fundamental domain for the action of  $\text{PSL}_2(\ringO_{-3})$ on $\Hy$,
  open towards the cusp at $\infty$.
  The second half can be obtained as a copy of the open pyramid glued from below to its base square.} 
  \label{m3}
\end{figure}

 \begin{figure} \centering
\scalebox{1.0}
{ \includegraphics{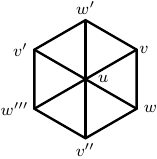}
% \begin{pspicture}(-1.1,-1.1)(1.1,1.1)
% \pspolygon[linewidth=0.04](0.866025404,0.5)(0,1)(-0.866025404,0.5)(-0.866025404,-0.5)(0,-1)(0.866025404,-0.5)
% \psline[linewidth=0.04](0,1)(0,-1)
% \psline[linewidth=0.04](0.866025404,0.5)(-0.866025404,-0.5)
% \psline[linewidth=0.04](-0.866025404,0.5)(0.866025404,-0.5)
% \rput(0.3,-0.0){\scriptsize$u$}
% 
% \rput(1.1,-0.5){\scriptsize$ w $}
% \rput(-1.2,-0.5){\scriptsize$ w''' $}
% \rput(0,1.2){\scriptsize$ w'$}
% \rput(0.99,0.5){\scriptsize$ v $}
% \rput(-1.1,0.5){\scriptsize$ v' $}
% \rput(0,-1.2){\scriptsize$ v''$}
% \normalsize
% \end{pspicture} 
}
  \caption{The $2$-cells in $\Hy$ equidistant to the cusps at $0$ and $\infty$,
  with no other $ \text{PSL}_2(\ringO_{-3})$-cusp being closer. The triangle $(u,v,w)$
  is the same one as in Figure~\ref{m3}, and the vertex $u$ sits on the middle of the geodesic $(0,\infty)$. }
\label{hexagon}
  \end{figure}
  
Let $\omega := \frac{\sqrt{-3} -1}{2}\thinspace$. 
A fundamental domain for the action of
$\Gamma := \text{PSL}_2(\mathbb{Z}[\omega])$ 
on real hyperbolic 3-space~$\Hy$ has been found by Luigi Bianchi,
and the stabilizers have been computed by Fl\"oge~\cite{Floege}, 
whose notation we are going to adopt.
It is drawn in Figure~\ref{m3}.
Here the vertex stabilizers are \begin{center}
$\Gamma_u = \left\langle A, L \right\rangle \cong \Sthree$,
$\Gamma_v = \left\langle K, S \right\rangle \cong \Afour$,      
$\Gamma_w = \left\langle M, S \right\rangle \cong \Afour$, 
                                \\
                                where              
$A = $ \scriptsize $\pm \begin{pmatrix}0 & 1\\-1 & 0\end{pmatrix}$, \normalsize
$L = \pm $ \scriptsize $\begin{pmatrix}-\omega^2 & 0 \\ 0 & \omega\end{pmatrix}$, \normalsize 
$S = \pm $ \scriptsize $\begin{pmatrix}0 & -1\\1 & 1\end{pmatrix}$, \normalsize
$K = \pm $ \scriptsize $\begin{pmatrix}\omega^2 & -\omega \\ 0 & \omega \end{pmatrix}$\normalsize ;
\\ $1 = A^2 = L^3 = K^3 = S^3 = M^2$. \normalsize 
\end{center} 

 As $\left\langle S \right\rangle \cong \Z/3$ and $\Gamma_v \cong \Afour \cong \Gamma_w$,
 the latter two vertex stabilizers do neither reflect $\Hy^S$,
 nor do they contribute any element to $C_\Gamma(\left\langle S \right\rangle)$. 
 That is why though all cusps are on one $\Gamma$-orbit, the centralizer
 $C_\Gamma(\left\langle S \right\rangle) \cong \left\langle S \right\rangle$
 leaves pointwise fixed $\Hy^S$, which is the geodesic line through $(v,w)$
 starting at a cusp $s$ in the $\Gamma_v$-orbit of $\infty$ 
 and ending at a cusp $e$ in the $\Gamma_w$-orbit of $\infty$. 
 By the $\Afour$-symmetries in $v$ and $w$,
 $(s,v)$ is mapped to $(\infty,v)$ and
 $(e,w)$ is mapped to $(\infty,w)$.
 Hence there can be no translations of $\Hy^S$ in $\Gamma$,
 and therefore $\Hy^S/_{C_\Gamma(\left\langle S \right\rangle)} = \Hy^S$.
 The $\Afour$-symmetries enforce all $3$-torsion axes passing through a representative of $v$ or $w$
 to admit the same centralizer quotient.
 Hence also $\Hy^K/_{C_\Gamma(\left\langle K \right\rangle)} = \Hy^K$
 and $\Hy^{MS}/_{C_\Gamma(\left\langle {MS} \right\rangle)} = \Hy^{MS}$
 are open geodesic lines starting and ending at cusps.
 \\
 In contrast, $\Hy^L$ is getting reflected onto itself by $\Gamma_u$. 
 But the elements of order $2$ in $\Gamma_u \cong \Sthree$ 
 do not commute with $L$, and hence 
 $\Hy^L = \Hy^L/_{C_\Gamma(\left\langle {L} \right\rangle)}$ is the geodesic line
 $(\infty, M \cdot \infty)$ with the vertex $u$ on its middle.
 
 Concerning the $2$-torsion axes, $\Hy^M$ does not get reflected by $\Gamma_u \cong \Sthree$.
 It gets reflected by order-$2$-elements in $\Gamma_w$ and $\Gamma_{v'}$
 commuting with $M$ (see Figure~\ref{hexagon});
 hence $\Hy^{M}/_{C_\Gamma(\left\langle M \right\rangle)} \cong \edgegraph$.
 By the $\Sthree$-symmetry in $u$, the same happens for $\Hy^{AL}$:
 It gets reflected in $v$ and $w'''$ by centralizing elements and not in $u$; 
 therefore 
 $\Hy^{AL}/_{C_\Gamma(\left\langle {AL} \right\rangle)} \cong \edgegraph$.
 
 Summing up, and taking into account that $\Hy/\Gamma$ is contractible,
we obtain the CR orbifold cohomology

$$ \Homol^d_{\rm CR}\left(\left[\Hy^3_\C / \text{PSL}_2\left(\mathbb{Z}\left[\frac{\sqrt{-3}-1}{2}\thinspace\right]\right)\right] \right) \cong 
\begin{cases} 
\rationals, & d = 0, \\
\rationals^{10}, & d=2 ,\\ 
 0, & \mathrm{otherwise}. \end{cases}$$

\subsection{The case $\Gamma = \text{PSL}_2(\ringO_{-11})$.}

\begin{wrapfigure}[15]{r}{54mm}
  \centering
  \includegraphics[width=20mm]{./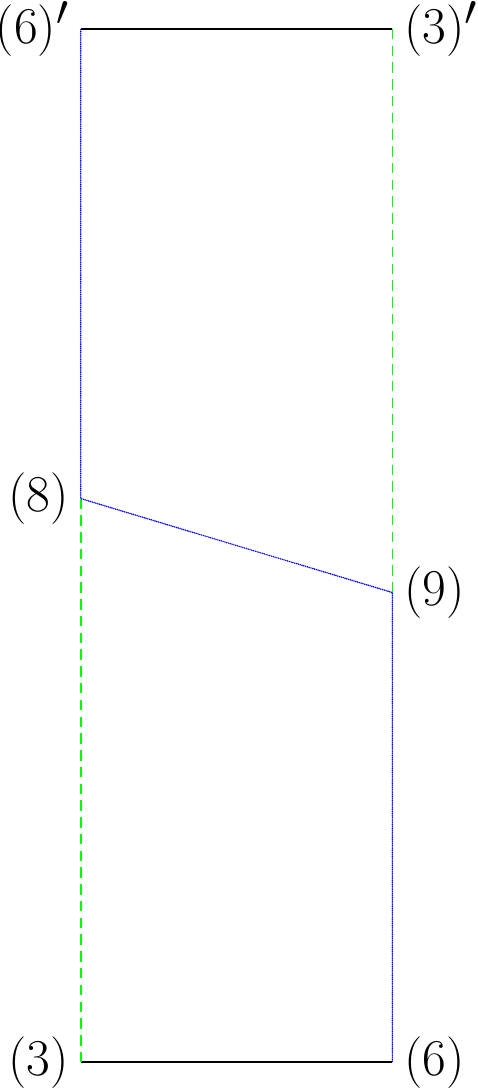}
  \caption{Fundamental domain in the case \mbox{$m = 11$}.} \label{m11}
\end{wrapfigure}

Let $\ringO_{-11}$ be the ring of integers in $\rationals(\sqrt{-11}\thinspace)$.
\\
Then $\ringO_{-11} = \Z[\omega]$ with $\omega = \frac{-1 +\sqrt{-11}}{2}$.

A fundamental domain for $\Gamma := \text{PSL}_2(\ringO_{-11})$ in real hyperbolic 3-space~$\Hy$ has been found by Luigi Bianchi~\cite{Bianchi}.
Half of its lower boundary given in Figure~\ref{m11}. The coordinates of the vertices of Figure~\ref{m11} in the upper-half space model are
$(3) = j$, $(3)' = 1 +\omega +j$, $(6) = \frac{1}{2} +\sqrt{\frac{3}{4}}j$, 
\mbox{$(6)' = \frac{1}{2} +\omega +\sqrt{\frac{3}{4}}j$,}
\mbox{$(8) = \frac{3}{11} +\frac{3}{11}\omega +\sqrt{\frac{2}{11}}j$,} 
\mbox{$(9) = \frac{8}{11} +\frac{5}{11}\omega +\sqrt{\frac{2}{11}}j$.}
The set of representatives of conjugacy classes can be chosen 
$$T = \{ \one,\thinspace \gamma,\thinspace \beta,\thinspace \beta^2 \},$$
with 
$\beta = \pm $ \scriptsize $\begin{pmatrix}0 & -1\\1 & 1\end{pmatrix}$ \normalsize and
$\gamma = $ \scriptsize $\pm \begin{pmatrix}0 & 1\\-1 & 0\end{pmatrix}$, \normalsize \thinspace
\\
so $\gamma$ is of order~2, and $\beta$ is of order~3. 
Using Lemma~\ref{numberOfConjugacyClasses} and with the help of our Bredon homology computations, we check the cardinality of~$T$.
That we have one less conjugacy class of finite order elements than in the case $\ringO_{-2}$, comes from the fact that by Remark~\ref{number of conjugacy classes in finite subgroups}, there is only one conjugacy class of order--2--elements in $\Afour$.

The fixed point sets are then the following subsets of complex hyperbolic space $\Hy := \Hy^3_\C$:
\\
$\mathcal{H}^\one = \mathcal{H}$,
\\
$\mathcal{H}^\gamma = $ the complex geodesic line through $(3)$ and $(8)$,
\\
$\mathcal{H}^\beta = \mathcal{H}^{\beta^2} = $ the complex geodesic line through $(6)$ and $(9)$.

The 2--torsion sub-complex is of homeomorphism type $\edgegraph$ and the 3--torsion sub-complex is of homeomorphism type $\circlegraph$.
Therefore, we obtain

\medskip

$ \Homol^d_{\rm CR}\left([\Hy^3_\C / \text{PSL}_2(\mathbb{Z}[\sqrt{-11}\thinspace])]\right) \cong 
\Homol^d \left(\Hy_\C /_{\text{PSL}_2(\ringO_{-11})}; \thinspace \rationals \right) \oplus
\begin{cases}
\rationals^{1 +2}, & d=2, \\
 \rationals^{2}, & d=3, \\
0, & \mathrm{otherwise}. \end{cases}$

\newpage

\subsection{The case $\Gamma = \text{PSL}_2(\ringO_{-191})$.}

\begin{wrapfigure}[35]{r}{54mm}
  \centering
  \vspace{-10pt}
  \includegraphics[width=38mm]{./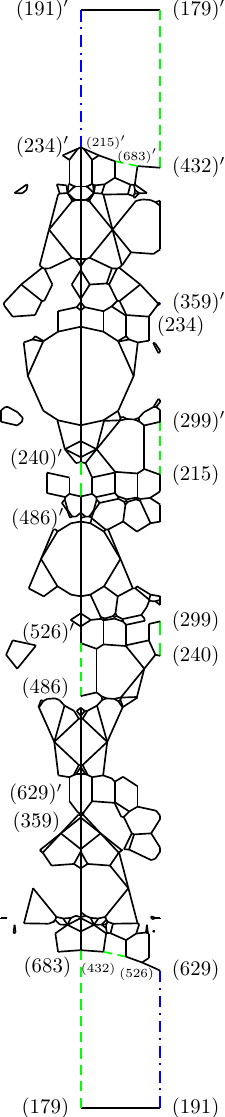}
  \caption{Fundamental domain in the case \mbox{$m = 191$}. The coordinates of the vertices can be displayed by \cite{BianchiGP}.} \label{m191}
  \vspace{-10pt}
\end{wrapfigure}

Let $\ringO_{-191}$ be the ring of integers in $\rationals(\sqrt{-191}\thinspace)$. 
Again, the set of representatives of conjugacy classes can be chosen 
$$T = \{ \one,\thinspace \gamma,\thinspace \beta,\thinspace \beta^2 \},$$
with 
$\beta = \pm $ \scriptsize $\begin{pmatrix}0 & -1\\1 & 1\end{pmatrix}$ \normalsize and
$\gamma = $ \scriptsize $\pm \begin{pmatrix}0 & 1\\-1 & 0\end{pmatrix}$, \normalsize \thinspace
so $\gamma$ is of order~2, and $\beta$ is of order~3.
Both the 2-- and the 3--torsion sub-complexes are of homeomorphism type $\circlegraph$.
Then,

\medskip

$ \Homol^d_{\rm CR}\left([\Hy^3_\C / \text{PSL}_2(\mathbb{Z}[\sqrt{-191}\thinspace])]\right)
\\ \cong 
\Homol^d\left(\Hy_\C /_{\text{PSL}_2(\ringO_{-191})}; \thinspace \rationals \right) \oplus
\begin{cases}
\rationals^{1 +2}, & d=2, \\
 \rationals^{1 +2}, & d=3, \\
0, & \mathrm{otherwise}. \end{cases}$

\bigskip

We conclude this section with the following explanation why in our fundamental domain diagrams,
 there occurs only one representative per torsion-stabilized edge.

\begin{remark}
 Let $e$ be a non-trivially stabilized edge in the fundamental domain for the refined cell complex.
Then the fundamental domain for the $2$--dimensional retract can be chosen such that it contains $e$ as the only edge on its orbit.
\end{remark}
\begin{proof}[Sketch of proof.]
Observe that the inner dihedral angle $\frac{2\pi}{q}$ of the Bianchi fundamental polyhedron is $\frac{2\pi}{\ell}$ or $\frac{\pi}{\ell}$ at its edges admitting a rotation of order $\ell$ from the Bianchi group.
We can verify this in the vertical half-plane where the action of PSL$_2(\Z)$ is embedded into the action of the Bianchi group, for the generators of orders $\ell = 2$ and $\ell = 3$ of PSL$_2(\Z)$ which fix edges orthogonal to the vertical half-plane. 
These angles are transported to all edges stabilized by Bianchi group elements conjugate under SL$_2(\C)$
to these two rotations. Poincar\'e \cite{Poincare} partitions the edges of the Bianchi fundamental polyhedron into cycles, consisting of the edges on the same orbit, of length $\frac{q}{\ell} = 1$ or $2$.
In the case of length $2$, Poincar\'e's description implies that each of the two 2--cells separated by the first edge of the cycle, is respectively on the same orbit as one of the 2--cells separated by the second edge of the cycle. 
As the fundamental domain for the $2$--dimensional retract is strict with respect to the $2$--cells, it can be chosen such that it contains $e$ as the only edge on its orbit.
\end{proof}

Note that we can check our computations using the algorithm of \cite{RahmThesis}*{Section 5.3} 
for the computation of subgroups in the centralizers.

\begin{bibdiv}
\begin{biblist}
\bib{AdemLeidaRuan}{book}{
   author={Adem, Alejandro},
   author={Leida, Johann},
   author={Ruan, Yongbin},
   title={Orbifolds and stringy topology},
   series={Cambridge Tracts in Mathematics},
   volume={171},
   publisher={Cambridge University Press},
   place={Cambridge},
   date={2007},
   pages={xii+149},
   isbn={978-0-521-87004-7},
   isbn={0-521-87004-6},
   review={\MR{2359514 (2009a:57044)}},
   doi={10.1017/CBO9780511543081},
}
\bib{Armstrong-A}{article}{
   author={Armstrong, M. A.},
   title={The fundamental group of the orbit space of a discontinuous group},
   journal={Proc. Cambridge Philos. Soc.},
   volume={64},
   date={1968},
   pages={299--301},
   review={\MR{0221488 (36 \#4540)}},
}
\bib{Armstrong-B}{article}{
   author={Armstrong, M. A.},
   title={On the fundamental group of an orbit space},
   journal={Proc. Cambridge Philos. Soc.},
   volume={61},
   date={1965},
   pages={639--646},
   review={\MR{0187244 (32 \#4697)}},
}
\bib{AurichSteinerThen}{article}{
  author = {Aurich, Ralf},
  author = {Steiner, Frank},
   author={Then, Holger},
  title = {Numerical computation of Maass waveforms and an application to cosmology},
   journal={Contribution to the Proceedings of the "International School on Mathematical Aspects of Quantum Chaos II", to appear in Lecture Notes in Physics (Springer)},
  url = {http://www.citebase.org/abstract?id=oai:arXiv.org:gr-qc/0404020},
  year = {2004}
 }

\bib{Baily-Borel}{article}{
    AUTHOR = {Baily, Jr., W. L. and Borel, A.},
     TITLE = {Compactification of arithmetic quotients of bounded symmetric
              domains},
   JOURNAL = {Ann. of Math. (2)},
  FJOURNAL = {Annals of Mathematics. Second Series},
    VOLUME = {84},
      YEAR = {1966},
     PAGES = {442--528},
      ISSN = {0003-486X},
   MRCLASS = {32.65},
  MRNUMBER = {0216035},
MRREVIEWER = {A. Kor\'anyi},
       URL = {https://doi.org/10.2307/1970457},
} 
 
\bib{Bianchi}{article}{
   author={Bianchi, Luigi},
   title={Sui gruppi di sostituzioni lineari con coefficienti appartenenti a
   corpi quadratici immaginar\^{\i}},
   language={Italian},
   journal={Math. Ann.},
   volume={40},
   date={1892},
   number={3},
   pages={332--412},
   ISSN={0025-5831},
   review={\MR{1510727}},
   review={JFM 24.0188.02}
}
\bib{BerkoveRahm}{article}{
   author={Berkove, Ethan},
   author={Rahm, Alexander D.},
   title={The mod 2 cohomology rings of ${\rm SL}_2$ of the imaginary quadratic integers},
   note={With an appendix by Aurel Page},
   journal={J. Pure Appl. Algebra},
   volume={220},
   date={2016},
   number={3},
   pages={944--975},
   issn={0022-4049},
   review={\MR{3414403}},
   doi={10.1016/j.jpaa.2015.08.002},
}
\bib{BoissiereMannPerroni}{article}{
   author={Boissi{\`e}re, Samuel},
   author={Mann, Etienne},
   author={Perroni, Fabio},
   title={The cohomological crepant resolution conjecture for ${\mathbb{P}}(1,3,4,4)$},
   journal={Internat. J. Math.},
   volume={20},
   date={2009},
   number={6},
   pages={791--801},
   issn={0129-167X},
   review={\MR{2541935}},
   doi={10.1142/S0129167X09005479},
}
\bib{BridgelandKingReid}{article}{
   author={Bridgeland, Tom},
   author={King, Alastair},
   author={Reid, Miles},
   title={The McKay correspondence as an equivalence of derived categories},
   journal={J. Amer. Math. Soc.},
   volume={14},
   date={2001},
   number={},
   pages={535--554},
   issn={},
   review={},
   doi={},
}

\bib{Brieskorn}{article} {
    AUTHOR = {Brieskorn, Egbert},
     TITLE = {\"{U}ber die {A}ufl\"osung gewisser {S}ingularit\"aten von
              holomorphen {A}bbildungen},
   JOURNAL = {Math. Ann.},
    VOLUME = {166},
      YEAR = {1966},
     PAGES = {76--102},
      ISSN = {0025-5831},
}

\bib{ChenRuan}{article}{
   author={Chen, Weimin},
   author={Ruan, Yongbin},
   title={A new cohomology theory of orbifold},
   journal={Comm. Math. Phys.},
   volume={248},
   date={2004},
   number={1},
   pages={1--31},
   issn={0010-3616},
   review={\MR{2104605 (2005j:57036)}},
   review={Zbl 1063.53091}, 
 }
\bib{Orbifold_Gromov-Witten_theory}{article}{
   author={Chen, Weimin},
   author={Ruan, Yongbin},
   title={Orbifold Gromov-Witten theory},
   conference={
      title={Orbifolds in mathematics and physics},
      address={Madison, WI},
      date={2001},
   },
   book={
      series={Contemp. Math.},
      volume={310},
      publisher={Amer. Math. Soc.},
      place={Providence, RI},
   },
   date={2002},
   pages={25--85},
   review={\MR{1950941 (2004k:53145)}},
   doi={10.1090/conm/310/05398},
}
\bib{ChenTseng}{article}{
   author={Chen, Jiun-Cheng},
   author={Tseng, Hsian-Hua},
   title={A note on derived McKay correspondence},
   journal={Mathematical Research Letters},
   volume={15},
   date={2008},
   number={3},
   pages={435--445},
   issn={},
   review={},
   review={}, 
 }
\bib{CoatesRuan}{article}{
	author={Coates, Tom},
	author={Ruan, Yongbin},
	title={Quantum cohomology and crepant resolutions: a conjecture},
	journal={Ann. Inst. Fourier (Grenoble)},
	volume={63},
	date={2013},
	number={2},
	pages={431--478},
   issn={},
   review={},
   review={}, 
}	 
\bib{ElstrodtGrunewaldMennicke}{book}{
   author={Elstrodt, J\"urgen},
   author={Grunewald, Fritz},
   author={Mennicke, Jens},
   title={Groups acting on hyperbolic space},
   series={Springer Monographs in Mathematics},
   publisher={Springer-Verlag},
   place={Berlin},
   date={1998},
   pages={xvi+524},
   ISBN={3-540-62745-6},
   review={\MR{1483315 (98g:11058)}},
   review={Zbl 0888.11001 }
}
\bib{FantechiGoettsche}{article}{
   author={Fantechi, Barbara},
   author={G{\"o}ttsche, Lothar},
   title={Orbifold cohomology for global quotients},
   journal={Duke Math. J.},
   volume={117},
   date={2003},
   number={2},
   pages={197--227},
   issn={0012-7094},
   review={\MR{1971293 (2004h:14062)}},
   review={Zbl 1086.14046},
}
\bib{Fine}{book}{
   author={Fine, Benjamin},
   title={Algebraic theory of the Bianchi groups},
   series={Monographs and Textbooks in Pure and Applied Mathematics},
   volume={\textbf{129}},
   publisher={Marcel Dekker Inc.},
   place={New York},
   date={1989},
   pages={viii+249},
   ISBN={0-8247-8192-9},
   review={\MR{1010229 (90h:20002)}},
   review={Zbl 0760.20014 }
}
\bib{Floege}{article}{
 author = {Fl\"oge, Dieter},
 title = {Zur {S}truktur der $\rm{P}{S}{L}\sb{2}$ {\"u}ber einigen imagin{\"a}r-quadratischen {Z}ahlringen},
 journal = {Math. Z.},
 volume = {183},
 year = {1983},
  number = {2}, 
  pages = {255--279},
 }

\bib{Forster}{book}{
    AUTHOR = {Forster, Otto},
     TITLE = {Lectures on {R}iemann surfaces},
    SERIES = {Graduate Texts in Mathematics},
    VOLUME = {81},
%      NOTE = {Translated from the German by Bruce Gilligan},
 PUBLISHER = {Springer-Verlag, New York-Berlin},
      YEAR = {1981},
     PAGES = {viii+254},
      ISBN = {0-387-90617-7},
   MRCLASS = {30Fxx (14-01 32-01)},
  MRNUMBER = {648106},
} 
 
\bib{Fulton}{book}{
   author={Fulton, William},
   title={Intersection Theory},
   series={},
   volume={},
   publisher={Springer-Verlag},
   place={New York},
   date={1998},
   pages={XIII, 470},
   ISBN={978-0-387-98549-7},
   review={},
   review={}
}
\bib{Goldman}{book}{
    Author = {Goldman, William M.},
    Title = {{Complex hyperbolic geometry}},
    Pages = {xx + 316},
    Year = {1999},
    Publisher = {Oxford: Clarendon Press},
   review={Zbl 0939.32024}
}
\bib{Uribe_et_al}{book}{
   author={Gonz\'alez, Ana},
   author={Lupercio, Erneste},
   author={Segovia, Carlos},   
   author={Uribe, Bernardo},   
   title={Orbifold Topological Quantum Field Theories in Dimension 2},
    date={unpublished book,\\ \url{http://matematicas.uniandes.edu.co/~buribe/myarticles/BOOK-NEARLY-FROBENIUS-30-11-12.pdf} \hfill},
}
\bib{Hatcher}{book}{
Author = {Allen {Hatcher}},
    Title = {{Algebraic topology.}},
    ISBN = {0-521-79540-0/pbk},
    Pages = {xii + 544},
    Year = {2002},
    Publisher = {Cambridge: Cambridge University Press},
    Language = {English},
    review={Zbl 1044.55001}
}
\bib{IR}{article}{
    AUTHOR = {Ito, Yukari},
    author = {Reid, Miles},
     TITLE = {The {M}c{K}ay correspondence for finite subgroups of {${\rm
              SL}(3,\bold C)$}},
 BOOKTITLE = {Higher-dimensional complex varieties ({T}rento, 1994)},
     PAGES = {221-240},
 PUBLISHER = {de Gruyter, Berlin},
      YEAR = {1996},
}
\bib{binaereFormenMathAnn9}{article}{
      author={Klein, Felix},
       title={Ueber bin\"are {F}ormen mit linearen {T}ransformationen in sich
  selbst},
        date={1875},
        ISSN={0025-5831},
     journal={Math. Ann.},
      volume={9},
      number={2},
       pages={183--8208},
         url={http://dx.doi.org/10.1007/BF01443373},
      review={\MR{1509857}},
}
\bib{KraemerThesis}{book}{
	author = {Kr\"amer, Norbert},
	school = {Math.-Naturwiss. Fakult\"{a}t der Rheinischen Friedrich-Wilhelms-Universit\"{a}t Bonn; Bonn. Math. Schr.},
	title = {Beitr\"{a}ge zur {A}rithmetik imagin\"{a}rquadratischer {Z}ahlk\"{o}rper},
	year = {1984},
}
\bib{Kraemer}{book}{
   author={Kr\"amer, Norbert},
   title={Die Konjugationsklassenzahlen der endlichen Untergruppen in der Norm-Eins-Gruppe von Maximalordnungen in Quaternionenalgebren},
   date={Bonn, 1980.},
   language={German},
}
 \bib{MaclachlanReid}{book}{
   author={Maclachlan, Colin},
   author={Reid, Alan W.},
   title={The arithmetic of hyperbolic 3-manifolds},
   series={Graduate Texts in Mathematics},
   volume={\textbf{219}},
   publisher={Springer-Verlag},
   place={New York},
   date={2003},
   pages={xiv+463},
   ISBN={0-387-98386-4},
   review={\MR{1937957 (2004i:57021)}},
   review={Zbl 1025.57001} 
}
\bib{Massey}{book}{
    AUTHOR = {Massey, William S.},
     TITLE = {A basic course in algebraic topology},
    SERIES = {Graduate Texts in Mathematics},
    VOLUME = {127},
 PUBLISHER = {Springer-Verlag, New York},
      YEAR = {1991},
     PAGES = {xvi+428},
      ISBN = {0-387-97430-X},
   MRCLASS = {55-02 (57-02)},
  MRNUMBER = {1095046},
MRREVIEWER = {Jo\v ze Vrabec},
}
\bib{MennickeGrunewald}{article}{
   author={Mennicke, J. L.},
   author={Grunewald, F. J.},
   title={Some $3$-manifolds arising from ${\rm PSL}_{2}({\bf Z}[i])$},
   journal={Arch. Math. (Basel)},
   volume={35},
   date={1980},
   number={3},
   pages={275--291},
   issn={0003-889X},
   review={\MR{583599 (82f:57009)}},
   doi={10.1007/BF01235347},
}
\bib{MislinValette}{collection}{
   author={Mislin, Guido},
   author={Valette, Alain},
   title={Proper group actions and the Baum-Connes conjecture},
   series={Advanced Courses in Mathematics. CRM Barcelona},
   publisher={Birkh\"auser Verlag},
   place={Basel},
   date={2003},
   pages={viii+131},
   isbn={3-7643-0408-1},
   review={\MR{2027168 (2005d:19007)}},
   review={Zbl 1028.46001},
}
\bib{MorrowKodaira}{book}{
   author={Morrow, James},
   author={Kodaira, Kunihiko},
   title={Complex Manifolds},
   series={AMS CHELSEA PUBLISHING},
   publisher={American Mathematical Society},
   place={USA},
   date={2006},
   pages={},
   ISBN={},
   review={},
   review={}
}

\bib{Nash}{article}{
   author={Nash, John},
   title={Real algebraic manifolds},
   journal={Ann. of Math. (2)},
   volume={56},
   date={1952},
   pages={405--421},
   issn={0003-486X},
   review={\MR{0050928 (14,403b)}},
}

\bib{Perroni}{article}{
   author={Perroni, Fabio},
   title={Chen--Ruan cohomology of $ADE$ singularities},
   journal={Internat. J. Math.},
   volume={18},
   date={2007},
   number={9},
   pages={1009--1059},
   issn={0129-167X},
   review={\MR{2360646 (2008h:14016)}},
   doi={10.1142/S0129167X07004436},
}

\bib{Poincare}{article}{
   author={Poincar{\'e}, Henri},
   title={M\'emoire},
   language={French},
   note={Les groupes klein\'eens},
   journal={Acta Math.},
   volume={3},
   date={1883},
   number={1},
   pages={49--92},
   issn={0001-5962},
   review={\MR{1554613}},
}

\bib{RahmAIF}{article}{
   author={Rahm, Alexander~D.},
   title={On the equivariant $K$-homology of $\rm PSL_2$ of the imaginary
   quadratic integers},
%    language={English, with English and French summaries},
   journal={Ann. Inst. Fourier (Grenoble)},
   volume={66},
   date={2016},
   number={4},
   pages={1667--1689},
   issn={0373-0956},
   review={\MR{3494182}},
}

\bib{RahmNote}{article}{
   author =  {Rahm, Alexander~D.} ,
   title =   {On a question of Serre},
   journal = {C. R. Math. Acad. Sci. Paris},
   volume={350},
   pages={  no. 15-16, 741--744},
   review={\MR{2981344}},
   year =    {2012}
}
\bib{Rahm_homological_torsion}{article}{
      author={Rahm, Alexander~D.},
   title={The homological torsion of $\rm{PSL}_2$ of the imaginary quadratic integers},
   journal={Trans. Amer. Math. Soc.},
   volume={365},
   date={2013},
   number={3},
   pages={1603--1635},
   issn={0002-9947},
   review={\MR{3003276}},
   doi={10.1090/S0002-9947-2012-05690-X},
}

\bib{BianchiGP}{book}{
      author={Rahm, Alexander~D.},
       title={Bianchi.gp},
date={2010},
   publisher={Open source program (GNU general public
  license), validated by the CNRS: \url{http://www.projet-plume.org/fiche/bianchigp} \quad ,
  subject to the Certificat de Comp\'etences en Calcul Intensif (C3I)
  and part of the GP scripts library of Pari/GP Development Center},
}

\bib{RahmThesis}{book}{
      author={Rahm, Alexander~D.},
       title={(Co)homologies and K -theory of Bianchi groups using computational geometric models}, 
      publisher={PhD thesis, Institut Fourier, Universit\'e de Grenoble et Universit\"at G\"ottingen, 
      soutenue le 15 octobre 2010, \url{http://tel.archives-ouvertes.fr/tel-00526976/}
      }
}

\bib{accessing_Farrell_cohomology}{article}{
      author={Rahm, Alexander~D.},
       title={Accessing the cohomology of discrete groups above their virtual cohomological dimension},
     journal={Journal of Algebra},
   volume={404},
   number={C},
   date={2014}, 
   pages={152--175},
     review = {\hfill {DOI: 10.1016/j.jalgebra.2014.01.025}}, %\url{http://hal.archives-ouvertes.fr/hal-00618167} },
}

\bib{Higher_torsion}{article}{
      author={Rahm, Alexander~D.},
title = {Higher torsion in the Abelianization of the full Bianchi groups},
journal = {LMS Journal of Computation and Mathematics},
volume = {16},
year = {2013},
issn = {1461-1570},
pages = {344--365},
doi = {10.1112/S1461157013000168},
URL = {http://journals.cambridge.org/article_S1461157013000168},
}

\bib{RahmFuchs}{article}{
      author={Rahm, Alexander~D.},
      author={Fuchs, Mathias},
       title={The integral homology of {${\rm PSL}_2$} of imaginary quadratic integers with nontrivial class group},
        date={2011},
        ISSN={0022-4049},
     journal={J. Pure Appl. Algebra},
      volume={215},
      number={6},
       pages={1443--1472},
         url={http://dx.doi.org/10.1016/j.jpaa.2010.09.005},
      review={\MR{2769243}},
}

\bib{Roan}{article}{
author = {Roan, Shi-Shyr},
title = {Minimal resolutions of Gorenstein orbifolds in dimension three},
date = {1996},
journal={Topology},
      volume={35},
      number={2},
       pages={489--508},
}

\bib{RuanCCRC}{article}{
AUTHOR = {Ruan, Yongbin},
     TITLE = {The cohomology ring of crepant resolutions of orbifolds},
 BOOKTITLE = {Gromov-{W}itten theory of spin curves and orbifolds},
    SERIES = {Contemp. Math.},
    VOLUME = {403},
     PAGES = {117--126},
 PUBLISHER = {Amer. Math. Soc., Providence, RI},
      YEAR = {2006},
       DOI = {10.1090/conm/403/07597},
       URL = {http://dx.doi.org/10.1090/conm/403/07597},
}
\bib{Scheutzow}{article}{
   author={Scheutzow, Alexander},
   title={Computing rational cohomology and Hecke eigenvalues for Bianchi
   groups},
   journal={J. Number Theory},
   volume={40},
   date={1992},
   number={3},
   pages={317--328},
   issn={0022-314X},
   review={\MR{1154042 (93b:11068)}},
   doi={10.1016/0022-314X(92)90004-9},
}
\bib{SeifertThrelfall}{book}{
   author={Seifert, Herbert},
   author={Threlfall, William},
   title={Seifert and Threlfall: a textbook of topology},
   series={Pure and Applied Mathematics},
   volume={89},
note={Translated from the German edition of 1934 by Michael A. Goldman},
   publisher={Academic Press Inc. [Harcourt Brace Jovanovich Publishers]},
   place={New York},
   date={1980},
   pages={xvi+437},
   isbn={0-12-634850-2},
   review={\MR{575168 (82b:55001)}},
}

\bib{Serre}{book}{
   author={Serre, Jean-Pierre},
   title={Linear representations of finite groups},
   series={Graduate Texts in Mathematics},
   volume={42},
   publisher={Springer Verlag},
   place={New York},
   date={1986},
   }

\bib{Swan}{article}{
   author={Swan, Richard G.},
   title={Generators and relations for certain special linear groups},
   journal={Advances in Math.},
   volume={6},
   date={1971},
   pages={1--77},
   ISSN={0001-8708},
   review={\MR{0284516} (\textbf{44} \#1741)},
   review={Zbl 0221.20060 }
}
\bib{Tognoli}{article}{
   author={Tognoli, A.},
   title={Su una congettura di Nash},
   journal={Ann. Scuola Norm. Sup. Pisa (3)},
   volume={27},
   date={1973},
   pages={167--185},
   review={\MR{0396571 (53 \#434)}},
}
\bib{Tyurina}{article}{
author={Tyurina, G.N.},
title={Resolution of singularities of plane deformations of double rational points},
journal={Funct. Anal. Appl.},
volume={4},
date={1970},
pages={68--73},
}
\bib{Vogtmann}{article}{
   author={Vogtmann, Karen},
   title={Rational homology of Bianchi groups},
   journal={Math. Ann.},
   volume={272},
   date={1985},
   number={3},
   pages={399--419},
   ISSN={0025-5831},
   review={\MR{799670 (87a:22025)}},
   review={Zbl 0545.20031 }
}
\bib{Woodruff}{book}{
   author={Woodruff, William Munger},
   title={The singular points of the fundamental domains for the groups of {B}ianchi},
   note={Thesis (Ph.D.)--The University of Arizona},
   publisher={ProQuest LLC, Ann Arbor, MI},
   date={1967},
   pages={70},
   review={\MR{2616180}},
}
\bib{Zaslow}{article}{
    author={Zaslow, Eric},
    title={Topological orbifold models and quantum cohomology rings},
    journal={Comm. Math. Phys.},
    volume={156},
    date={1993},
    number={2},
    pages={301--331},
    issn={0010-3616},
    review={\MR{1233848 (94i:32045)}},
 }
\end{biblist}
\end{bibdiv}

\end{document}